\definecolor{webgreen}{rgb}{0,.5,0}
\definecolor{webbrown}{rgb}{.8,0,0}
\definecolor{emphcolor}{rgb}{0.95,0.95,0.95}
\numberwithin{equation}{section}
\theoremstyle{plain}
\newtheorem{teor}{Theorem}[section]
\newtheorem{prop}[teor]{Proposition}
\newtheorem{lema}[teor]{Lemma}
\theoremstyle{definition}
\theoremstyle{remark}
\newtheorem{rem}{Remark}
\DeclareMathOperator{\Var}{Var}
\newcommand{\eqxo}{\mathpzc{x}_{\,1}}
\newcommand{\eqxt}{\mathpzc{x}_{\,2}}
\newcommand{\br}{\mathbf{b}}
\DeclareMathAlphabet{\mathpzc}{OT1}{pzc}{m}{it}
\DeclareMathOperator{\hol}{C}
\DeclareMathOperator{\expo}{e}
\newcommand{\E}{\mathbbm{E}}
\newcommand{\tmt}{\mathpzc{t}}
\newcommand{\tms}{\mathpzc{s}}
\newcommand{\tmf}{\mathpzc{f}}
\newcommand{\R}{\mathbbm{R}}
\newcommand{\uno}{\mathbbm{1}}
\newcommand{\der}{\mathrm{d}}
\newcommand{\Pro}{\mathbbm{P}}
\newcommand{\BE}{\begin{equation}}
	\newcommand{\A}{\mathcal{A}}
	\newcommand{\EE}{\end{equation}}
\newcommand {\BA}{\begin{align}}
	\newcommand{\EA}{\end{align}}
\newcommand{\eqdef}{\raisebox{0.4pt}{\ensuremath{:}}\hspace*{-1mm}=}
\begin{document}



\title{An Optimal Periodic Dividend and Risk Control Problem for an Insurance Company}

\author{
\name{Mark Kelbert and  Harold  A. Moreno-Franco\thanks{Corresponding author:  H. A. Moreno-Franco. Email: hmoreno@hse.ru} }
\affil{National Research University Higher School of Economics, Department of Statistics and Data Analysis,  Laboratory of Stochastic Analysis and its Applications, Moscow, Russian Federation}}

\maketitle

\begin{abstract}
We study the problem of  optimal risk policies and dividend strategies  for an insurance company operating under the constraint that the timing of shareholder payouts is governed by the arrival times of a Poisson process. Concurrently, risk control is continuously managed through proportional reinsurance. Our analysis confirms the optimality of a periodic-classical barrier strategy for maximizing the expected net present value until the first instance of bankruptcy across all admissible periodic-classical strategies.
\end{abstract}

\begin{keywords}
Proportional reinsurance; optimal periodic dividend; cheap and  non-cheap reinsurance
\end{keywords}

\section{Introduction}

In the field of actuarial risk theory, significant research has focused on optimal risk policies and/or dividend strategies, especially within the context of a diffusion approximation of the classical risk model (e.g., see \citealp{ ATT1997,  HS2004,  RS1996}). This approach has been widely adopted when both the dividend stream and the risk exposure are continuously modelled over time (e.g., see \citealp{CCTZ2006, HT1999, GLZYL2024,  LZZC2023, T2000}).

This is exemplified in the works of    \citealp{HT1999}, as well as   \citealp{T2000}, where the authors delve  into the control of risk through proportional reinsurance and the management of cumulative dividend payments via singular control involving two insurance companies. The first company, referred to as the cedent or primary insurer, enters into a contract with the second firm, known as the reinsurance company, which  must simultaneously consider its obligations  to pay out to its shareholders. In both papers, it is demonstrated that a two-barrier strategy is optimal for maximizing expected net discounted dividend payouts until the first time of bankruptcy. One barrier signifies the threshold for maximum risk-taking by the insurance company, while the other designates the level at which the excess between the surplus process and the threshold is paid out to the shareholders (after discounts) if  the surplus process exceeds this threshold. The authors explored this  problem in the context of  cheap and non-cheap reinsurance, respectively.

 Given that the assumption of paying dividends continuously over time is inconsistent with real-world observations (where dividend decisions are typically made periodically) it becomes evident that continuous-time dividend models are impractical. Consequently, it is reasonable to consider strategies where dividend payments are made at discrete times (for further details, see, e.g., \citealp{ATW2016}). Although models that incorporate discrete execution decision times are conceptually appealing, they often lack analytical tractability, which necessitates the use of numerical methods for their solutions. To address these challenges, many recent studies have investigated the problem of dividend payments under random discrete execution times (e.g., \citealp{ABT2011, ACT2011, ALW2021, NPYY2018}). 	Furthermore, the implementation of random discrete execution times has been  applied to various problems, including optimal execution with multiplicative price impact (\citealp{HMP2019}), real options (\citealp{BL2015, L2010}), and the pricing of stopping times and Bermudan look-back options (\citealp{DW2002, GL2005}).

 In this paper, we examine the problem of optimal risk policies and dividend strategies, considering that the timing of payouts to shareholders is determined by the arrival times of a Poisson process with  {rate} $\gamma > 0$. Additionally, we manage risk continuously through proportional reinsurance, as outlined in \citealp{HT1999,T2000}. To the best of our knowledge, this specific problem regarding optimal risk policies and periodic dividend strategies has not been previously explored. For example, \citet{ABT2011},  \citet{ACT2011}, \citet{ALW2021}, and \citet{NPYY2018} have focused on the optimal periodic dividend problem only in contexts where the primary insurance bears the total risk of claims.

Assuming that the classical risk model  is approximated by a Brownian motion with a drift, the primary goal of this paper is to establish that  the optimal strategy for maximizing the expected net present value (NPV) until the first occurrence of bankruptcy, among all admissible periodic-classical strategies, is characterized by periodic-classical barrier  strategies.   First, we will examine the non-cheap reinsurance case, followed by a discussion of the cheap reinsurance case as a specific instance of the former. It is also important to highlight that the scenario in which the primary insurer assumes maximum risk at any given time is a particular case of the non-cheap reinsurance. 

A notable contrast with the results obtained by  \citet{HT1999}, and \citet{T2000} is that the level of maximum risk-taking by the company, as determined by the optimal barrier, does not consistently remain lower than the level at which payouts should be made, as these levels are dependent on $\gamma$.  Moreover, for certain values of $\gamma$, the strategy of making the payout equal to the entire amount of assets that the company holds at the first arrival of the Poisson process is considered optimal.

The remainder of this document is organized as follows: in Section \ref{S2}, we consistently  formulate  the problem of optimal risk policies and periodic dividend strategies. This section introduces the HJB equation associated with  the value function as defined in \eqref{MAP_Value}; see \eqref{eq1}.  Subsequently, in Subsection \ref{S2.1}, the classical-barrier strategy is well-defined. Assuming that the solution $v$ to  \eqref{eq1} satisfies certain properties, we verify that $v$ is also a solution to the non-linear partial differential system (NLPDS) given by   \eqref{eq4.1}--\eqref{eq4.3}; see Proposition \ref{pr3}. Following this, a verification theorem is presented; see Theorem \ref{ver0}.  Moving to Section \ref{ss3}, by solving the NLPDS \eqref{eq4.1}--\eqref{eq4.3}, we obtain different explicit solutions to the HJB equation \eqref{eq1}. These explicit solutions are dependent on the various scenarios, covering cases of non-cheap reinsurance and cheap reinsurance; see Propositions \ref{T1}, \ref{T2}  and \ref{T3}. Additionally, explicit forms of the levels associated with the optimal periodic-classical barrier strategy proposed in Subsection \ref{S2.1} are provided. In conclusion, Section \ref{S5} presents numerical examples for further illustration.

\section{Formulation of the problem}\label{S2}

In order to mitigate the risk, the cedent  enters into a contract with a reinsurance firm. According to this contract, at time $\tmt$, the insurer assumes the responsibility to pay a fraction $u^{\pi}_{\tmt}$ of each claim, while the reinsurance firm takes care of the remaining payment $1-u^{\pi}_{\tmt}$ for each claim. The starting point is the classical Cram\'er-Lundberg ruin problem with reinsurance and dividend payments, i.e. the capital of the primary insurer is governed by
\begin{equation}\label{eq0}
	X^{\pi}_{\tmt}=x_{0}+\lambda a[1+\eta]\tmt-\lambda a[1-u^{\pi}_{\tmt}][1+\mu]\tmt-u^{\pi}_{\tmt}\sum_{j=1}^{N^{\lambda}_{\tmt}}Y_{j}-L^{\pi}_{\tmt},\quad \text{for}\ \tmt\geq 0,
\end{equation}
where $X_{0}= x_{0} > 0$ is the initial surplus of the insurance firm. Here $\{N^{\lambda}_{\tmt},\tmt \geq0\}$ stands for a Poisson process of rate $\lambda$, and $\{Y_{j}\}_{j\geq1}$ represents i.i.d. non-negative random variables corresponding to the sizes of individual claims, which are also independent of $N^{\lambda}$. The premiums are paid continuously at constant rates $\lambda a[1+\eta]$ and $\lambda a[1-u^{\pi}_{\tmt}][1+\mu]$ based on the expected value principle, where $u^{\pi}_{\tmt}\in[0,1]$, $a = \E[Y_{1}]$, $\bar{\sigma}^{2} =\Var[Y_{1}]$, and safety loadings $\mu \geq \eta > 0$ are involved.  The payment rate $\eta$  represents the input payment rate of the insurance firm, while   $\mu$   is the insurer's payment rate  to the reinsurance firm as per the contract mentioned earlier. Additionally, $L^{\pi}_{\tmt}$ is a non-decreasing random function that quantifies the total amount of dividends paid by the primary insurer to the shareholders and will be formally defined later.  When $\mu > \eta$, we call it non-cheap reinsurance, and in the case of $\mu = \eta$, it is usually referred as a  cheap reinsurance. 

Notice that \eqref{eq0} can be rewritten in a different form as follows 
\begin{equation*}
	X^{\pi}_{\tmt}=x_{0}+\lambda a\{\eta-[1-u^{\pi}_{\tmt}]\mu\}\tmt+u^{\pi}_{\tmt}Z_{\tmt}-L^{\pi}_{\tmt},\quad \text{for}\ \tmt\geq 0,
\end{equation*}
with $Z_{\tmt}\eqdef \lambda a\tmt-\sum_{j=1}^{N^{\lambda}_{\tmt}}Y_{j}$. Then $\E[Z_{\tmt}] = 0$, $\Var[Z_{\tmt}] = \bar{\sigma}^{2} \lambda \tmt$ and for $ \lambda \gg 1$, $a \ll 1$ and $\lambda a \approx 1$, the process $Z_{\tmt}$ is well approximated by the Brownian motion $\sigma W_{\tmt}$, with $\sigma\eqdef \bar{\sigma}\lambda^{1/2}$;  for more details, see for example \cite{I1969}. This approximation is valid under the condition that the random variables $\{Y_{j}\}_{j\geq 1}$ are stochastically small, and it serves as the intuitive background for the following model
\begin{align}\label{SDE1}
	\begin{split}
	\der X^{\pi}_{\tmt}&=\{\eta-[1-u^{\pi}_{\tmt}]\mu\}\der \tmt+\sigma u^{\pi}_{\tmt}\der W_{\tmt}-\der L^{\pi}_{\tmt},\quad\text{for}\ \tmt\geq0,\\
	X^{\pi}_{0}&=x_{0},
	\end{split}
\end{align}
which is the main object of our study.  Additionally, let us consider  that  the insurance firm can pay out to its shareholders at the arrival times $\mathcal{T}^\gamma \eqdef\{T_{i}: i\geq 1 \}$ of a Poisson process $N^{\gamma}\eqdef\{N^{\gamma}_{\tmt}:\tmt\geq 0\}$ with intensity $\gamma>0$. Then, in order that \eqref{SDE1} is well-defined, consider that  $W,\ N^{\gamma}$ are defined on a filtered  probability space $(\Omega, \mathcal{F}, \mathbbm{H}, \Pro)$, where $\mathbbm{H}=\{\mathcal{H}_{\tmt}\}_{\tmt\geq 0}$ is the right-continuous complete filtration generated by $(W,N^{\gamma})$, and the processes $W$ and $N^{\gamma}$ are independent. Then,  a dividend strategy $L^{\pi}$ has the following form
\begin{align}
	L_{\tmt}^{\pi}=\int_{[0,\tmt]}\nu^{\pi}_{\tms}\der N^{\gamma}_{\tms},\quad\text{for $\tmt\geq0$,} \label{r2}
\end{align}
for some non-negative c\`agl\`ad process $\nu^{\pi}\eqdef{\{\nu^{\pi}_{\tmt}:\tmt\geq0\}}$ adapted  to the filtration  $\mathbbm{H}$, where $\nu^{\pi}_{T_{i}}$ represents the payout made at time $T_{i}$, with $T_{i}\in\mathcal{T}^{\gamma}$.

Therefore,  under the strategy $\pi=(u^{\pi},L^{\pi})$, the surplus process $X^{\pi}\eqdef\{X^{\pi}_{\tmt}:\tmt\geq0\}$   evolves as  in \eqref{SDE1}.  Let $\mathcal{A}$ be the collection of strategies $\pi=(u^{\pi},L^{\pi})$ such that $u^{\pi},\  \nu^{\pi}$ are $\mathbbm{H}$-adapted such that $u^{\pi}\in[0,1]$ and $\nu^{\pi}$ satisfies \eqref{r2} and  $0\leq\nu^{\pi}_{\tmt}\leq X^{\pi}_{\tmt-}$. We say that  a strategy $\pi$ is admissible if $\pi\in\mathcal{A}$. 

For each $\pi\in\mathcal{A}$, the expected  NPV  is  given by
\begin{equation*}
	V^{\pi}(x) \eqdef \E_{x} \left[ \int_{0}^{\tau^{\pi}} \expo^{-\delta \tmt} \der L^{\pi}_{\tmt}\right], \quad x \geq 0. 
\end{equation*}
Here $\E_{x}$ represents the conditional expectation given an initial surplus $x$, $\delta>0$ denotes the discount rate, and $\tau^{\pi}\eqdef\inf\{\tmt>0:X^{\pi}_{\tmt}<0\}$ is the first time of bankruptcy. Then,  our goal is to find the value function $V$ for  the problem, which is
\begin{equation}\label{MAP_Value}
	V(x) \eqdef \sup_{\pi \in \A} V^{\pi}(x), \quad x \geq 0.
\end{equation}
Furthermore, we aim to find an  optimal strategy  $\pi^* \in \A$  for which the expected NPV  $V^{\pi^{*}}$ coincides with $V$, if such a strategy exists.

Using standard dynamic programming arguments, we can associate the value function $V$ with the Hamilton Jacobi Bellman (HJB) equation given by 
\begin{align}\label{eq1}
	\begin{split}
		\max_{0\leq u\leq1,\,  0\leq\xi\leq x}\big\{ \mathcal{L}^{u}v(x)+\gamma [\xi+v(x-\xi)-v(x)]\big\}&=0 \ \text{for}\ x>0,\\
		\text{s.t.} \ v(0)&=0.
	\end{split}
\end{align}
with
\begin{equation}\label{O1}
	\mathcal{L}^{u}v(x)\eqdef \frac{1}{2}[\sigma u]^2 v''(x)+[\eta-[1-u]\mu]  v'(x)  -\delta v(x).
\end{equation}

\begin{rem}
	It is noteworthy that optimal risk policies and periodic dividend strategies  with debt liabilities can be reformulated as a non-cheap reinsurance problem. This is achieved by a simply reparametrization in \eqref{SDE1} and \eqref{O1}. Essentially, this implies that the results obtained in this paper are applicable to this scenario as well, provided that we replace $\eta$ with $\mu-\hat{\delta}$, where $\hat{\delta}$ represents the debt repayment.
	
\end{rem}

\subsection{Choosing an optimal strategy  and verification theorem}\label{S2.1}

To find an optimal solution  to the problem \eqref{MAP_Value}, we focus our study  on the  type of periodic-classical barrier strategies.  For  each $\br=(b_{1},b_{2})\in[0,\infty)^{2}$ fixed, we define a periodic-classical barrier strategy $\pi^{\br}=(u^{\br},L^{\br})\in\mathcal{A}$  as
\begin{align*}
	\begin{split}
		&u^{\br}_{\tmt}\eqdef \
		\begin{cases}
			u_{\tmt}&\text{if}\ X^{\br}_{\tmt}\in(0,b_{1}),\\
			1 &\text{if}\ X^{\br}_{\tmt}\in[b_{1},\infty),
		\end{cases}		\quad\text{and}\quad L^{\br}_{\tmt}= \int_{[0,\tmt]}\nu^{\br}_{\tms}\der N^{\gamma}_{\tms}\ \text{where}\ \nu^{\br}_{\tmt}=\max\{X^{\br}_{\tmt}-b_{2},0\},
	\end{split}
\end{align*}
where $u\in[0,1)$ is an $\mathbbm{H}$-adapted process  and $X^{\br}\eqdef X^{\pi^{\br}}$ is the  solution to the SDE  \eqref{SDE1}.

To choose our candidate periodic-classical barrier strategy $\pi^{\br^{\gamma}}=\big(u^{\br^{\gamma}},L^{\br^{\gamma}}\big)$ to be optimal, with $\br^{\gamma}=(b^{\gamma}_{1},b_{2}^{\gamma})$,  we must first address the solution to \eqref{eq1}. Solving this equation will allow us to precisely   define the thresholds $ b^{\gamma}_{1}$ and $b^{\gamma}_{2}$. 

 {Taking for each  $x\geq0$ fixed,
	\begin{align*}
		\Xi_{x}(u,\xi)\eqdef\mathcal{L}^{u}v(x)+\gamma [\xi+v(x-\xi)-v(x)]\ \text{for $(u,\xi)\in[0,1]\times[0, x]$,}
	\end{align*}	
	 and defining
	\begin{equation}\label{eq2.1}
	b^{\gamma}_{1}=\inf\bigg\{x>0: -\frac{\mu}{\sigma^{2}}\tmf(x;v)\geq1\bigg\},\quad b^{\gamma}_{2}=\inf\{x>0: v'(x)<1\},
	\end{equation}
	with $\tmf(x;v)\eqdef\frac{v'(x)}{v''(x)}$ and $\inf\emptyset=\infty$, let us first establish the following result.	
	\begin{lema}\label{lmax1}
		If $v\in\hol^{2}(0,\infty)$ is a concave and increasing function satisfying that $\tmf(\cdot;v)$ is decreasing on $(0,\infty)$ and $b^{\gamma}_{1},b^{\gamma}_{2}\in[0,\infty)$, then 
			\begin{align}\label{eq2}
			&\max_{0\leq u\leq 1}\{\Xi_{x}(u,\xi)\}\ \quad\text{and}\quad \max_{0\leq\xi\leq x}\{\Xi_{x}(u,\xi)\},
			\end{align}
			are  {achieved} at
			\begin{align}\label{lmax2}
				u^{*}(x)&=
				\begin{cases}
					-\frac{\mu }{\sigma^{2} }\tmf(x;v),&\text{if}\ x\in(0,b^{\gamma}_{1}),\\
					1&\text{if}\ x\in[b^{\gamma}_{1},\infty),
				\end{cases}\quad\text{and}\quad
				\xi^{*}(x)=
			\begin{cases}
				0&\text{if}\ x \in(0,b^{\gamma}_{2}),\\
				x-b^{\gamma}_{2}&\text{if}\ x\in[b^{\gamma}_{2},\infty),
			\end{cases}
				\end{align}
				respectively.
		\end{lema}}
	\begin{proof}
		 
		 By taking the first derivatives with respect to $u$ in $\Xi_{x}$, we find that a critical point occurs at $\bar{u} = -\frac{\mu}{\sigma^{2}}\tmf(x;v)$. Since $x \mapsto -\frac{\mu}{\sigma^{2}}\tmf(x;v)$ is positive and increasing on $(0,\infty)$, the condition $-\frac{\mu }{\sigma^{2}}\tmf(b^{\gamma}_{1};v) = 1$ holds if $-\frac{\mu}{\sigma^{2}}\tmf(0+;v) < 1$. Under this condition, $\bar{u} \in (0,1)$ only if $x \in (0,b^{\gamma}_{1})$; thus, $ \max_{0 \leq u \leq 1} \{\Xi_{x}(u,\xi)\}$ is attained at $u^{*}(x) = \bar{u}$ for $x \in (0,b^{\gamma}_{1})$. 
		 
		 For $x \geq b^{\gamma}_{1}$, $\frac{\partial}{\partial u}\Xi_{x}(u,\xi) > 0$ for $u \in (0,1)$ and $\xi \in (0,x)$, indicating that $ \max_{0 \leq u \leq 1} \{\Xi_{x}(u,\xi)\}$ is attained at $u^{*}(x) = 1$. If $-\frac{\mu}{\sigma^{2}}\tmf(0+;v) \geq 1$, then $b^{\gamma}_{1} = 0$ and the outcome remains that $ \max_{0 \leq u \leq 1} \{\Xi_{x}(u,\xi)\}$ is achieved at $u^{*}(x) = 1$ for $x > 0$.
		 
		 Next, we take first derivatives with respect to $\xi$ in $\Xi_{x}$. A critical point is $\bar{\xi} = x - b_{2}$ if $v'(b_{2}) = 1$. This holds when $v'(0+) > 1$, due to the decreasing nature of $v'$ on $(0,\infty)$. If $x \in (0,b^{\gamma}_{2})$, then $v'(x-\xi) > v'(b^{\gamma}_2) = 1$ for $\xi \in (0,x)$, leading to $\frac{\partial}{\partial \xi}\Xi_{x}(u,\xi) < 0$ for $\xi \in [0,x]$, hence $ \max_{0 \leq \xi \leq x} \{\Xi_{x}(u,\xi)\}$ is attained at $\xi^{*}(x) = 0$.
		 
		 For $x > b_{2}$, we observe $1 > v'(x-\xi)$ for $\xi \in (0,\bar{\xi})$, $1 = v'(x-\xi)$ at $\xi = \bar{\xi}$, and $1 < v'(x-\xi)$ for $\xi \in (\bar{\xi},x)$. Thus, $ \max_{0 \leq \xi \leq x} \{\Xi_{x}(u,\xi)\}$ is attained at $\xi^{*}(x) = \bar{\xi} = x - b^{\gamma}_{2}$. If $v'(0+) \leq 1$, then $b^{\gamma}_{2} = 0$ and $\frac{\partial}{\partial \xi}\Xi_{x}(u,\xi) > 0$ for $\xi \in [0,x]$, resulting in $ \max_{0 \leq \xi \leq x} \{\Xi_{x}(u,\xi)\}$ being attained at $\xi^{*}(x) = x$ for $x > 0$.
		 	\end{proof}
	 {
	\begin{rem}\label{sol1}
	If $v$ is a solution to the HJB equation \eqref{eq1} and satisfies the conditions outlined in Lemma \ref{lmax1},  it follows directly from this lemma that $v$ is also a solution to   the following NLPDS  
	\begin{align}\label{eq4.1}
		-\frac{\mu^{2}[ v'(x)]^{2}}{2\sigma^{2} v''(x)} +[\eta-\mu]v'(x)-\delta v(x)\hspace{3cm}&\notag\\
		+\gamma\{x-b_{2}+v(b_{2})-v(x)\}\uno_{\{x-b_{2}>0\}}&=0 \quad\text{for}\ x\in(0,b_{1}),\\
		\frac{1}{2}\sigma^{2} v''(x) +\eta v'(x) -\delta v(x)\hspace{3cm}&\notag\\
		+\gamma\{ x-b_{2} +v(b_{2})-v(x)\}\uno_{\{x-b_{2}>0\}}&=0 \quad\text{for}\ x\in(b_{1},\infty),\label{eq4.2}\\
		\text{s.t.} \ v(0)&=0,\label{eq4.3}
	\end{align}
	when $b_{1}=b^{\gamma}_{1}$ and $b_{2}=b^{\gamma}_{2}$. At this stage, we observe that $b^{\gamma}_{1}$ and $b^{\gamma}_{2}$, defined as in \eqref{eq2.1}, are the thresholds that help  define our candidate strategy  $\pi^{\br^{\gamma}}$.  This strategy  will be defined in the subsection below, where  we also present a verification theorem confirming that $\pi^{\br^{\gamma}}$ is indeed an optimal strategy for the problem mentioned in \eqref{MAP_Value}. With this in  hand, we only  need to provide a solution to the NLPDS \eqref{eq4.1}--\eqref{eq4.3} that satisfies the hypotheses outlined in Lemma \ref{lmax1}.  
	\end{rem}}

\subsubsection{ Verification Theorem} 

Consider $v$ as a solution to the HJB equation \eqref{eq1}, satisfying the hypotheses in Lemma \ref{lmax1}. Then,  {the SDE in \eqref{SDE1}}
 {with the strategies}
\begin{align}\label{opt1}
	\begin{split}
		&u^{\br^{\gamma}}_{\tmt}\eqdef \
		\begin{cases}
			-\frac{\mu }{\sigma^{2}}\tmf(X^{\br^{\gamma}}_{\tmt};v)&\text{if}\ X^{\br^{\gamma}}_{\tmt}\in(0,b^{\gamma}_{1}),\\
			1 &\text{if}\ X^{\br^{\gamma}}_{\tmt}\in(b^{\gamma}_{1},\infty),
		\end{cases}		\\  
		&L^{\br^{\gamma}}_{\tmt}= \int_{[0,\tmt]}\nu^{\br^{\gamma}}_{\tms}\der N^{\gamma}_{\tms}\ \text{where}\ \nu^{\br^{\gamma}}_{\tmt}=\max\{X^{\br^{\gamma}}_{\tmt}-b^{\gamma}_{2},0\},
	\end{split}
\end{align}
admits a unique  solution $X^{\br^{\gamma}}=\{X^{\br^{\gamma}}_{\tmt}:\tmt\geq0\}$; see  \cite{KK2014}. Thus, the candidate strategy to be  optimal is given by the barrier strategy $\pi^{\br^{\gamma}}=(u^{\br^{\gamma}},L^{\br^{\gamma}})$ where $u^{\br^{\gamma}}$ and $L^{\br^{\gamma}}$ are as in \eqref{opt1}. We will verify below  that its expected NPV 
\begin{equation*}
	V^{\br^{\gamma}}(x)\eqdef V^{\pi^{\br^{\gamma}}} (x)=\E_{x} \left[ \int_{0}^{\tau^{\br^{\gamma}}} \expo^{-\delta \tmt} \der L^{\br^{\gamma}}_{\tmt}\right],\quad\text{for}\ \tmt\geq0,
\end{equation*}
with $\tau^{\br^{\gamma}}\eqdef\inf\{\tmt>0:X^{\br^{\gamma}}_{\tmt}<0\}$, satisfies the HJB equation \eqref{eq1}, proving  that $V^{\br^{\gamma}}=V$. From here, it follows that the strategy  $\pi^{\br^{\gamma}}$ is indeed an optimal strategy.

\begin{teor}[Verification theorem]\label{ver0}
	Let $v$ be a solution to the HJB equation \eqref{eq1}, satisfying the hypotheses in Lemma \ref{lmax1}. Then,  $v$ agrees  with the value function $V$ defined in \eqref{MAP_Value}. Furthermore, the strategy $\pi^{\br^{\gamma}}=(u^{\br^{\gamma}},L^{\br^{\gamma}})$ given by \eqref{opt1} is an optimal strategy and $V^{\br^{\gamma}}=v=V$. 
\end{teor}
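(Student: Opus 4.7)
The plan is to prove a two-sided inequality: $v(x)\geq V^{\pi}(x)$ for every $\pi\in\mathcal{A}$ and, separately, $v(x)=V^{b_\gamma}(x)$. Combined with the obvious $V^{b_\gamma}\leq V\leq \sup_{\pi}V^{\pi}$, this gives $v=V=V^{b_\gamma}$ and establishes the optimality of $\pi^{b_\gamma}$. The main tool will be It\^o's formula for semimartingales applied to the discounted process $\expo^{-\delta \tmt} v(X^{\pi}_{\tmt})$.

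For the upper bound, I fix $\pi=(u^{\pi},L^{\pi})\in\mathcal{A}$ together with a localizing sequence $\tau_{n}\uparrow\tau^{\pi}$, and I separate the continuous evolution from \eqref{SDE1} from the pure jumps of $L^{\pi}$ at the arrival times of $N^{\gamma}$. Compensating the jump contribution by the intensity $\gamma$ and rewriting $\int \expo^{-\delta\tms}\der L^{\pi}_{\tms}$ through its compensator, I arrive at the identity
\begin{align*}
\expo^{-\delta(t\wedge\tau_{n})}v(X^{\pi}_{t\wedge\tau_{n}}) + \int_{0}^{t\wedge\tau_{n}}\expo^{-\delta\tms}\der L^{\pi}_{\tms} &= v(x)+M^{n}_{t}\\
&\quad+\int_{0}^{t\wedge\tau_{n}}\expo^{-\delta\tms}\Big\{\mathcal{L}^{u^{\pi}_{\tms}}v(X^{\pi}_{\tms})+\gamma\big[\nu^{\pi}_{\tms}+v(X^{\pi}_{\tms-}-\nu^{\pi}_{\tms})-v(X^{\pi}_{\tms-})\big]\Big\}\der\tms,
\end{align*}
where $M^{n}$ collects the Brownian stochastic integral and the compensated Poisson martingale. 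Because $v$ solves \eqref{eq1} and the admissible pair $(u^{\pi}_{\tms},\nu^{\pi}_{\tms})$ lies in $[0,1]\times[0,X^{\pi}_{\tms-}]$, the braced integrand is pointwise non-positive; taking expectations (annihilating $M^{n}$ once it is identified as a true martingale on the stopped interval), using $v\geq 0$ for the boundary term, and passing to the limit via monotone convergence on the dividend integral yields $v(x)\geq V^{\pi}(x)$.

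For the attainment, the strategy $\pi^{b_\gamma}$ was constructed in \eqref{opt1} precisely so that $(u^{b_\gamma}_{\tmt},\nu^{b_\gamma}_{\tmt})$ realizes both the pointwise $u$-maximum \eqref{eq4} and the $\xi$-maximum in \eqref{eq2}; hence the braced integrand vanishes $\der\tmt\otimes\der\Pro$-a.s.\ along the trajectory of $X^{b_\gamma}$, so the It\^o identity becomes an equality. Passing to the limit as $t\to\infty$ and showing that the boundary term $\E_{x}\!\left[\expo^{-\delta(t\wedge\tau^{b_\gamma})}v(X^{b_\gamma}_{t\wedge\tau^{b_\gamma}})\right]$ vanishes then delivers $V^{b_\gamma}(x)=v(x)$. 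The main technical hurdle is precisely this double limit: verifying that $M^{n}$ is a true martingale requires controlling $\E_{x}\!\big[\int_{0}^{t\wedge\tau_{n}}[\sigma u^{\pi}_{\tms}v'(X^{\pi}_{\tms})]^{2}\der\tms\big]$, which follows from $u^{\pi}\leq 1$ and the local boundedness of $v'$ on $(0,\infty)$ together with standard moment estimates on $X^{\pi}$ driven by \eqref{SDE1}; while the vanishing of the boundary term exploits the at-most-linear growth of $v$ (guaranteed by concavity together with $v'\leq 1$ on $[b_\gamma,\infty)$, itself a consequence of \eqref{eq2.1}) combined with the exponential discount factor.
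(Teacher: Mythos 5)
Your proposal is correct and follows essentially the same route as the paper: Itô's formula with a localizing sequence, compensating the Poisson jump term by its intensity, the HJB inequality to make the drift integrand non-positive for arbitrary admissible $\pi$, and equality of that integrand along $\pi^{b_{\gamma}}$, with the martingale property of the stochastic integrals and the vanishing of the discounted boundary term handled via the boundedness of $v'$ on the stopped interval and the linear growth of $v$ from concavity. The only (harmless) deviation is that for the upper bound you drop the non-negative boundary term and use monotone convergence on the dividend integral, whereas the paper shows that term tends to zero and invokes dominated convergence.
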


\begin{proof}
	Let $\pi=(u^{\pi},L^{\pi})$ be in $\mathcal{A}$, where $X^{\pi}$ evolves as in \eqref{SDE1}, with $X^{\pi}_{0}=x$.
	For each $n\in\mathbb{N}$ such that  $v'(n)<1$, take $\tau_{n}$ in the following manner 
	\begin{align}\label{st1}
	\tau_{n}\eqdef\inf\{t>0:X^{\pi}_{\tmt}>n\ \text{or}\ X^{\pi}_{\tmt}<1/n\}. 
	\end{align}
	Using  integration by parts and It\^o's formula in $\expo^{-\delta[\tmt\wedge\tau_{n}]}v(X^{\pi}_{\tmt\wedge\tau_{n}})$ (since $X^{\pi}$ is a semi-martingale and $v$ is a $\hol^{2}$-continuous function); see \cite[Ch. II, Theorem 33]{Pro}, and taking into account \eqref{SDE1}, it gives
	\begin{align}\label{ver3}
		\expo^{-\delta[\tmt\wedge\tau_{n}]}v(X^{\pi}_{\tmt\wedge\tau_{n}})-v(x)&=\int_{0}^{\tmt\wedge\tau_{n}}\expo^{-\delta\tms}\Big\{\mathcal{L}^{u^{\pi}_{\tms}} v(X^{\pi}_{\tms})+\gamma[\nu^{\pi}_{\tms}+v(X^{\pi}_{\tms-}-\nu^{\pi}_{\tms})-v(X^{\pi}_{\tms-})]\Big\}\der \tms\notag\\
		&\quad-\int_{[0,\tmt\wedge\tau_{n}]}\expo^{-\delta\tms}\nu^{\pi}_{\tms}\der N^{\gamma}_{\tms}+M_{\tmt\wedge\tau_{n}}+H_{\tmt\wedge\tau_{n}}.
	\end{align}
	where $M_{\tmt\wedge\tau_{n}}\eqdef\sigma\int_{0}^{\tmt\wedge\tau_{n}}\expo^{-\delta\tms}u^{\pi}_{\tms}v'(X^{\pi}_{\tms})\der W_{\tms}$, $H_{\tmt\wedge\tau_{n}}\eqdef\int_{[0,\tmt\wedge\tau_{n}]}\expo^{-\delta\tms}[\nu^{\pi}_{\tms}+v(X^{\pi}_{\tms-}-\nu^{\pi}_{\tms})-v(X^{\pi}_{\tms-})]\der \widetilde{N}^{\gamma}_{\tms}$ and $\widetilde{N}^{\gamma}_{\tms}\eqdef N^{\gamma}_{\tms}-\gamma\tms$ is the compensated Poisson process.  Notice that  \eqref{eq1} implies that 
	\begin{align}\label{ver4}
		\expo^{-\delta[\tmt\wedge\tau_{n}]}v(X^{\pi}_{\tmt\wedge\tau_{n}})-v(x)&\leq-\int_{[0,\tmt\wedge\tau_{n}]}\expo^{-\delta\tms}\nu^{\pi}_{\tms}\der N^{\gamma}_{\tms}+M_{\tmt\wedge\tau_{n}}+H_{\tmt\wedge\tau_{n}}.
	\end{align}
	Since $v'\in(0,v'(0+))$ is decreasing on $(0,\infty)$, observe that $\expo^{-\delta\tms}u^{\pi}_{\tms}v'(X^{\pi}_{\tms})\leq  v'(1/n)<\infty$ for $\tms\in(0,\tmt\wedge\tau_{n})$, due to \eqref{st1}. Meanwhile, since $v$ is concave, it follows that $\expo^{-\delta\tms}[\nu^{\pi}_{\tms}+v(X^{\pi}_{\tms-}-\nu^{\pi}_{\tms})-v(X^{\pi}_{\tms-})]\leq X^{\pi}_{\tms-}[1-v'(n)]<n[1-v'(n)]<\infty$ for $\tms\in[0,\tmt\wedge\tau_{n}]$, due to \eqref{st1}. Thus, it follows that the processes $\{M_{t\wedge\tau_{n}}: t\geq0\}$ and $\{H_{t\wedge\tau_{n}}: t\geq0\}$ are zero-mean $\Pro_{x}$-martingales.
	Taking expectation in \eqref{ver4}  {and considering that $\expo^{-\delta[\tmt\wedge\tau_{n}]}v(X^{\pi}_{\tmt\wedge\tau_{n}})\geq 0$}, it gives  $v(x)\geq \E_{x}\bigg[\int_{[0,\tmt\wedge\tau_{n}]}\expo^{-\delta\tms}\nu^{\pi}_{\tms}\der N^{\gamma}_{\tms}\bigg].$
	On the other hand, observe that $\tau_{n} \xlongrightarrow[n\rightarrow\infty]{}\tau^{\pi}$ $\Pro_{x}$-a.s.. Since 
	\begin{equation}\label{ver4.1}
		\int_{[0,\tmt\wedge\tau_{n}]}\expo^{-\delta\tms}\nu^{\pi}_{\tms}\der N^{\gamma}_{\tms}\uparrow V^{\pi}(x),
	\end{equation}	
	when $n\uparrow\infty$ and $t\uparrow\infty$, by monotone convergence theorem, it gives that 
	\begin{equation*}
		v(x)\geq\lim_{t,n\rightarrow\infty}\E_{x}\bigg[\int_{[0,\tmt\wedge\tau_{n}]}\expo^{-\delta\tms}\nu^{\pi}_{\tms}\der N^{\gamma}_{\tms}\bigg]=\E_{x}\bigg[\int_{[0,\tau^{\pi}]}\expo^{-\delta\tms}\nu^{\pi}_{\tms}\der N^{\gamma}_{\tms}\bigg]= {V^{\pi}(x)}.
	\end{equation*}
	 {From here, we conclude that $v\geq V^{\gamma}$ on $[0,\infty)$.}  
	
	Now, consider the strategy $\pi^{\br^{\gamma}}=(u^{\br^{\gamma}},L^{\br^{\gamma}})$ given by \eqref{opt1} where the controlled process $X^{\br^{\gamma}}$ associated to this strategy evolves as \eqref{SDE1}.  {Taking $ \tau_{n}$ as in \eqref{st1}},  it can be observed that \eqref{ver3} holds, but with the substitution of  $X^{\pi}$, $u^{\pi}$ and $\nu^{\pi}$ with $X^{\br^{\gamma}}$, $u^{\br^{\gamma}}$ and $\nu^{\br^{\gamma}}$, because of $\pi^{\br^{\gamma}}\in\mathcal{A}$. Considering that $v$ satisfies \eqref{eq1} and \eqref{eq2}--\eqref{lmax2}, {it is easy to check that,}
	\begin{equation}\label{er1}
		v(x)=\E_{x}\Big[\expo^{-\delta[\tmt\wedge \tau_{n}]}v(X^{\br^{\gamma}}_{\tmt\wedge\tau_{n}})\Big]+\E_{x}\bigg[\int_{0}^{\tmt\wedge\tau_{n}}\expo^{-\delta\tms}\nu^{\br^{\gamma}}_{\tms}\der N^{\gamma}_{\tms}\bigg]\qquad \text{for}\ \tmt\geq0.
	\end{equation}
Assuming that 
\begin{equation}\label{er2}
\lim_{\tmt,n\rightarrow\infty}\E_{x}\Big[\expo^{-\delta[\tmt\wedge \tau_{n}]}v(X^{\br^{\gamma}}_{\tmt\wedge\tau_{n}})\Big]=0
\end{equation}
which will be proven later, letting $t,n\rightarrow\infty$ on \eqref{er1}, and  taking into account that $\tau_{n} \xlongrightarrow[n\rightarrow\infty]{}\tau^{\br^{\gamma}}=\inf\{t>0:X^{\br^{\gamma}}_{\tmt}<0\}$ $\Pro_{x}$-a.s. and  \eqref{ver4.1} is true when $\pi=\pi^{\br^{\gamma}}$, we can immediately conclude that   $v=V^{b_{
\gamma}}=V $ and that $\pi^{\br^{\gamma}}$ is an optimal strategy. 

To complete the proof,  we need to  verify the truth  of \eqref{er2}. Notice that the expected value  in  \eqref{er2} can be decomposed on the events $\{\tau^{b^\gamma}<\infty\}$ and $\{\tau^{b^\gamma}=\infty\}$. On the first  event, we get that $\lim_{t,n\rightarrow\infty}\expo^{-\delta[\tmt\wedge\tau_{n}]}v(X^{\br^{\gamma}}_{\tmt\wedge\tau_{n}})\uno_{\{\tau^{\br^{\gamma}}<\infty\}}=\expo^{-\delta\tau^{\br^{\gamma}}}v(X^{\br^{\gamma}}_{ \tau^{\br^{\gamma}}})\uno_{\{\tau^{\br^{\gamma}}<\infty\}}=0$ $\Pro$-a.s.. Thus, by applying the Dominated Convergence theorem, we get that   $\lim_{\tmt,n\rightarrow\infty}\E_{x}\Big[\expo^{-\delta[\tmt\wedge \tau_{n}]}v(X^{\br^{\gamma}}_{\tmt\wedge\tau_{n}})\uno_{\{\tau^{b^{\gamma}}<\infty\}}\Big]=0$. Meanwhile, on the event $\{\tau^{\br^{\gamma}}=\infty\}$,   since $v$  exhibits linear growth due to the decreasing property of $v'\in(0,v'(0+))$ on $(0,\infty)$,  and using \eqref{SDE1},  we get that there exists a positive constant $K$ such that   
\begin{align}\label{des1}
	0&\leq\expo^{-\delta[\tmt\wedge\tau_{n}]} v(X^{\br^{\gamma}}_{\tmt\wedge \tau_{n}}) \uno_{\{\tau^{\br^{\gamma}}=\infty\}}\notag\\
	&\leq K\expo^{-\delta[\tmt\wedge\tau_{n}]}[1+x+\eta[\tmt\wedge \tau_{n}]+ |Y_{\tmt\wedge\tau_{n}}|]\uno_{\{\tau^{\br^{\gamma}}=\infty\}}.
\end{align}
with  $Y_{\tmt}\eqdef\int_{0}^{\tmt}\sigma u^{\br^{\gamma}}_{\tms}\der W_{\tms}$. By  taking expected value in \eqref{des1}, applying H\^older's and  Burholder-Davis-Gundy's inequalities; see \cite[Theorem 3.28, p. 166]{KS1991}, it can be checked  that 
\begin{align}\label{des1.0}
	0&\leq \E_{x}\bigg[\expo^{-\delta[\tmt\wedge\tau_{n}]} v(X^{\br^{\gamma}}_{\tmt\wedge \tau_{n}})\uno_{\{\tau^{\br^{\gamma}}=\infty\}}\bigg]\notag\\
	&\leq K\Big\{ \E\Big[\expo^{-\delta[\tmt\wedge\tau_{n}]}[1+x+\eta[\tmt\wedge \tau_{n}]\uno_{\{\tau^{\br^{\gamma}}=\infty\}}\Big]+C\E\Big[\expo^{-\delta[\tmt\wedge\tau_{n}]}\uno_{\{\tau^{\br^{\gamma}}=\infty\}}\Big]^{1/2} \Big\},
\end{align}
for some  constant  $C>0$ independent of $t$. Observe that $\lim_{t,n\rightarrow\infty}\expo^{-\delta[\tmt\wedge\tau_{n}]}\max\{[1+x+\eta[\tmt\wedge \tau_{n}],C\}\uno_{\{\tau^{\br^{\gamma}}=\infty\}}=0$ $\Pro$-a.s..  Then, by taking limit in \eqref{des1.0} as $t,n\rightarrow\infty$, and using again the Dominated Convergence theorem, it follows that  $\lim_{\tmt,n\rightarrow\infty}\E_{x}\Big[\expo^{-\delta[\tmt\wedge \tau_{n}]}v(X^{\br^{\gamma}}_{\tmt\wedge\tau_{n}})\uno_{\{\tau^{b^{\gamma}}=\infty\}}\Big]=0$. Therefore, based on the aforementioned results, we conclude that \eqref{er2} holds true. 
\end{proof}

\section{Constructing a solution to the HJB equation and describing the optimal barrier strategy}\label{ss3}

In this section, we will solve  the NLPDS \eqref{eq4.1}--\eqref{eq4.3}, providing an explicit solution to the HJB equation \eqref{eq1}.  We will consider   the cases of non-cheap ($\mu>\eta$) and cheap ($\mu=\eta$) reinsurance separately. The proofs of the main results presented in this section can be found in the Appendix.

\subsection{Non-cheap reinsurance}

 {In this case, we will examine two subcases: the expensive reinsurance case and the very-expensive reinsurance case. The first subcase arises when $\mu<2\eta$, while the second occurs when  $\mu\geq2\eta$.}

\subsubsection{The very-expensive case}  

Let us consider the scenario where the insurance company takes the maximum risk at any time first, which implies that  {$b_{1}=0$},  $u^{*}\equiv1$ on the interval $(0,\infty)$, and 
\begin{equation*}
	\max_{0\leq u\leq1}\big\{\mathcal{L}^{u}v(x)\big\}=\frac{1}{2}\sigma^{2} v''(x) +\eta v'(x) -\delta v(x), \quad\text{for}\ x>0.
\end{equation*}
To ensure that this scenario holds, we must have $\mu\geq 2\eta$ as we will show  in Proposition \ref{T1}.   {This condition indicates that the insurer's payment rate to the reinsurance firm, $\mu$, is sufficiently high, making it more advantageous for the cedent to assume the maximum risk at any given time.}

Now, let us proceed  by finding   a solution $v_{\gamma,\br}$ to \eqref{eq4.2}--\eqref{eq4.3}, when $b_{1}=0$.
\begin{prop}\label{pr1}
	Let $b_{1}=0$ and $b_{2}>0$ be fixed. Then, a solution of the NLPDS  \eqref{eq4.2}--\eqref{eq4.3} 
	is a $\hol^2$-continuous function, whose form is given by
	\begin{equation}\label{eq3.1.0}
		v_{\gamma,\br}(x)=
		\begin{cases}
			c_{1,1}(b_{2})h_{1}(x)&\text{for}\ x\in(0,b_{2}),\\
			c_{1,2}(b_{2})\expo^{\lambda_{\gamma}[x-b_{2}]}+\frac{\gamma}{\gamma+\delta}\big[x-b_{2}+v_{\gamma,\br}(b_{2})+\frac{\eta}{\gamma+\delta}\big]&\text{for}\ x\in[b_{2},\infty),
		\end{cases}
	\end{equation}
	where $\lambda_{\gamma}$ is the negative root of 
	\begin{equation}\label{f1}
		\frac{\sigma^{2}}{2}r^{2}+\eta r-(\delta+\gamma)=0,
	\end{equation}
	$h_{1}$ is   defined as
	\begin{equation}\label{f2}
		h_{1}(x)=\expo^{\theta_{+}x}-\expo^{\theta_{-}x}\quad\text{for}\ x\in(0,\infty),
	\end{equation}
	with $\theta_{+},\ \theta_{-}$ as the positive and negative root, respectively, of
	\begin{equation*}
		\frac{\sigma^{2}}{2}r^{2}+\eta r-\delta=0,
	\end{equation*}
	and 
	\begin{align}\label{f4}
		\begin{split}
		c_{1,1}(b_{2})&=\frac{\frac{\gamma}{\gamma+\delta}\big[1-\frac{\eta\lambda_{\gamma}}{\gamma+\delta}\big]}{h'_{1}(b_{2})-\frac{\delta\lambda_{\gamma}}{\gamma+\delta}h_{1}(b_{2})}\\
		c_{1,2}(b_{2})&=\frac{1}{\gamma+\delta}\bigg[\delta c_{1,1}(b_{2}) h_{1}(b_{2})-\frac{\gamma\eta}{\gamma+\delta}\bigg].
		\end{split}
	\end{align}
\end{prop}
Supposing that  $b^{\gamma}_{2} $ as in \eqref{eq2.1} belongs in $(0,\infty)$, and  $v_{\gamma,\br^{\gamma}}$ given by \eqref{eq3.1.0} (when $b_{1}=0$ and $b_{2}=b^{\gamma}_{2}$)   is  both  concave and increasing on $(0,\infty)$,  it can be immediately observed that  $v'_{\gamma,\br^{\gamma}}$ is decreasing on $(0,\infty)$. Then, in order to  determine when  
\begin{align}\label{eq3.2}
	v'_{\gamma,\br^{\gamma}}(b^{\gamma}_{2}-)=1
\end{align}
holds, calculating the  first derivative  of \eqref{eq3.1.0} on $(0,b^{\gamma}_{2})$, we obtain that \eqref{eq3.2} is equivalent to verifying  whether   
\begin{align}\label{eq3.2.0}
	g_{1}(b^{\gamma}_{2})=\frac{1}{\delta\lambda_{\gamma}}\bigg[\delta+\frac{\gamma\eta\lambda_{\gamma}}{\gamma+\delta}\bigg]
\end{align}	
is true, where 
\begin{equation}\label{eq3.3}
	g_{1}(b)\eqdef\frac{h_{1}(b)}{h'_{1}(b)}\quad \text{for}\ b>0. 
\end{equation}
To further analyse   \eqref{eq3.2.0}, let us first establish  some properties of $g_{1}$.  {These properties can be readily verified using equation \eqref{f1}, so we will omit the proof for brevity.}
\begin{lema}\label{l1}
	Let $g_{1}$ be as in \eqref{eq3.3}. Then, $g_{1}$ is strictly increasing  on $(0,\infty)$ satisfying 
	\begin{equation*}
		\lim_{b\downarrow 0}g_{1}(b)=0\quad\text{and}\quad\lim_{b\uparrow \infty}g_{1}(b)=\frac{1}{\theta_{+}}.
	\end{equation*}
\end{lema}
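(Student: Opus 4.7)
The function $g_1$ is an explicit elementary combination of exponentials, so the entire lemma reduces to routine calculus, but there is one algebraic identity that makes the proof clean. I would compute the first two derivatives
\[
h_1'(b)=\theta_{+}\expo^{\theta_{+}b}-\theta_{-}\expo^{\theta_{-}b},\qquad h_1''(b)=\theta_{+}^{2}\expo^{\theta_{+}b}-\theta_{-}^{2}\expo^{\theta_{-}b},
\]
and then treat the three claims in parallel.

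\textbf{Limits.} For $b\downarrow 0$, simply plug in: $h_1(0)=0$ while $h_1'(0)=\theta_{+}-\theta_{-}>0$, whence $g_1(b)\to 0$. For $b\uparrow\infty$, recall from \eqref{f3} that $\theta_{-}<0<\theta_{+}$, so $\expo^{\theta_{-}b}\to 0$; factoring $\expo^{\theta_{+}b}$ out of the numerator and denominator of $g_1(b)$ gives
\[
g_1(b)=\frac{1-\expo^{(\theta_{-}-\theta_{+})b}}{\theta_{+}-\theta_{-}\expo^{(\theta_{-}-\theta_{+})b}}\xrightarrow[b\to\infty]{}\frac{1}{\theta_{+}}.
\]

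\textbf{Monotonicity.} The quotient rule yields
\[
g_1'(b)=\frac{[h_1'(b)]^{2}-h_1(b)\,h_1''(b)}{[h_1'(b)]^{2}},
\]
so strict monotonicity is equivalent to positivity of the numerator. This is where the single algebraic identity enters. Expanding both $[h_1'(b)]^{2}$ and $h_1(b)h_1''(b)$, the terms $\theta_{+}^{2}\expo^{2\theta_{+}b}$ and $\theta_{-}^{2}\expo^{2\theta_{-}b}$ cancel, and what remains is
\[
[h_1'(b)]^{2}-h_1(b)\,h_1''(b)=\bigl(\theta_{+}^{2}+\theta_{-}^{2}-2\theta_{+}\theta_{-}\bigr)\expo^{(\theta_{+}+\theta_{-})b}=(\theta_{+}-\theta_{-})^{2}\expo^{(\theta_{+}+\theta_{-})b}>0,
\]
so $g_1'(b)>0$ on $(0,\infty)$ as required.

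\textbf{Main obstacle.} Honestly, there is no real obstacle: the statement is an exercise on the specific function $h_1$. The only point worth checking carefully is the cancellation that converts $[h_1']^{2}-h_1 h_1''$ into a perfect square times a positive exponential; once this identity is in place, all three assertions follow immediately. I would present the monotonicity calculation first (since it is the substantive computation) and then dispatch the two limits as short corollaries of the explicit formula for $g_1(b)$.
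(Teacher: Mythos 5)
Your proof is correct and follows essentially the same route as the paper: the quotient rule with the identity $[h_1']^{2}-h_1h_1''=(\theta_{+}-\theta_{-})^{2}\expo^{(\theta_{+}+\theta_{-})b}>0$ for monotonicity, and direct evaluation of the two limits from the explicit formula for $g_1$. (Your exponent $(\theta_{+}+\theta_{-})b$ is in fact the correct one; the sign conclusion is unaffected either way.)
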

So, by \eqref{eq3.2.0} and Lemma \ref{l1}, we deduce that there exists a $b^{\gamma}_{2}>0$ such that \eqref{eq3.2} holds if and only if
\begin{equation}\label{eq3.4}
	0<\frac{1}{\delta\lambda_{\gamma}}\bigg[\delta+\frac{\gamma\eta\lambda_{\gamma}}{\gamma+\delta}\bigg]<\frac{1}{\theta_{+}}
\end{equation}
is true. Taking $f_{1}$ as follows
\begin{align}\label{eq3.5}
	f_{1}(\gamma)&\eqdef\frac{1}{\lambda_{\gamma}\delta}\bigg[\delta+\frac{\lambda_{\gamma}\eta\gamma}{\delta+\gamma}\bigg]=\frac{\eta\gamma}{\delta[\delta+\gamma]}-\frac{\sigma^{2}}{\eta+\sqrt{\eta^{2}+2\sigma^{2}(\delta+\gamma)}}\quad\text{for}\ \gamma>0,
\end{align}
the next lemma provides the necessary conditions on $\gamma$ so that  \eqref{eq3.4}  is satisfied.  {For the sake of brevity, the proof will be omitted; however, it should not be difficult for the reader to verify its correctness.}
\begin{lema}\label{l2}
	Let $f_{1}$ be as in \eqref{eq3.5}. Then  $f_{1}$ is strictly increasing on $(0,\infty)$ satisfying
	\begin{equation*}
		\lim_{\gamma\downarrow0}f_{1}(\gamma)=\frac{1}{\theta_{-}}\quad\text{and}\quad\lim_{\gamma\uparrow\infty}f_{1}(\gamma)=\frac{\eta}{\delta}.
	\end{equation*}
\end{lema}

Since $\frac{1}{\theta_{-}}<\frac{\eta}{\delta}<\frac{1}{\theta_{+}}$ and  by Lemma \ref{l2}, we conclude that  there exists a unique $\gamma_{0}>0$ such that $f_{1}(\gamma_{0})=0$. Thus, for any $\gamma\in(\gamma_{0},\infty)$ fixed, \eqref{eq3.4}  is satisfied, and  there exists a unique $b^{\gamma}_{2}\in(0,\infty)$  where \eqref{eq3.2} holds. On the other hand, calculating the first and second derivatives of $v_{\gamma,\br^{\gamma}}$ on the interval $(0,b^{\gamma}_{2})$, notice that 
\begin{align}\label{eq3.7}
	\lim_{x\downarrow0}\frac{\mu }{\sigma^{2}}\tmf(x;v_{\gamma,\br^{\gamma}})=\lim_{x\downarrow0}\frac{\mu h'_{1}(x)}{\sigma^{2}h''_{1}(x)}=-\frac{\mu}{2\eta}.
\end{align}
Then, assuming $\tmf(\cdot;v_{\gamma,\br^{\gamma}})$ is decreasing on $(0,\infty)$, by \eqref{eq3.7}, it follows immediately  that if $\mu\geq 2\eta$,  $u^{*}\equiv1$, with $u^{*}$  as in \eqref{lmax2}.

In the case that  $\gamma\in(0,\gamma_{0}]$, we take $b^{\gamma}_{2}=0$, and the solution to \eqref{eq1}, when $b_{1}=b_{2}=0$, is given by
\begin{equation}\label{eq3.6}
	v_{\gamma,0}(x)=-\frac{\gamma\eta}{[\gamma+\delta]^{2}}\expo^{\lambda_{\gamma}x}+\frac{\gamma}{\gamma+\delta}\bigg\{x+\frac{\eta}{\gamma+\delta}\bigg\}\quad\text{for}\ x>0.
\end{equation}
Then, calculating the first and second derivatives of \eqref{eq3.6}, it gives that 
$\lim_{x\downarrow0}\frac{\mu}{\sigma^{2}}\tmf(x;v_{\gamma,0})=\frac{\mu}{\sigma^{2}}\Big\{\frac{1}{\lambda_{\gamma_{0}}}-\frac{[\gamma_{0}+\delta]}{\eta[\lambda_{\gamma_{0}}]^{2}}\Big\}=-\frac{\mu}{2\eta}$. We have again that $u^{*}\equiv1$ if $\mu\geq2\eta$  and $\tmf(\cdot;v_{\gamma,0})$ is decreasing on $(0,\infty)$.

\begin{prop}\label{T1}
	Let $\mu\geq 2\eta$ and $\gamma_{0}=f^{-1}_{1}(0)$. 
	\begin{enumerate}
		\item[(i)]	If  $\gamma>\gamma_{0}$, then  \eqref{eq3.1.0}  is a $\hol^{2}$-continuous, increasing  and concave  solution to \eqref{eq1}, when
		\begin{equation}\label{eq3.9}
			b_{2}=b^{\gamma}_{2}=g_{1}^{-1}(f_{1}(\gamma)).
		\end{equation}
		Furthermore, $\tmf(\cdot;v_{\gamma,\br^{\gamma}})$ is decreasing on $(0,\infty)$ and  $b^{\gamma}_{1}=0$.
		\item[(ii)] If $\gamma<\gamma_{0}$, then  \eqref{eq3.6} is a $\hol^{2}$-continuous, increasing  and concave  solution to \eqref{eq1},    when $b_{2}=b^{\gamma}_{2}=0$. Furthermore,  $\tmf(\cdot;v_{\gamma,0})$ is decreasing on $(0,\infty)$ and $b^{\gamma}_{1}=0$. 
	\end{enumerate}
\end{prop}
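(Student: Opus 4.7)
The plan is to reduce the statement to a verification of the hypotheses of Proposition \ref{p1}: for each case I will show that the candidate $v$ is $\hol^{2}$-continuous, increasing and concave on $(0,\infty)$ with $v'/v''$ decreasing, and that the levels $b_{\gamma}$ and $x_{\gamma}$ defined in \eqref{eq2.1} are finite. The claim $x_{\gamma}=0$ (equivalent to the maximizer $u^{*}\equiv 1$ from \eqref{eq4}) makes \eqref{eq4.1} vacuous while \eqref{eq4.2} coincides with \eqref{eq3.0}, which $v$ already solves by Proposition \ref{pr1}; the HJB equation \eqref{eq1} then follows from the iff in Proposition \ref{p1}.

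For part (i), I would first check that $b_{\gamma}=g_{1}^{-1}(f_{1}(\gamma))$ is well-defined and positive: Lemma \ref{l2} and the definition of $\gamma_{0}$ give $f_{1}(\gamma)\in(0,\eta/\delta)\subset(0,1/\theta_{+})$, and Lemma \ref{l1} then exhibits a unique positive preimage; this identity is equivalent to the smooth-fit relation $v_{\gamma,b_{\gamma}}'(b_{\gamma}-)=1$ by the derivation leading to \eqref{eq3.2.0}. Next, I verify monotonicity and concavity separately on $(0,b_{\gamma})$ and $(b_{\gamma},\infty)$. On $(b_{\gamma},\infty)$, the formulas $v'(x)=c_{1,2}(b_{\gamma})\lambda_{\gamma}\expo^{\lambda_{\gamma}(x-b_{\gamma})}+\gamma/(\gamma+\delta)$ and $v''(x)=c_{1,2}(b_{\gamma})\lambda_{\gamma}^{2}\expo^{\lambda_{\gamma}(x-b_{\gamma})}$ reduce both properties to the single sign $c_{1,2}(b_{\gamma})<0$, obtainable directly from \eqref{f4} combined with $g_{1}(b_{\gamma})=f_{1}(\gamma)$. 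On $(0,b_{\gamma})$, $v=c_{1,1}(b_{\gamma})h_{1}$ with $c_{1,1}(b_{\gamma})>0$ visible from \eqref{f4}, and $h_{1}'>0$ on $(0,\infty)$ is immediate since $\theta_{+}>0>\theta_{-}$; for $h_{1}''<0$ on $(0,b_{\gamma})$, since $h_{1}''$ admits exactly one sign change on $(0,\infty)$, I must show that the smooth-fit level $b_{\gamma}$ lies strictly before that inflection point, which I would handle by combining the smooth-fit relation with the explicit formulas for $\theta_{\pm}$.

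To prove $v'/v''$ is decreasing, I perform direct calculations on each subinterval: on $(b_{\gamma},\infty)$ an elementary exponential computation; on $(0,b_{\gamma})$ a monotonicity check for a ratio of linear combinations of $\expo^{\theta_{\pm}x}$. To conclude $x_{\gamma}=0$, I invoke the limit \eqref{eq3.7}, $\lim_{x\downarrow 0}\mu v'(x)/(\sigma^{2}v''(x))=-\mu/(2\eta)\leq -1$ when $\mu\geq 2\eta$; combined with the monotonicity of $v'/v''$ just proved, this forces $\mu v'(x)/(\sigma^{2}v''(x))\leq -1$ throughout $(0,\infty)$, whence $x_{\gamma}=0$ by \eqref{eq2.1}.

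Part (ii) is considerably cleaner, since $v_{\gamma,0}$ in \eqref{eq3.6} is a single closed-form expression on $(0,\infty)$: direct differentiation gives $v'>0$, $v''<0$ and $v'/v''$ decreasing (all elementary consequences of $\lambda_{\gamma}<0$), while linear growth and $v(0)=0$ are immediate from the formula. Consistency of the choice $b_{\gamma}=0$ in \eqref{eq2.1} amounts to $v'(0+)\leq 1$, which unwraps to $\gamma\leq\gamma_{0}$ via $f_{1}(\gamma)\leq 0$. The check $x_{\gamma}=0$ is identical to part (i), using the limit computation recorded just after \eqref{eq3.6}. The main obstacle across the whole argument is the concavity on $(0,b_{\gamma})$ in part (i): all other verifications are direct from the formulas and the preceding lemmas, whereas ensuring that $b_{\gamma}$ remains strictly to the left of the inflection point of $h_{1}$ demands combining the smooth-fit identity $g_{1}(b_{\gamma})=f_{1}(\gamma)$ with quantitative information about $h_{1}$ and $\theta_{\pm}$.
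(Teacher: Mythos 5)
Your proposal follows essentially the same route as the paper's proof: reduce everything to the hypotheses of Proposition \ref{p1} on top of Proposition \ref{pr1}, check monotonicity and concavity piecewise via the signs $c_{1,1}(b_{\gamma})>0$ and $c_{1,2}(b_{\gamma})<0$, compute $v'/v''$ explicitly on each piece, and obtain $x_{\gamma}=0$ from the boundary limit $-\mu/(2\eta)\leq-1$ together with the monotonicity of $v'/v''$. Your part (ii) is also the paper's argument, with one harmless (arguably cleaner) variant: you read $v_{\gamma,0}'(0+)\leq1$ directly as $f_{1}(\gamma)\leq0$, whereas the paper computes $\gamma_{0}=\tfrac{1}{2}[\sigma\delta/\eta]^{2}$ explicitly; both are correct (for $b=0$ the appeal to Proposition \ref{pr1} should be replaced by the one-line substitution of \eqref{eq3.6} into \eqref{eq3.0}, using that $\lambda_{\gamma}$ solves \eqref{f1}).

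The one step you leave as a promissory note is exactly where the paper does its only real work: concavity of $h_{1}$ on $(0,b_{\gamma})$, i.e.\ that $b_{\gamma}$ lies strictly to the left of the inflection point $x_{1}$ of $h_{1}$. The paper closes this in two lines: from $\tfrac{\sigma^{2}}{2}h_{1}''+\eta h_{1}'-\delta h_{1}=0$ one gets $h_{1}''=\tfrac{2h_{1}'}{\sigma^{2}}[\delta g_{1}-\eta]$, so the smooth-fit identity $g_{1}(b_{\gamma})=f_{1}(\gamma)$ yields $h_{1}''(b_{\gamma})=\tfrac{2\delta h_{1}'(b_{\gamma})}{\sigma^{2}}\big[\tfrac{1}{\lambda_{\gamma}}-\tfrac{\eta}{\gamma+\delta}\big]<0$, and since $h_{1}''<0$ precisely on $(0,x_{1})$ this forces $b_{\gamma}<x_{1}$. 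Equivalently, the lemmas you already cite suffice without any computation with $\theta_{\pm}$: $h_{1}''<0$ exactly where $g_{1}<\eta/\delta$, Lemma \ref{l2} gives $f_{1}(\gamma)<\eta/\delta$, and the monotonicity of $g_{1}$ in Lemma \ref{l1} then gives $b_{\gamma}=g_{1}^{-1}(f_{1}(\gamma))<g_{1}^{-1}(\eta/\delta)=x_{1}$. So the deferred step is genuinely fillable with the tools you name, but as written it is the missing piece of your argument; note also that the same ODE identity trivializes your monotonicity check for $v'/v''$ on $(0,b_{\gamma})$, since there $v'/v''=\sigma^{2}/(2[\delta g_{1}-\eta])$ with $g_{1}$ increasing, which is how the paper handles it.
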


\subsubsection{The expensive case}\label{small1}
Now, assuming $\mu<2\eta$, we will examine the scenarios where  $b^{\gamma}_{1}\leq b^{\gamma}_{2}$ and $b^{\gamma}_{1}>b^{\gamma}_{2}$.  Recall that $b^{\gamma}_{1}$  indicates the level at which maximum retention is reached,  whereas $b^{\gamma}_{2}$ represents the comparison level at which the insurer determines if payouts to shareholders are appropriate at the arrival times $\mathcal{T}^{\gamma}$.  

Given a $b_{2}>0$ fixed,  we will determine  an  explicit solution $v_{\gamma,\br}$ to the NLPDS \eqref{eq4.1}--\eqref{eq4.3} first. To derive this solution, we will consider that $v_{\gamma,\br}$ must have linear growth, and there exists an $x_{b_{2}}\in(0,\infty)$ such that
\begin{equation}\label{c2.1}
	u^{*}_{b_{2}}(x)=
	\begin{cases}
		-\frac{\mu}{\sigma^{2}}\tmf(x;v_{\gamma,\br}),&\text{if}\ x\in(0,x_{b_{2}}),\\
		1&\text{if}\ x\in[x_{b_{2}},\infty).
	\end{cases}
\end{equation}
Subsequently, we will derive sufficient conditions regarding the parameter $\gamma$ to ensure the existence of $\br^{\gamma}$, which is defined in \eqref{eq2.1}. After, it will be shown that $\tmf(\cdot;v_{\gamma,\br^{\gamma}})$ is decreasing on $(0,\infty)$ satisfying  
\begin{equation}\label{c2.2}
	\lim_{x\downarrow0}\frac{\mu}{\sigma^{2}}\tmf(x;v_{\gamma,\br^{\gamma}})>-1, 
\end{equation}
which is enough to obtain the existence of $b^{\gamma}_{1}$ given in \eqref{eq2.1}. The conditions will be presented in Propositions \ref{T2} and \ref{T3}.

For the case where $b^{\gamma}_{1}\leq b^{\gamma}_{2}$, we take
\begin{equation}\label{c3}
	\bar{x}\eqdef\frac{c_{2,1}}{1+\delta\bar{\eta}}\ln[c_{2,2}] +\frac{\bar{\eta}[2\eta-\mu]}{2[1+\delta\bar{\eta}]},
\end{equation}
where 
\begin{align}\label{c4}
	\bar{\eta}\eqdef \frac{2\sigma^{2}}{\mu^{2}},\  c_{2,1}\eqdef \frac{\bar{\eta}[\mu-\eta]}{1+\delta\bar{\eta}},\ \text{and}\  c_{2,2}\eqdef\frac{\delta\bar{\eta}\mu+2\eta-\mu}{2\delta\bar{\eta}[\mu-\eta]}.
\end{align}
\begin{prop}\label{pr2}
	Let $\mu<2\eta$. If $b_{2}>\bar{x}=b_{1}$ is fixed,  a solution to the NLPDS \eqref{eq4.1}--\eqref{eq4.3}
	is a $\hol^2$-continuous function on $(0,\infty)$, whose form is given by
	\begin{align}\label{c6}
		v_{\gamma,\br}(x)=
		\begin{cases}
			c_{2,1}\expo^{-\eqxo^{-1}(x)}\big[\expo^{[1+\delta\bar{\eta}][M(b_{2})+\eqxo^{-1}(x)]}-1\big]&\text{if}\ x\in(0,\bar{x}),\\
			\vspace{-0.4cm}&\\
			\expo^{M(b_{2})}c_{2,2}^{-1/[1+\delta\bar{\eta}]}h_{2}(x-\bar{x})&\text{if}\ x\in(\bar{x},b_{2}),\\
			\vspace{-0.4cm}&\\
			c_{2,3}(b_{2})\expo^{\lambda_{\gamma}[x-b_{2}]}+\frac{\gamma }{\gamma+\delta}\big[x-b_{2}+v_{\gamma,\br}(b_{2})+\frac{\eta}{\delta+\gamma}\big]&\text{if}\ x\in(b_{2},\infty),
		\end{cases}
	\end{align}
	where  $\eqxo^{-1}$ is the inverse function of  
	\begin{equation}\label{eq11.0}
		\eqxo(z)= k_{2,1}\expo^{[1+\delta\bar{\eta}][z+M(b_{2})]}+c_{2,1}z+k_{2,2}(b_{2}),\ \text{for}\ z\in[-M(b_{2}),\bar{z}(b_{2})],
	\end{equation}
	with
	\begin{align}\label{c1}
		\begin{split}
			k_{2,1}&\eqdef \frac{\delta\bar{\eta}c_{2,1}}{1+\delta\bar{\eta}},\quad k_{2,2}(b_{2})\eqdef c_{2,1}\bigg[M(b_{2})-\frac{\delta\bar{\eta}}{1+\delta\bar{\eta}}\bigg],\\
			\bar{z}(b_{2})&\eqdef\frac{1}{1+\delta\bar{\eta}}\ln[c_{2,2}]-M(b_{2}),
		\end{split}
	\end{align}
	the function $h_{2}$ is taken as
	\begin{align}\label{c2}
		h_{2}(x)\eqdef\frac{1}{\theta_{+}-\theta_{-}}\Big[a_{1}\expo^{\theta_{+}x}-a_{2}\expo^{\theta_{-}x}\Big],
	\end{align}	
	with $\theta_{+}$, $\theta_{-}$, $\lambda_{\gamma}$ as in Proposition \ref{pr1}, and
	\begin{align}\label{c5}
		\begin{split}
		a_{1}& \eqdef1-\theta_{-}\frac{[2\eta-\mu]}{2\delta},\quad
			a_{2}\eqdef 1-\theta_{+}\frac{[2\eta-\mu]}{2\delta},\\
			M(b_{2})&=\ln\left[\frac{\frac{\gamma }{\gamma+\delta}-\frac{\lambda_{\gamma}\eta\gamma}{[\delta+\gamma]^{2}}}{c_{2,2}^{-1/[1+\delta\bar{\eta}]}\Big[h'_{2}(b_{2}-\bar{x})-\frac{\lambda_{\gamma}\delta}{\delta+\gamma}h_{2}(b_{2}-\bar{x})\Big]}\right],\\
			c_{2,3}(b_{2})&\eqdef \frac{\expo^{M(b_{2})}\delta c_{2,2}^{-1/[1+\delta\bar{\eta}]}}{\delta+\gamma}h_{2}(b_{2}-\bar{x})-\frac{\eta\gamma}{[\delta+\gamma]^{2}}.
		\end{split}
	\end{align}
\end{prop}
\begin{rem}
	Observe that $\eqxo$, given in \eqref{eq11.0}, is a positive and increasing function on $(0,\bar{x})$. Then, by the construction of  $v_{\gamma,\br}$, which is given by \eqref{c6}, it obtains immediately that $v_{\gamma,\br}$  is concave and increasing on $(0,\bar{x})$, since  $v_{\gamma,\br}$ satisfies
	\begin{equation}\label{c5.2}
		v'_{\gamma,\br}(x)=\expo^{-\eqxo^{-1}(x)}\quad\text{and}\quad v''_{\gamma,\br}(x)=-\frac{\expo^{-\eqxo^{-1}(x)}}{\eqxo'(\eqxo^{-1}(x))}, \quad\text{for}\ x\in(0,\bar{x}).
	\end{equation}
	It  implies that $u^{*}_{b_{2}}(x)<1$ for $x\in(0,\bar{x})$, and $u^{*}_{b_{2}}(x)=1$ at $x=\bar{x}$. For a  more detail exposition, see Subsection \ref{proof2}.
\end{rem}
Taking the first derivative in \eqref{c6}  on $(\bar{x},b_{2})$,  we see that \eqref{eq3.2} is achieved if and only if
\begin{align}\label{eq13}
	g_{2}(b^{\gamma}_{2}-\bar{x})=\frac{1}{\lambda_{\gamma}\delta}\bigg[\delta+\frac{\lambda_{\gamma}\eta\gamma}{\delta+\gamma}\bigg]
\end{align}
is true, where
\begin{equation}\label{eq13.1}
	g_{2}(b)=\frac{h_{2}(b)}{h'_{2}(b)}\quad \text{for}\ b\in(0,\infty).
\end{equation}	

In order to guarantee   the veracity of \eqref{eq13}, let us see  some  properties of the function $g_{2}$ first.  {These properties can be easily verified using equation \eqref{c2}, so we will omit the proof for conciseness.}
\begin{lema}\label{l3}
	Let $g_{2}$ be as in \eqref{eq13.1}. Then, $g_{2}$ is increasing on $(0,\infty)$, satisfying
	\begin{equation*}
		\lim_{b\downarrow0}g_{2}(b)=\frac{2\eta-\mu}{2\delta}\quad\text{and}\quad\lim_{b\uparrow\infty}g_{2}(b)=\frac{1}{\theta_{+}}.
	\end{equation*}
\end{lema}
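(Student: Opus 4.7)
My approach is to reduce monotonicity of $g_{2}=h_{2}/h_{2}'$ to a single sign condition, and to obtain both limits by straightforward substitution and asymptotic analysis. For monotonicity, since $g_{2}'=\{[h_{2}']^{2}-h_{2}h_{2}''\}/[h_{2}']^{2}$, I only need to establish positivity of the numerator. Expanding $h_{2},h_{2}',h_{2}''$ from \eqref{c2} as linear combinations of $e^{\theta_{+}b}$ and $e^{\theta_{-}b}$, the coefficient of $e^{2\theta_{+}b}$ in $[h_{2}']^{2}$ equals that in $h_{2}h_{2}''$ (both give $a_{1}^{2}\theta_{+}^{2}/(\theta_{+}-\theta_{-})^{2}$), and the same cancellation occurs for $e^{2\theta_{-}b}$. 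Only the cross-terms remain, and a short calculation yields
\[
[h_{2}'(b)]^{2}-h_{2}(b)h_{2}''(b)=a_{1}a_{2}\,\expo^{(\theta_{+}+\theta_{-})b}.
\]
Thus it suffices to show $a_{1}a_{2}>0$. Using Vieta's formulas for \eqref{f3}, namely $\theta_{+}+\theta_{-}=-2\eta/\sigma^{2}$ and $\theta_{+}\theta_{-}=-2\delta/\sigma^{2}$, and inserting the expressions for $a_{1},a_{2}$ from \eqref{c5}, a direct simplification gives
\[
a_{1}a_{2}=1+\frac{(2\eta-\mu)\mu}{2\delta\sigma^{2}},
\]
which is strictly positive under the standing hypothesis $0<\mu<2\eta$. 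This proves that $g_{2}$ is strictly increasing on $(0,\infty)$.

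For the two limits, at $b=0$ I would use $h_{2}(0)=(a_{1}-a_{2})/(\theta_{+}-\theta_{-})$, which telescopes to $(2\eta-\mu)/(2\delta)$, and $h_{2}'(0)=(a_{1}\theta_{+}-a_{2}\theta_{-})/(\theta_{+}-\theta_{-})$, in which the terms involving $(2\eta-\mu)/(2\delta)$ contribute $\theta_{+}\theta_{-}$ in both, hence cancel, leaving $h_{2}'(0)=1$. Therefore $g_{2}(0+)=(2\eta-\mu)/(2\delta)$. For $b\uparrow\infty$, since $\theta_{+}>0>\theta_{-}$ the term $a_{2}\expo^{\theta_{-}b}$ decays while $a_{1}\expo^{\theta_{+}b}$ dominates in both $h_{2}(b)$ and $h_{2}'(b)$, so the ratio tends to $1/\theta_{+}$, provided $a_{1}\neq 0$; this is clear from $a_{1}=1-\theta_{-}(2\eta-\mu)/(2\delta)>1$ because $\theta_{-}<0$ and $2\eta-\mu>0$.

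The main obstacle, if any, is purely algebraic: keeping track of the cancellations in $[h_{2}']^{2}-h_{2}h_{2}''$ and in the simplification of $a_{1}a_{2}$ via Vieta's relations. Once the identity $a_{1}a_{2}=1+(2\eta-\mu)\mu/(2\delta\sigma^{2})$ is in hand, both monotonicity and the boundary values follow by essentially routine manipulation.
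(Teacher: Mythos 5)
Your proof is correct and follows essentially the same route as the paper: compute $g_{2}'=\{[h_{2}']^{2}-h_{2}h_{2}''\}/[h_{2}']^{2}$, observe that the numerator collapses to $a_{1}a_{2}\expo^{(\theta_{+}+\theta_{-})b}$, reduce the whole monotonicity claim to $a_{1}a_{2}>0$, and obtain the two limits by direct substitution at $0$ and dominance of the $\expo^{\theta_{+}b}$ term as $b\uparrow\infty$. The only genuine difference is how $a_{1}a_{2}>0$ is established: you compute the product explicitly via Vieta's relations to get $a_{1}a_{2}=1+\mu(2\eta-\mu)/(2\delta\sigma^{2})>0$, whereas the paper notes that $a_{1}>0$ trivially (since $\theta_{-}<0$) and then shows $a_{2}>0$ by verifying $\theta_{+}<2\delta/(2\eta-\mu)$, which in turn rests on the same inequality $-\mu(2\eta-\mu)<2\delta\sigma^{2}$. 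Both are correct and equivalent; your explicit formula for $a_{1}a_{2}$ is arguably slightly cleaner since it makes the positivity manifest in one line.
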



By Lemma \ref{l3}, we get that $b\mapsto g_{2}(b-\bar{x})$ is  increasing on $(\bar{x},\infty)$ satisfying $ \lim_{b\downarrow\bar{x}}g_{2}(b-\bar{x})=\frac{2\eta-\mu}{2\delta}$ and $ \lim_{b\uparrow\infty}g_{2}(b-\bar{x})=\frac{1}{\theta_{+}}$.  Then, we have that \eqref{eq13}  is equivalent to verify that $\frac{2\eta-\mu}{2\delta}\leq\frac{1}{\lambda_{\gamma}\delta}\big[\delta+\frac{\lambda_{\gamma}\eta\gamma}{\delta+\gamma}\big]<\frac{1}{\theta_{+}}$,
which is true only  for $\gamma>\gamma_{1}\eqdef  f^{-1}_{1}\big(\frac{2\eta-\mu}{2\delta}\big)$, where $f^{-1}_{1}$ is the inverse of $f_{1}$ given  in \eqref{eq3.5}, because of $\frac{1}{\theta_{-}}<\frac{2\eta-\mu}{2\delta}<\frac{\eta}{\delta}<\frac{1}{\theta_{+}}$  and Lemma \ref{l2}. Thus, for $\gamma>\gamma_{1}$ fixed,  there exists a unique $b^{\gamma}_{2}\in(0,\infty)$ that satisfies \eqref{eq13}.   By the seen above, we get the following proposition. 
\begin{prop}\label{T2}
	If $\mu<2\eta$ and $\gamma>\gamma_{1}$, then \eqref{c6}  is a $\hol^{2}$-continuous, increasing  and concave  solution to \eqref{eq1}, 	with 
	\begin{equation}\label{b1}
		b_{2}=b^{\gamma}_{2}=\bar{x}+g^{-1}_{2}(f_{1}(\gamma)).
	\end{equation}
	Furthermore, $\tmf(\cdot;v_{\gamma,\br^{\gamma}})$ is decreasing on $(0,\infty)$ satisfying \eqref{c2.2}. Then, $b^{\gamma}_{1}=\bar{x}$.
\end{prop}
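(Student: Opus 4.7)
The plan is to treat Proposition \ref{pr2} as providing the right candidate family $\{v_{\gamma,b}\}_{b>\bar{x}}$ and then pin down the barrier by imposing the smooth pasting condition $v'(b_\gamma-)=1$, which, by direct differentiation of the middle piece of \eqref{c6} on $(\bar{x},b)$, is equivalent to \eqref{eq13}. Existence and uniqueness of $b_\gamma$ then reduces to comparing the range of $g_2$ (supplied by Lemma \ref{l3}) with the value $f_1(\gamma)$ (supplied by Lemma \ref{l2}). Concretely, since Lemma \ref{l3} yields $g_2:(0,\infty)\to\bigl(\tfrac{2\eta-\mu}{2\delta},\tfrac{1}{\theta_+}\bigr)$ bijectively, and since $f_1$ is strictly increasing with $f_1(\gamma_1)=\tfrac{2\eta-\mu}{2\delta}$ and $\lim_{\gamma\uparrow\infty}f_1(\gamma)=\eta/\delta<1/\theta_+$, the equation $g_2(b_\gamma-\bar{x})=f_1(\gamma)$ has a unique solution in $(\bar{x},\infty)$ for every $\gamma>\gamma_1$, giving formula \eqref{b1}.

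Next I would verify the regularity, monotonicity and concavity of $v_{\gamma,b_\gamma}$ piecewise. On $(0,\bar{x})$, the identities \eqref{c5.2} immediately give $v'_{\gamma,b_\gamma}>0$ and $v''_{\gamma,b_\gamma}<0$ once one checks that $\eqxo$ defined in \eqref{eq11.0} is strictly increasing; this amounts to observing that $\eqxo'(z)=k_{2,1}[1+\delta\bar{\eta}]\expo^{[1+\delta\bar{\eta}][z+M(b)]}+c_{2,1}>0$ on the relevant interval. On $(\bar{x},b_\gamma)$, concavity and monotonicity of the multiple of $h_2$ follow from the sign of $a_1,a_2$ and the fact that the associated characteristic polynomial \eqref{f3} has roots of opposite sign; the imposed condition $b_\gamma>\bar{x}$ and $v'(b_\gamma-)=1$ together with $v''<0$ guarantee $v'>1$ to the left of $b_\gamma$ and $v'<1$ to its right. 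On $(b_\gamma,\infty)$ the exponential term has negative coefficient (one checks $c_{2,3}(b_\gamma)<0$ via \eqref{c5} and the defining equation for $b_\gamma$), so $v_{\gamma,b_\gamma}$ remains concave and its derivative stays strictly below $1$ while remaining positive; linear growth is immediate from the explicit form. The $\hol^2$-pasting at $\bar{x}$ is built into the choice $x_b=\bar{x}$ in \eqref{c5}, while the pasting at $b_\gamma$ is encoded in the definitions of $M(b)$ and $c_{2,3}(b)$.

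With all this in hand, the fact that $v_{\gamma,b_\gamma}$ solves the HJB equation \eqref{eq1} follows from Proposition \ref{p1}: the piece on $(0,x_\gamma)=(0,\bar{x})$ satisfies the nonlinear equation \eqref{eq4.1}, which is equivalent to \eqref{nc1} in view of $x<b_\gamma$ on that range; the piece on $(\bar{x},\infty)$ satisfies \eqref{eq4.2}, matching \eqref{nc2} once the indicator is interpreted correctly on the two subranges $(\bar{x},b_\gamma)$ and $(b_\gamma,\infty)$. The identification $x_\gamma=\bar{x}$ is confirmed by computing $-\mu v'_{\gamma,b_\gamma}(\bar{x})/[\sigma^2 v''_{\gamma,b_\gamma}(\bar{x})]$ from \eqref{c5.2}, using $\eqxo^{-1}(\bar{x})=\bar{z}(b_\gamma)$ and the explicit value of $\eqxo'$, which yields exactly $1$; to the left of $\bar{x}$ the same quantity is strictly less than $1$ by monotonicity of $\eqxo'$.

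The main obstacle is the monotonicity claim for $v'_{\gamma,b_\gamma}/v''_{\gamma,b_\gamma}$ on the whole half-line $(0,\infty)$. On $(0,\bar{x})$ one has, from \eqref{c5.2}, $v'/v''=-\eqxo'(\eqxo^{-1}(x))$, so the claim reduces to monotonicity of $\eqxo'$ as a function of its argument composed with the increasing map $\eqxo^{-1}$; this is a direct exponential estimate. On $(\bar{x},b_\gamma)$ one computes $v'/v''$ from the explicit linear combination of exponentials $\expo^{\theta_\pm x}$ and shows monotonicity using the signs of $a_1,a_2,\theta_\pm$. On $(b_\gamma,\infty)$ the analogous ratio, built from $\expo^{\lambda_\gamma x}$ and the linear part, is handled analogously once the sign of $c_{2,3}(b_\gamma)$ is known. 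The delicate point is the matching at $\bar{x}$ and at $b_\gamma$: one must check that the limits of $v'/v''$ from both sides agree and that no upward jumps occur there; this follows from $\hol^2$-regularity ensured by the construction. Finally, \eqref{c2.2} is read off from \eqref{eq3.7}, which yields $\lim_{x\downarrow 0}\mu v'/[\sigma^2 v'']=-\mu/(2\eta)>-1$ precisely because $\mu<2\eta$. Theorem \ref{ver0} then identifies $v_{\gamma,b_\gamma}$ with the value function and certifies the barrier strategy as optimal.
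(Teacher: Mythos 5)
Your overall strategy mirrors the paper's: pin down $b_\gamma$ via the smooth-fit condition \eqref{eq13}, compare ranges of $g_2$ and $f_1$ through Lemmas \ref{l3} and \ref{l2}, then verify concavity, monotonicity and the decrease of $v'/v''$ piecewise, and finally apply Proposition \ref{p1}. One step, however, cites the wrong formula. You read \eqref{c2.2} off from \eqref{eq3.7} and claim $\lim_{x\downarrow 0}\mu v'_{\gamma,b_\gamma}(x)/[\sigma^2 v''_{\gamma,b_\gamma}(x)]=-\mu/(2\eta)$. But \eqref{eq3.7} is derived in the regime $\mu\geq 2\eta$, where $v_{\gamma,b_\gamma}$ is a multiple of $h_1$ near the origin. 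In the present case $\mu<2\eta$, the solution on $(0,\bar{x})$ is the $\eqxo$-based nonlinear piece, and the correct limit, read from \eqref{co5.1} together with $u^*_b(x)=-\mu v'_{\gamma,b}(x)/[\sigma^2 v''_{\gamma,b}(x)]$, is $-2(\mu-\eta)/\mu$, not $-\mu/(2\eta)$. Both happen to be $>-1$ exactly when $\mu<2\eta$, so the conclusion survives, but the cited source and the displayed value are wrong for this proposition.

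A secondary point: on $(\bar{x},b_\gamma)$ you assert that concavity of the $h_2$-piece ``follows from the sign of $a_1,a_2$,'' but $h_2''>0$ eventually for large arguments even when $a_1,a_2>0$, so positivity of these coefficients alone does not yield concavity. What the paper actually uses is the defining equation \eqref{eq13} together with the ODE $\tfrac{\sigma^2}{2}h_2''+\eta h_2'-\delta h_2=0$ to show directly that $h_2''(b_\gamma-\bar{x})<0$, and hence $b_\gamma-\bar{x}$ lies to the left of the unique zero of $h_2''$; you should either import that computation or make an equivalent quantitative argument. The remaining steps — the $\hol^2$-pasting at $\bar{x}$ and $b_\gamma$, the sign $c_{2,3}(b_\gamma)<0$, the monotonicity of $v'/v''$, and the identification $x_\gamma=\bar{x}$ — are handled in the same spirit as the paper and are fine.
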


For the case that $b^{\gamma}_{2}<b^{\gamma}_{1}$, we will analyse the behaviour of the solution $v_{\gamma,\br^{\gamma}}$ to the NLPDS \eqref{eq4.1}--\eqref{eq4.3} for   $\gamma\in(0,\gamma_{1})$, as shown in Proposition \ref{T3}.  For $\beta\in\R$ fixed, take $H_{\beta}$ and $\bar{f}_{\beta}$ as
\begin{align}
	H_{\beta}(z)&\eqdef\int_{\expo^{-\beta}}^{z}\frac{1}{y^{2}g(y)}\der y,\quad \text{for}\ z>\expo^{-\beta},\label{e5.1}\\
	\bar{f}_{\beta}(z)
	&\eqdef c_{3,1}(\beta)[G(z)-G(\expo^{-\beta})]\notag\\
	&\quad-\bar{\eta}[\mu-\eta]\bigg[G(z)H_{\beta}(z)-\int_{\expo^{-\beta}}^{z}\frac{G(y)}{y^{2}g(y)}\der y\bigg], \quad\text{for}\  z>\expo^{-\beta},\label{e5.2}
\end{align}
with 
\begin{equation}\label{eq5.3}
	c_{3,1}(\beta)\eqdef\frac{\sigma^{2}}{\mu\alpha_{\gamma} g(\alpha_{\gamma})}+\bar{\eta}[\mu-\eta]H_{\beta}(\alpha_{\gamma})\quad\text{and}\quad\alpha_{\gamma}\eqdef \frac{\gamma+\delta}{\gamma }\bigg[1+\frac{\mu}{\sigma^{2}\lambda_{\gamma}}\bigg].
\end{equation}
Here, $G$ is a gamma accumulative distribution  function  with parameters $(\bar{\eta}[\delta+\gamma]+1,[\gamma\bar{\eta}]^{-1}])$, and its  {probability} density function  {(PDF)} is represented by $g$ which has the following form
\begin{align}\label{g1}
	g(x)=\frac{[\bar{\eta}\gamma]^{\bar{\eta}[\delta+\gamma]+1}}{\Gamma(\bar{\eta}[\delta+\gamma]+1)}x^{\bar{\eta}[\delta+\gamma]}\expo^{-\gamma\bar{\eta}x}\quad\ \text{for}\ x>0,
\end{align}
where $\Gamma(\cdot)$ is the gamma function. Observe that $\alpha_{\gamma}$ is a positive constant due to $2\eta-\mu>0$, and $\bar{f}_{\beta}$ is an invertible function for $\beta\in(0,\infty)$. For more details, refer to Subsection \ref{proof3}.  Additionally, for each $\gamma\in(0,\gamma_{1})$ fixed,  it is worth noticing that  $\beta\mapsto H_{\beta}(\alpha_{\gamma})$ is well-defined on $(0,\infty)$, because of $\alpha_{\gamma}>1$; as stated in  Lemma \ref{l4}.(i).

Proposition \ref{pr3}  presents a solution $v_{\gamma,\br}$ to the NPDS \eqref{eq4.1}--\eqref{eq4.3}, which depends on $b_{2}>0$ satisfying  
\begin{align}\label{e5.4}
	[1+\delta\bar{\eta}]\bigg[1+\frac{b_{2}}{c_{2,1}}\bigg]\in([1+\delta\bar{\eta}],\bar{g}(0+)],
\end{align}
with 
\begin{align}\label{e5.6}
	\bar{g}(\beta)&\eqdef\frac{\expo^{-\beta}g(\expo^{-\beta})}{c_{2,1}}c_{3,1}(\beta)+\ln\bigg[\frac{1}{\delta\bar{\eta}}\bigg\{\frac{\expo^{-\beta}g(\expo^{-\beta})}{c_{2,1}}c_{3,1}(\beta)-1\bigg\}\bigg], \quad\text{for}\ \beta\in(0,\underbar{$b$}),
\end{align}
and $\underbar{$b$}\eqdef\inf\{\beta>0: \bar{g}(\beta)<0\}<\infty$.  The function $\bar{g}$ is  a decreasing function on $(0,\underbar{$b$})$, as is verified in  Subsection \ref{proof3}. Also $\bar{g}(0+)>1+\delta\bar{\eta}$ holds if and only if  
\begin{equation}\label{e12.0.0}
\frac{1}{\delta\bar{\eta}}\bigg[\frac{g(1)}{c_{2,1}}c_{3,1}(0)-1\bigg]>1, 
\end{equation}
which, using \eqref{c4} and \eqref{e5.1}--\eqref{eq5.3},  is rewritten as follows
\begin{align}\label{e12}
f_{3}(\gamma)>f_{4}(\gamma)	
\end{align}
with
\begin{align}\label{e12.1}
	\begin{split}
		f_{2}(\gamma)&\eqdef\alpha_{\gamma}=\frac{\gamma+\delta}{\gamma }\bigg[1-\frac{\mu}{\eta+\sqrt{\eta^{2}+2\sigma^{2}[\delta+\gamma]}}\bigg],\\
		f_{3}(\gamma)&\eqdef\frac{\sigma^{2}\expo^{\bar{\eta}\gamma f_{2}(\gamma)}}{\mu [f_{2}(\gamma)]^{\bar{\eta}[\delta+\gamma]+1}}+\bar{\eta}[\mu-\eta]\int_{1}^{f_{2}(\gamma)}\frac{\expo^{\bar{\eta}\gamma y}}{y^{\bar{\eta}[\delta+\gamma]+2}}\der y,\\
		f_{4}(\gamma)&\eqdef \bar{\eta}[\mu-\eta]\expo^{\bar{\eta}\gamma},
	\end{split}
	\quad\ \text{for}\ \gamma\in(0,\gamma_{1}).
\end{align}
Remind that $\alpha_{\gamma}$ is as in \eqref{eq5.3}. The following lemma outlines the necessary conditions on $\gamma\in(0,\gamma_{1})$ such that \eqref{e12} holds. Although the proof will be omitted for brevity, the reader should find it straightforward to verify its correctness.

\begin{lema}\label{l4}
	Let $f_{2}$, $ f_{3}$, $f_{4}$ be as in \eqref{eq5.3} and \eqref{e12.1}, respectively. Then,
	\begin{enumerate}
		\item[(i)] $f_{2}$ is decreasing on $(0,\gamma_{1})$ satisfying
		\begin{equation}\label{e12.0}
			\lim_{\gamma\downarrow0}f_{2}(\gamma)=\infty\quad\text{and}\quad\lim_{\gamma\uparrow\gamma_{1}}f_{2}(\gamma)=1.
		\end{equation}
		\item[(ii)] $f_{3}$ and $f_{4}$  are increasing on $(0,\gamma_{1})$ satisfying
		\begin{align}\label{e12.3}
			\begin{split}
				&\lim_{\gamma\downarrow0}f_{3}(\gamma)=\frac{\bar{\eta}[\mu-\eta]}{\bar{\eta}\delta+1}\quad\text{and}\quad\lim_{\gamma\uparrow\gamma_{1}}f_{3}(\gamma)=\frac{\sigma^{2}\expo^{\bar{\eta}\gamma_{1}}}{\mu },\\
				&\lim_{\gamma\downarrow0}f_{4}(\gamma)=\bar{\eta}[\mu-\eta]\quad\text{and}\quad\lim_{\gamma\uparrow\gamma_{1}}f_{4}(\gamma)=\bar{\eta}[\mu-\eta]\expo^{\bar{\eta}\gamma_{1}}.
			\end{split}
		\end{align}
	\end{enumerate}
\end{lema}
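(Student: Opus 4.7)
The proof splits into three essentially independent computations, one for each function. The function $f_{4}(\gamma)=\bar{\eta}[\mu-\eta]\expo^{\bar{\eta}\gamma}$ is an increasing exponential, so monotonicity and the boundary limits in \eqref{e12.3} are immediate. For $f_{2}$ and $f_{3}$ the key algebraic relation that makes things tractable is $S_{\gamma}[S_{\gamma}-2\eta]=2\sigma^{2}[\delta+\gamma]$, where $S_{\gamma}\eqdef\eta+\sqrt{\eta^{2}+2\sigma^{2}[\delta+\gamma]}$; this follows directly from squaring the radical and furnishes the two useful identities $\sigma^{2}/S_{\gamma}=[S_{\gamma}-2\eta]/[2(\delta+\gamma)]$ and $\sigma^{2}/[S_{\gamma}-\eta]=S_{\gamma}/[2(\delta+\gamma)]$.

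For $f_{2}=\alpha_{\gamma}=[\gamma+\delta][S_{\gamma}-\mu]/[\gamma S_{\gamma}]$, the limit $\lim_{\gamma\downarrow0}f_{2}(\gamma)=\infty$ is clear since $S_{\gamma}$ stays bounded away from $0$ while $[\gamma+\delta]/\gamma\to\infty$. For the limit at $\gamma_{1}$ I would rewrite $f_{1}$ in \eqref{eq3.5} using the first identity above and collect terms to obtain
\begin{equation*}
f_{1}(\gamma)-\frac{2\eta-\mu}{2\delta}=-\frac{\gamma S_{\gamma}}{2\delta[\delta+\gamma]}\big[\alpha_{\gamma}-1\big];
\end{equation*}
since $f_{1}(\gamma_{1})=[2\eta-\mu]/[2\delta]$ by the definition of $\gamma_{1}$, this forces $\alpha_{\gamma_{1}}=1$. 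Moreover, the same identity combined with Lemma \ref{l2} shows $\alpha_{\gamma}>1$ throughout $(0,\gamma_{1})$. Monotonicity of $\alpha_{\gamma}$ then follows from logarithmic differentiation: using $S'_{\gamma}=\sigma^{2}/[S_{\gamma}-\eta]$ together with the second identity above, one computes
\begin{equation*}
\frac{\alpha'_{\gamma}}{\alpha_{\gamma}}=\frac{1}{\gamma+\delta}\bigg[\frac{\mu}{2[S_{\gamma}-\mu]}-\frac{\delta}{\gamma}\bigg],
\end{equation*}
which is strictly negative on $(0,\gamma_{1})$ since there $S_{\gamma}>\mu[\gamma+\delta]/\delta>\mu[\gamma+2\delta]/[2\delta]$.

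For $f_{3}$, the boundary values follow by plugging in the limits of $f_{2}$: at $\gamma\uparrow\gamma_{1}$, $f_{2}\to1$ kills the integral and the first summand collapses to $\sigma^{2}\expo^{\bar{\eta}\gamma_{1}}/\mu$; at $\gamma\downarrow0$, the product $\bar{\eta}\gamma f_{2}(\gamma)$ stays bounded while $[f_{2}(\gamma)]^{\bar{\eta}\delta+1}\to\infty$, so the first summand vanishes, and dominated convergence on $[1,\infty)$ identifies $\int_{1}^{\infty}y^{-\bar{\eta}\delta-2}\der y=1/[\bar{\eta}\delta+1]$ as the limit of the integral. The principal obstacle is the monotonicity of $f_{3}$, because $\gamma$ enters through three coupled channels: the exponent $\bar{\eta}\gamma y$, the power $\bar{\eta}[\delta+\gamma]+1$, and the upper endpoint $f_{2}(\gamma)$ which is \emph{decreasing} by part (i). My plan is to differentiate $f_{3}$ via Leibniz and the chain rule and then organize the result so that the chain-rule contribution carrying $f'_{2}(\gamma)$ (negative) combines at the endpoint with the $\partial_{\gamma}$-derivatives (positive), leaving an expression whose positivity reduces, via the identities from part (i) and the fact that $S_{\gamma}>2\eta>\mu$ on $(0,\gamma_{1})$, to a single scalar inequality that can be checked directly.
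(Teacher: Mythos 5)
Your argument for part (i) is correct and is a genuinely cleaner route than the paper's. The paper establishes $\alpha_{\gamma_{1}}=1$ by combining the two equivalences \eqref{a39}--\eqref{a40} to solve for $\gamma_{1}$ explicitly and then substituting, and establishes monotonicity of $f_{2}$ by splitting $f_{2}'$ as in \eqref{4.7.0} into a difference of two separately monotone pieces $\gamma\bar{g}_{1}\bar{g}_{2}$ and $\gamma\bar{g}_{1}\bar{g}_{3}$ and then checking that $\gamma_{1}$ lies on the correct side of their crossing via the inequality \eqref{4.7}. Your factorization $f_{1}(\gamma)-\frac{2\eta-\mu}{2\delta}=-\frac{\gamma S_{\gamma}}{2\delta[\delta+\gamma]}[\alpha_{\gamma}-1]$ (which I have verified) does both jobs at once: evaluating at $\gamma_{1}$ gives $\alpha_{\gamma_{1}}=1$, and invoking the strict increasingness of $f_{1}$ from Lemma \ref{l2} gives $\alpha_{\gamma}>1$ throughout $(0,\gamma_{1})$; then $S_{\gamma}>\mu[\gamma+\delta]/\delta$ is nothing but $\alpha_{\gamma}>1$ rearranged, and it feeds directly into the logarithmic derivative to give $\alpha_{\gamma}'<0$. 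The treatment of $f_{4}$ and the two boundary limits of $f_{3}$ (collapse of the integral as $f_{2}\to1$; decay of the first summand plus dominated convergence as $\gamma\downarrow0$) is also correct and is essentially the paper's argument.

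There is, however, a genuine gap: the increasingness of $f_{3}$ on $(0,\gamma_{1})$ is not proved, only described as a "plan." You correctly diagnose the difficulty --- the dependence on $\gamma$ enters through the exponent $\bar{\eta}\gamma y$, the power $\bar{\eta}[\delta+\gamma]+1$, and the \emph{decreasing} upper endpoint $f_{2}(\gamma)$, and the chain-rule term carrying $f_{2}'(\gamma)<0$ has the "wrong" sign --- but you never carry out the differentiation nor exhibit the scalar inequality to which you claim positivity reduces. This is precisely the hardest step in the paper's proof: the computation \eqref{4.10} requires introducing $\bar{g}_{4}$, proving that both $\gamma\mapsto\delta/\gamma-\ln f_{2}(\gamma)$ and $\gamma\mapsto\bar{g}_{2}(\gamma)\bar{g}_{4}(\gamma)$ are positive and decreasing on $(0,\gamma_{1})$ (the latter via the sign analysis \eqref{4.8.1}), comparing their limits as $\gamma\downarrow0$, and then an explicit positivity check at $\gamma_{1}$. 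Nothing in your proposal substitutes for that work, so monotonicity of $f_{3}$ remains unestablished as written.
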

\begin{rem}\label{rem3.13}
	By \eqref{c4}, \eqref{e12.0}--\eqref{e12.3} and since $\eta<\mu\leq2\eta$, it is easy to check that $f_{3}(0+)<f_{4}(0+)$ and  $f_{3}(\gamma_{1}-)>f_{4}(\gamma_{1}-)$. It implies that there exists a unique $\gamma_{2}\in(0,\gamma_{1})$ such that 
	\begin{equation}\label{e12.4}
		f_{3}(\gamma_{2})=f_{4}(\gamma_{2}),
	\end{equation}
	due to the increasing property of $f_{3}$ and $f_{4}$.  Therefore, for any $\gamma\in(\gamma_{2},\gamma_{1})$, \eqref{e12} holds.
\end{rem}

\begin{prop}\label{pr3}
	Let $\mu<2\eta$ and  $\gamma\in(\gamma_{2},\gamma_{1})$, with $\gamma_{2}$ satisfying $\eqref{e12.4}$. If $b_{2}>0$ is such that \eqref{e5.4} holds and $b_{1}=x_{b_{2}}$,   a solution to the NLPDS \eqref{eq4.1}--\eqref{eq4.3} is a $\hol^2$-continuous, whose form is given by
	\begin{align}\label{c8}
		&v_{\gamma,\br}(x)\notag\\
		&=
		\begin{cases}
			c_{2,1}\expo^{-\eqxo^{-1}(x)}\big[\expo^{[1+\delta\bar{\eta}][\overline{M}_{\gamma}+\eqxo^{-1}(x)]}-1\big]&\text{if}\ x\in(0,b_{2}),\\
			\vspace{-0.4cm}&\\
			\frac{\expo^{-\eqxt^{-1}(x)}}{\delta+\gamma}\Big\{\frac{g(\expo^{\eqxt^{-1}(x)})\expo^{\eqxt^{-1}(x)} }{\bar{\eta}}&\\
			\vspace{-0.4cm}&\\
			\hspace{0cm}\times[c_{3,1}(M_{2})-\bar{\eta}[\mu-\eta]H_{M_{2}}(\expo^{\eqxt^{-1}(x)})]-[\mu-\eta]\Big\} +\frac{\gamma[v_{\gamma,\br}(b_{2})+x-b_{2}]}{\delta+\gamma}&\text{if}\ x\in(b_{2},x_{b_{2}}),\\
			\vspace{-0.4cm}&\\
			c_{3,2}(\alpha_{\gamma})\expo^{\lambda_{\gamma}[x-x_{b_{2}}]}+\frac{\gamma }{\gamma+\delta}\Big[x-b_{2}+v_{\gamma,\br}(b_{2})+\frac{\eta}{\delta+\gamma}\Big]&\text{if}\ x\in(x_{b_{2}},\infty),
		\end{cases}
	\end{align}
	where $\bar{\eta}$, $c_{2,1}$, $c_{2,2}$  are as in \eqref{c4}, $\eqxo^{-1}$ is the inverse function of $\eqxo(z)= k_{2,1}\expo^{[1+\delta\bar{\eta}][z+\overline{M}_{\gamma}]}+c_{2,1}z+k_{2,2}^{(\gamma)}$, for $z\in[-\overline{M}_{\gamma},-M_{2}]$,
	with $k_{2,1}$ is as in \eqref{c1}, $\eqxt^{-1}$ is the inverse function of  $\eqxt(z)=\bar{f}_{M_{2}}(\expo^{z})+b$ for $z\in[-M_{2},\ln[\alpha_{\gamma}]]$, with $\bar{f}_{M_{2}}$ defined as in \eqref{e5.2} when $\beta=M_{2}$, and 
	\begin{align}\label{c10}
		\begin{split}
			x_{b_{2}}&=\bar{f}_{M_{2}}(\alpha_{\gamma})+b_{2},\quad
			c_{3,2}(\alpha_{\gamma})\eqdef-\frac{\mu}{\sigma^{2}\alpha_{\gamma}\lambda_{\gamma}^{2}},\quad k_{2,2}^{(\gamma)}\eqdef c_{2,1}\bigg[\overline{M}_{\gamma}-\frac{\delta\bar{\eta}}{1+\delta\bar{\eta}}\bigg],\\
			\overline{M}_{\gamma}&=\frac{1}{1+\delta\bar{\eta}}\ln\bigg[\frac{1}{\delta\bar{\eta}}\bigg\{\frac{\expo^{-M_{2}}g(\expo^{-M_{2}})}{c_{2,1}}c_{3,1}(M_{2})-1\bigg\}\bigg]+M_{2},
		\end{split}
	\end{align}
	where $M_{2}\in(0,\underbar{$b$})$ is the solution to 
	\begin{equation}\label{e11}
		\bar{g}(M_{2})=[1+\delta\bar{\eta}]\bigg[1+\frac{b_{2}}{c_{2,1}}\bigg].
	\end{equation}
	
\end{prop}

\begin{rem}
	Taking $\gamma\in(\gamma_{2},\gamma_{1})$, we get that $\bar{g}$ is a positive and decreasing function on $(0,\underbar{$b$})$. It implies that  $M_{2}$  is the unique solution to \eqref{e11}, for any $b_{2}>0$ satisfying \eqref{e5.4}.  Additionally, by the construction of  $v_{\gamma,\br}$, which is given by \eqref{c8}, it is known that $v_{\gamma,\br}$  is concave and increasing on $(0,\infty)$, since $c_{3,2}(\alpha_{\gamma})<0$ and  $v_{\gamma,\br}$ satisfies \eqref{c5.2}  on $(0,b_{2})$, and 
	\begin{equation}\label{equ11.2}
		v'_{\gamma,\br}(x)=\expo^{-\eqxt^{-1}(x)}\quad\text{and}\quad v''_{\gamma,\br}(x)=-\frac{\expo^{-\eqxt^{-1}(x)}}{\eqxt'(\eqxt^{-1}(x))}, \quad\text{for}\ x\in(b_{2},x_{b_{2}}).
	\end{equation}
	It implies also that $u^{*}_{b_{2}}(x)<1$ for $x\in(0,x_{b_{2}})$, and $u^{*}_{b_{2}}(x)=1$ at $x=x_{b_{2}}$. For a  more detail exposition, see Subsection \ref{proof3}.
\end{rem}

By the seen in the remark above and considering that \eqref{eq3.2} must be true, it follows that   $\expo^{-\eqxo^{-1}(b^{\gamma}_{2})}=\expo^{-\eqxt^{-1}(b^{\gamma}_{2})}=\expo^{M_{2}}=1$, which implies that
$M_{2}=0$ and 
\begin{align}\label{b_3}
	\eqxo^{-1}(b^{\gamma}_{2})=0\quad\Longleftrightarrow\quad b^{\gamma}_{2}=c_{2,1}\bigg[\frac{\bar{\eta}\delta}{1+\delta\bar{\eta}}[\expo^{\overline{M}_{\gamma}[1+\delta\bar{\eta}]}-1]+\overline{M}_{\gamma}\bigg].
\end{align}
We have  also that 
\begin{equation}\label{d1}
	\overline{M}_{\gamma}=\frac{1}{1+\delta\bar{\eta}}\ln\bigg[\frac{1}{\delta\bar{\eta}}\bigg\{\frac{g(1)}{c_{2,1}}c_{3,1}(0)-1\bigg\}\bigg]>0\quad \text{for}\ \gamma\in(\gamma_{2},\gamma_{1})
\end{equation}
due to \eqref{e12.0.0}.

In case that $\gamma\in(0,\gamma_2)$, we take $b_{2}=0$ and $x_{0}=\hat{f}_{-\widehat{M}_{\gamma}}(\alpha_{\gamma})$, where 
\begin{align}
	\hat{f}_{\beta}(z)&=c_{3,3}(\beta)[G(z)-G(\expo^{-\beta})]-\bar{\eta}[\mu-\eta]\bigg[G(z)H_{\beta}(z)-\int_{\expo^{-\beta}}^{z}\frac{G(y)}{y^{2}g(y)}\der y\bigg],\label{e12.0.1}\\
	c_{3,3}(\beta)&\eqdef\frac{\bar{\eta}[\mu-\eta]}{\expo^{-\beta}g(\expo^{-\beta})},\notag
\end{align}
and $\widehat{M}_{\gamma}>0$ is  the unique positive solution to
\begin{align}\label{eqM1}
	\frac{\mu-\eta}{\expo^{\widehat{M}_{\gamma}}g(\expo^{\widehat{M}_{\gamma}})}=\frac{\mu}{2\alpha_{\gamma}g(\alpha_{\gamma})}+[\mu-\eta]H_{-\widehat{M}_{\gamma}}(\alpha_{\gamma}).
\end{align}
Then, a solution to \eqref{eq4.1}--\eqref{eq4.3} when $b_{1}=x_{0}$ and $b_{2}=0$ is given by 
\begin{align}\label{c8.1}
	&v_{\gamma,0}(x)\notag\\
	&=
	\begin{cases}
		\frac{\expo^{-\eqxt^{-1}(x)}}{\delta+\gamma}\Big\{\frac{g(\expo^{\eqxt^{-1}(x)})\expo^{\eqxt^{-1}(x)} }{\bar{\eta}}&\\
		\times\big[c_{3,3}(\widehat{M}_{\gamma})-\bar{\eta}[\mu-\eta]H_{-\widehat{M}_{\gamma}}(\expo^{\eqxt^{-1}(x)})\big]-[\mu-\eta]\Big\} +\frac{\gamma x}{\delta+\gamma}&\text{if}\ x\in(0,x_{0}),\\
		c_{3,2}(\alpha_{\gamma})\expo^{\lambda_{\gamma}[x-x_{0}]}+\frac{\gamma }{\gamma+\delta}\Big[x+\frac{\eta}{\delta+\gamma}\Big]&\text{if}\ x\in(x_{0},\infty),
	\end{cases}
\end{align}
where $c_{3,2}(\alpha_{\gamma})$ is as in \eqref{c10} and  $\eqxt^{-1}$ is the inverse function of  $\eqxt:[\widehat{M}_{\gamma},\bar{z}]\longrightarrow[0,b^{\gamma}_{1}]$, with $\bar{z}=\ln[\alpha_{\gamma}]$, which has the following form $\eqxt(z)=\hat{f}_{-\widehat{M}_{\gamma}}(\expo^{z})$. For more details about the construction of \eqref{c8.1}, see the proof of Proposition \ref{T3} in Section \ref{proof3}.
\begin{prop}\label{T3}
	Let $\mu < 2\eta$ and $\gamma_{2}\in(0,\gamma_{1})$ satisfying \eqref{e12.4}. 
	\begin{enumerate}
		\item[(i)]	If  $\gamma\in(\gamma_{2},\gamma_{1})$, then  \eqref{c8}  is a $\hol^{2}$-continuous, increasing  and concave  solution to \eqref{eq1}, 	with $M_{2}=0$, $b_{2}=b^{\gamma}_{2}$ and $\overline{M}_{\gamma}$  as in \eqref{b_3} and  \eqref{d1},  respectively.  Furthermore,  $\tmf(\cdot;v_{\gamma,\br^{\gamma}})$ is decreasing on $(0,\infty)$. Then, 
		\begin{equation}\label{bar_x_1}
			b^{\gamma}_{1}=\bar{f}_{0}(\alpha_{\gamma}) +b^{\gamma}_{2}
		\end{equation}
		where $\bar{f}_{0}$ is as in \eqref{e5.2} when $\beta=0$.
		\item[(ii)] If $\gamma\in(0,\gamma_{2})$, then  $v_{\gamma,0}$ as in \eqref{c8.1}  is a solution to the equation \eqref{eq1}, with $b^{\gamma}_{2}=0$.   Furthermore,  $v'_{\gamma,0}/v''_{\gamma,0}$ is decreasing on $(0,\infty)$. Then,	
		\begin{equation}\label{bar_x_2}
			b^{\gamma}_{1}=\hat{f}_{-\widehat{M}_{\gamma}}(\alpha_{\gamma}),
		\end{equation}
		where $\hat{f}_{-\widehat{M}_{\gamma}}$ is as in \eqref{e12.0.1} when $\beta=-\widehat{M}_{\gamma}$.
	\end{enumerate}
\end{prop}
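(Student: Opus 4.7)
The overall strategy is to invoke Proposition \ref{pr3} to obtain a $\hol^2$-solution $v_{\gamma,b}$ to the NLPDS \eqref{nc4}--\eqref{nc6}, then pin down the correct barrier $b=b_\gamma$ by the first-order condition $v'(b_\gamma-)=1$, and finally verify the structural hypotheses of Proposition \ref{p1} so that the resulting function inherits the HJB equation \eqref{eq1}. For Part (i) with $\gamma\in(\gamma_2,\gamma_1)$, Remark \ref{rem3.13} gives $\bar g(0+)>1+\delta\bar\eta$, so setting $M_2=0$ in \eqref{e11} and inverting identifies a unique $b=b_\gamma$ of the form \eqref{b_3} with the associated constant $M_\gamma>0$ given by \eqref{d1}; the identity $\expo^{-\eqxt^{-1}(b_\gamma)}=\expo^{-M_2}=1$ then delivers $v'_{\gamma,b_\gamma}(b_\gamma-)=1$ automatically.

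\textbf{Verifying the hypotheses of Proposition \ref{p1}.} I would check concavity, monotonicity, and monotonicity of $v'/v''$ piecewise. On $(0,b_\gamma)$ and $(b_\gamma,x_\gamma)$ the implicit representations \eqref{c5.2} and \eqref{equ11.2} give $v'>0$, $v''<0$, and ratios $v'/v''=-\eqxo'(\eqxo^{-1}(\cdot))$ and $-\eqxt'(\eqxt^{-1}(\cdot))$ respectively; since $\eqxo'$ and $\eqxt'$ are strictly increasing in $z$ by the explicit forms \eqref{eq11.0} and the $\bar f_{M_2}$-construction, the ratio is strictly decreasing on each piece. On $(x_\gamma,\infty)$ the explicit form in \eqref{c8} is manifestly concave and increasing because $c_{3,2}(\alpha_\gamma)<0$ and $\lambda_\gamma<0$, with a strictly decreasing $v'/v''$. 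Global monotonicity of the ratio across the two junctions follows from $\hol^2$-regularity and a one-sided comparison at the shared parameter value $z=0$. Finally, \eqref{c2.2} at $0+$ is verified by evaluating \eqref{c5.2} at $z=-M_\gamma$ and checking that $-\mu\eqxo'(-M_\gamma)/\sigma^2\in(-1,0)$, which uses $M_\gamma>0$; the identification $x_\gamma=\bar f_0(\alpha_\gamma)+b_\gamma$ of \eqref{bar_x_1} then reads off \eqref{c10} with $M_2=0$. By Proposition \ref{p1}, $v_{\gamma,b_\gamma}$ is a solution to \eqref{eq1}.

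\textbf{Part (ii) and main obstacle.} For $\gamma\in(0,\gamma_2)$, condition \eqref{e12} fails, so no admissible positive $b$ satisfies \eqref{e11} with $M_2=0$ and $M_\gamma>0$; the only consistent choice is $b_\gamma=0$, which collapses the $(0,b)$-piece of \eqref{c8} into the two-piece ansatz \eqref{c8.1}. Here the constant $M_\gamma$ is fixed by the $\hol^1$-matching at the transition point $x_0=\bar f_{-M_\gamma}(\alpha_\gamma)$, which after simplification is precisely \eqref{eqM1}; existence and uniqueness of a positive solution follow from an intermediate-value argument using the strict inequalities of $f_3$ and $f_4$ (Lemma \ref{l4} and Remark \ref{rem3.13}) for $\gamma<\gamma_2$. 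The rest of the verification mirrors Part (i), with \eqref{bar_x_2} immediate from the definition of $x_0$. The main obstacle I expect is the global monotonicity of $v'/v''$ across the junction at $b_\gamma$ in Part (i): $\hol^2$-regularity only yields continuity of the ratio there, and establishing strict monotonicity across that junction requires a careful one-sided analysis of $\eqxo'$ versus $\eqxt'$ at $z=0$, where the two implicit parameterizations meet. The remaining pieces --- piecewise concavity, the limit at $0+$, and the linear growth needed in the verification theorem --- are routine given the explicit forms.
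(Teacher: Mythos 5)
Your overall strategy matches the paper's: invoke Proposition \ref{pr3} to build the $\hol^2$ solution, fix $M_{2}=0$ and the barrier $b_{\gamma}$ via \eqref{e11}--\eqref{d1}, then check the hypotheses of Proposition \ref{p1} piecewise using the implicit parametrizations \eqref{c5.2} and \eqref{equ11.2}; Part (ii) with $b_{\gamma}=0$, the two-piece ansatz \eqref{c8.1}, and the intermediate-value argument for $M_{\gamma}$ from \eqref{eqM1} is also what the paper does. Two small corrections. First, you attribute the verification of \eqref{c2.2} to $M_{\gamma}>0$, but in fact the limit $-\tfrac{\mu}{\sigma^2}\eqxo'(-M_{\gamma})=-\tfrac{2[\mu-\eta]}{\mu}$ is \emph{independent} of $M_{\gamma}$ (see \eqref{co5.1}): it lies in $(-1,0)$ precisely because $\mu<2\eta$. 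The actual role of $M_{\gamma}>0$ is to guarantee $v'_{\gamma,b_{\gamma}}(0+)=\expo^{M_{\gamma}}>1$, so that the infimum defining $b_{\gamma}$ in \eqref{eq2.1} is strictly positive and the barrier is interior. Second, you flag global monotonicity of $v'/v''$ across the junction at $b_{\gamma}$ as the main obstacle, but this is already secured: $\hol^2$-regularity at $b_{\gamma}$ (established in Proposition \ref{pr3} via the matching $\eqxo'(-M_2)=\eqxt'(-M_2)$) together with $v''<0$ gives continuity of the ratio there, and a function continuous on $(0,\infty)$ that is decreasing on each of finitely many covering subintervals is decreasing globally — no further one-sided comparison is needed. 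With those two points adjusted, the proposal is in agreement with the paper's proof.
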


\subsection{Cheap reinsurance}

In this subsection, we analyse the case where $\eta=\mu$. Similar to our previous analysis, we examine the scenarios $b^{\gamma}_{1}\leq b^{\gamma}_{2}$ and $b^{\gamma}_{1}> b^{\gamma}_{2}$. However, since   $\mu<2\mu$, we can leverage the arguments presented in Sub-subsection \ref{small1} to verify the validity of this subsection. Consequently, we will omit the proofs of the main results of this  part.

For the first case, let us consider $b_{2}>\hat{x}$, with 
\begin{equation*}
	\hat{x}\eqdef\frac{\sigma^{2}}{\mu[1+\delta\bar{\eta}]}.
\end{equation*}
\begin{prop}\label{pr4}
	Let $\eta=\mu$. If $b_{2}>\hat{x}=b_{1}$ is fixed,  a solution to the NLPDS \eqref{eq4.1}--\eqref{eq4.3}
	is a $\hol^2$-continuous function on $(0,\infty)$, whose form is given by
	\begin{align}\label{c6.2}
		v_{\gamma,\br}(x)=
		\begin{cases}
			c_{4,1}(b_{2})x^{\delta\bar{\eta}/[1+\delta\bar{\eta}]}&\text{if}\ x\in(0,\hat{x}),\\
			\vspace{-0.4cm}&\\
			c_{4,1}(b_{2})h_{3}(x-\hat{x})&\text{if}\ x\in(\hat{x},b_{2}),\\
			\vspace{-0.4cm}&\\
			c_{4,2}(b_{2})\expo^{\lambda_{\gamma}[x-b_{2}]}+\frac{\gamma }{\gamma+\delta}\Big[x-b_{2}+v_{\gamma,\br}(b_{2})+\frac{\mu}{\delta+\gamma}\Big]&\text{if}\ x\in(b_{2},\infty),
		\end{cases}
	\end{align}
	where the function $h_{3}$ is taken as $h_{3}(x)\eqdef\frac{1}{\theta_{+}-\theta_{-}}\Big[\bar{a}_{1}\expo^{\theta_{+}x}-\bar{a}_{2}\expo^{\theta_{-}x}\Big]$,
	with $\theta_{+}$, $\theta_{-}$, $\lambda_{\gamma}$ as in Proposition \ref{pr1}, and
	\begin{align*}
		\begin{split}
			\bar{a}_{1}& \eqdef\bigg[\frac{\delta\bar{\eta}}{1+\delta\bar{\eta}}\bigg]\hat{x}^{-1/[1+\delta\bar{\eta}]}-\theta_{-}\hat{x}^{\delta\bar{\eta}/[1+\delta\bar{\eta}]},\\
			\bar{a}_{2}&\eqdef \bigg[\frac{\delta\bar{\eta}}{1+\delta\bar{\eta}}\bigg]\hat{x}^{-1/[1+\delta\bar{\eta}]}-\theta_{+}\hat{x}^{\delta\bar{\eta}/[1+\delta\bar{\eta}]},\\
			c_{4,1}(b_{2})&=\frac{\frac{\gamma }{\gamma+\delta}-\frac{\mu\gamma\lambda_{\gamma}}{[\delta+\gamma]^{2}}}{h'_{3}(b_{2}-\hat{x})-\frac{\lambda_{\gamma}\delta}{\gamma+\delta}h_{3}(b_{2}-\hat{x})},\\
			c_{4,2}(b_{2})&\eqdef\frac{\delta}{\delta+\gamma} c_{4,1}(b_{2})h_{3}(b_{2}-\hat{x})-\frac{\mu\gamma}{[\delta+\gamma]^{2}}.
		\end{split}
	\end{align*}
\end{prop}
\begin{rem}
	By the construction of  $v_{\gamma,\br}$, which is given by \eqref{c6.2},  for any $b_{2}>\hat{x}$, it is easy to observe that $v_{\gamma,\br}$  is concave and increasing on $(0,\hat{x})$, due to $c_{4,1}(b_{2})>0$. Moreover,  $u^{*}_{b_{2}}(x)=\frac{\mu}{\sigma^{2}}[1+\delta\bar{\eta}]x$, which increases on $(0,\hat{x})$ and satisfies $u^{*}_{b_{2}}(\hat{x})=1$. For a  more detail exposition, see Subsection \ref{proof3}.
\end{rem}
Taking the first derivative in \eqref{c6.2}  on $(\hat{x},b^{\gamma}_{2})$,  we see that \eqref{eq3.2} is achieved if and only if
\begin{align}\label{eq13.2}
	&g_{4}(b^{\gamma}_{2}-\hat{x})=\frac{1}{\lambda_{\gamma}\delta}\bigg\{\delta+\frac{\mu\gamma\lambda_{\gamma}}{\delta+\gamma}\bigg\}
\end{align}
is true, where 
\begin{align}\label{eq13.3}
	g_{4}(b)\eqdef\frac{h_{3}(b)}{h'_{3}(b)}\quad\text{for}\ b\in(0,\infty).
\end{align}

In order to guarantee   the veracity of \eqref{eq13.2}, let us mention  some  properties of the function $g_{4}$ first.
\begin{lema}\label{l3.1}
	Let $g_{4}$ be as in \eqref{eq13.3}. Then, $g_{4}$ is increasing on $(0,\infty)$, satisfying
	\begin{equation*}
		\lim_{b\downarrow0}g_{4}(b)
		=\frac{\mu}{2\delta}
		\quad\text{and}\quad\lim_{b\uparrow\infty}g_{4}(b)=\frac{1}{\theta_{+}}.
	\end{equation*}
\end{lema}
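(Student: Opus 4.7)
The plan is to run almost exactly the argument used for $g_{2}$ in Lemma \ref{l3}, which applies verbatim once the coefficients of the two exponentials are compared. First I would record that $h_{3}$ solves the linear ODE $\frac{\sigma^{2}}{2}h_{3}''+\mu h_{3}'-\delta h_{3}=0$ on $(0,\infty)$, since $\theta_{+},\theta_{-}$ are the roots of \eqref{f3} (with $\eta=\mu$). Differentiating $g_{4}=h_{3}/h_{3}'$ gives $g_{4}'=[(h_{3}')^{2}-h_{3}h_{3}'']/(h_{3}')^{2}$; substituting $h_{3}''=\frac{2}{\sigma^{2}}(\delta h_{3}-\mu h_{3}')$ turns the numerator into $(h_{3}')^{2}+\frac{2\mu}{\sigma^{2}}h_{3}h_{3}'-\frac{2\delta}{\sigma^{2}}h_{3}^{2}$, which factors via Vieta's formulas for \eqref{f3} as $(h_{3}'-\theta_{+}h_{3})(h_{3}'-\theta_{-}h_{3})$.

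Next I would use the explicit form of $h_{3}$ to identify $h_{3}'-\theta_{-}h_{3}=\bar{a}_{1}\expo^{\theta_{+}x}$ and $h_{3}'-\theta_{+}h_{3}=\bar{a}_{2}\expo^{\theta_{-}x}$, so that the numerator of $g_{4}'$ equals $\bar{a}_{1}\bar{a}_{2}\expo^{(\theta_{+}+\theta_{-})x}$. Since $\theta_{-}<0$, $\bar{a}_{1}$ is visibly a sum of positive terms. The positivity of $\bar{a}_{2}$ is the only delicate step: factoring out $\hat{x}^{-1/[1+\delta\bar{\eta}]}$ and substituting $\hat{x}=\sigma^{2}/[\mu(1+\delta\bar{\eta})]$ reduces it to the inequality $\theta_{+}\mu<2\delta$, which follows by squaring the explicit formula $\theta_{+}=(-\mu+\sqrt{\mu^{2}+2\sigma^{2}\delta})/\sigma^{2}$ coming from \eqref{f3}. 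This gives strict positivity of $g_{4}'$ on $(0,\infty)$.

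For the boundary behaviour, the limit at infinity is immediate: since $\theta_{+}>0>\theta_{-}$, the term $\bar{a}_{1}\expo^{\theta_{+}b}$ dominates both $h_{3}(b)$ and $h_{3}'(b)$, so $g_{4}(b)\to 1/\theta_{+}$. At $b\downarrow 0$, the telescoping identities $\bar{a}_{1}-\bar{a}_{2}=(\theta_{+}-\theta_{-})\hat{x}^{\delta\bar{\eta}/[1+\delta\bar{\eta}]}$ and $\theta_{+}\bar{a}_{1}-\theta_{-}\bar{a}_{2}=(\theta_{+}-\theta_{-})\frac{\delta\bar{\eta}}{1+\delta\bar{\eta}}\hat{x}^{-1/[1+\delta\bar{\eta}]}$ produce $g_{4}(0+)=(1+\delta\bar{\eta})\hat{x}/(\delta\bar{\eta})$; inserting $\bar{\eta}=2\sigma^{2}/\mu^{2}$ and the value of $\hat{x}$ simplifies this to $\mu/(2\delta)$. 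The only non-routine step in the whole argument is the verification of $\bar{a}_{2}>0$; everything else reduces to algebraic manipulations already present in the proof of Lemma \ref{l3}.
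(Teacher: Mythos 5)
Your argument is correct and is essentially the paper's own approach: the paper omits an explicit proof of Lemma \ref{l3.1}, deferring to the proof of Lemma \ref{l3}, whose computation $g'=[(h')^{2}-hh'']/(h')^{2}$ with numerator $\bar{a}_{1}\bar{a}_{2}\expo^{[\theta_{+}+\theta_{-}]x}$ and whose positivity check (here $\theta_{+}\mu<2\delta$, the $\eta=\mu$ specialization of $\theta_{+}<2\delta/[2\eta-\mu]$) you reproduce faithfully, along with the same explicit limit computations. Your verification of $\bar{a}_{2}>0$ and the simplification of $g_{4}(0+)$ to $\mu/(2\delta)$ correctly fill in the details the paper leaves to the reader.
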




By Lemma \ref{l3.1}, we have that \eqref{eq13.2}  is equivalent to verify that $\frac{\mu}{2\delta}\leq\frac{1}{\lambda_{\gamma}\delta}\big[\delta+\frac{\lambda_{\gamma}\mu\gamma}{\delta+\gamma}\big]<\frac{1}{\theta_{+}}$,
which is true only  for $\gamma>\bar{\gamma}_{1}\eqdef  f^{-1}_{1}\big(\frac{\mu}{2\delta}\big)$, where $f^{-1}_{1}$ is the inverse of $f_{1}$ given  in \eqref{eq3.5} when $\eta=\mu$, because of $\frac{1}{\theta_{-}}<\frac{\mu}{2\delta}<\frac{\mu}{\delta}<\frac{1}{\theta_{+}}$  and Lemma \ref{l2}. Thus, for $\gamma>\bar{\gamma}_{1}$ fixed,  there exists a unique $b^{\gamma}_{2}\in(0,\infty)$ that satisfies \eqref{eq13.2}.   By the seen above, we get the following proposition. 
\begin{prop}\label{T2.1}
	If $\eta=\mu$ and $\gamma>\bar{\gamma}_{1}$, then \eqref{c6.2}  is a $\hol^{2}$-continuous, increasing  and concave  solution to \eqref{eq4.1}--\eqref{eq4.3}, 	with 
	\begin{equation*}
		b_{2}=b^{\gamma}_{2}=\hat{x}+g^{-1}_{4}\bigg(\frac{1}{\lambda_{\gamma}\delta}\bigg[\delta+\frac{\lambda_{\gamma}\mu\gamma}{\delta+\gamma}\bigg]\bigg).
	\end{equation*}
	Furthermore, $\tmf(\cdot;v_{\gamma,\br^{\gamma}})$ is decreasing on $(0,\infty)$ satisfying \eqref{c2.2}. Then, $b^{\gamma}_{1}=\hat{x}$.
\end{prop}
The following proposition  presents a solution $v_{\gamma,\br}$ to the NPDS \eqref{eq4.1}--\eqref{eq4.3}, which depends on $b_{2}\in(0,\bar{b})$ where 
\begin{align}
	\bar{b}_{1}\eqdef \frac{c_{5,1}(\alpha_{\gamma})g(1)}{1+\delta\bar{\eta}},\quad\text{and}\quad  c_{5,1}(\alpha_{\gamma})\eqdef\frac{\sigma^{2}}{\mu\alpha_{\gamma}g(\alpha_{\gamma})},\label{eq5.1}
\end{align}
with $\alpha_{\gamma}$ as in \eqref{eq5.3} when $\eta=\mu$. Recall that $G$ is the gamma accumulative distribution whose  PDF $g$  is given by \eqref{g1}, and its inverse function is represented by $G^{-1}$.
\begin{prop}\label{prop5}
	
	Let $\eta=\mu$. If $b_{2}\in(0,\bar{b}_{1})$ is fixed, with $\bar{b}_{1}$ as in \eqref{eq5.1},  and $b_{1}=x_{b_{2}}$,  a solution to the NLPDS \eqref{eq4.1}--\eqref{eq4.3}
	is a $\hol^2$-continuous, whose form is given by
	\begin{align}\label{c8.0}
		&v_{\gamma,\br}(x)\notag\\
		&=
		\begin{cases}
			\frac{[1+\delta\bar{\eta}]\expo^{M_{2}}b_{2}^{1/[1+\delta\bar{\eta}]}}{\delta\bar{\eta}}x^{\delta\bar{\eta}/[1+\delta\bar{\eta}]}&\text{if}\ x\in(0,b_{2}),\\
			\vspace{-0.4cm}&\\
			\frac{1}{\delta+\gamma}\Big\{\frac{c_{5,1}(\alpha_{\gamma})}{\bar{\eta}}g\big(G^{-1}\big(\frac{x-b_{2}}{c_{5,1}(\alpha_{\gamma})}+G(\expo^{-M_{2}})\big)\big) +\gamma[v_{\gamma,\br}(b_{2})+x-b_{2}]\Big\}&\text{if}\ x\in(b_{2},x_{b_{2}}),\\
			\vspace{-0.4cm}&\\
			c_{3,2}(\alpha_{\gamma})\expo^{\lambda_{\gamma}[x-x_{b_{2}}]}+\frac{\gamma }{\gamma+\delta}\Big[x-b_{2}+v_{\gamma,\br}(b_{2})+\frac{\mu}{\delta+\gamma}\Big]&\text{if}\ x\in(x_{b_{2}},\infty),
		\end{cases}
	\end{align}
	where $c_{3,2}(\alpha_{\gamma})$, $c_{5,1}(\alpha_{\gamma})$ are given in \eqref{c10} and \eqref{eq5.1}, respectively, and  $x_{b_{2}}=c_{5,1}(\alpha_{\gamma})[G(\alpha_{\gamma})-G(\expo^{-M_{2}})]+b_{2}$. 
	The parameter $M_{2}>0$ is the unique solution to  $\expo^{-M_{2}}g(\expo^{-M_{2}})=\frac{b_{2}[1+\delta\bar{\eta}]}{c_{5,1}(\alpha_{\gamma})}$.
\end{prop}
\begin{rem}
	Since $\beta\mapsto\expo^{-\beta}g(\expo^{-\beta})$ is a positive and decreasing function on $(0,\infty)$, we have a unique solution $M_{2}$ to \eqref{e11}, for any $b_{2}\in(0,\bar{b}_{1})$. Additionally, $v_{\gamma,\br}$, given in \eqref{c8.0}, is concave and increasing on $(0,x_{b_{2}})$, due to its construction. Additionally, we have also that $u^{*}_{b_{2}}(x)<1$ for $x\in(0,\bar{x})$, and $u^{*}_{b_{2}}(x)=1$ at $x=x_{b_{2}}$. For a  more detail exposition, see Subsection \ref{proof3}.
\end{rem}

Since \eqref{eq3.2} should hold, it follows that   $\expo^{M_{2}}=1$, which implies that
$M_{2}=0$ and 
\begin{align}\label{e1}
	b^{\gamma}_{2}=\frac{c_{5,1}(\alpha_{\gamma})g(1)}{1+\delta\bar{\eta}}.
\end{align}
Notice that $b^{\gamma}_{2}$ as before is a positive real value	because $\gamma\mapsto\alpha_{\gamma}$ is a positive and  decreasing function on $(0,\bar{\gamma}_{1})$, due to Lemma \ref{l4}.(i) when $\eta\downarrow\mu$.

\begin{prop}\label{prop6}
	If $\eta=\mu$ and $\gamma\in(0,\bar{\gamma}_{1})$, then \eqref{c8.0}  is a $\hol^{2}$-continuous, increasing  and concave  solution to \eqref{eq1}, 	with $b_{2}=b^{\gamma}_{2}$ as in \eqref{e1}.
	Furthermore, $\tmf(\cdot;v_{\gamma,\br^{\gamma}})$ is decreasing on $(0,\infty)$ satisfying \eqref{c2.2}. Then, 
	\begin{align*}
		b^{\gamma}_{1}=c_{5,1}(\alpha_{\gamma})[G(\alpha_{\gamma})-G(1)]+b^{\gamma}_{2}.
	\end{align*}
	
\end{prop}

 {\begin{rem}
To conclude this subsection, we will provide a brief discussion on the behaviour of the solutions presented in Propositions \ref{T1}, \ref{T2}, \ref{T3}, \ref{T2.1}, and \ref{prop6} with respect to the parameter $\gamma$. Specifically, as $\gamma$ approaches infinity, we recover the value functions for singular dividend strategies established by \citet{HT1999} and \citet{T2000}, pertaining to the cases of cheap and non-cheap reinsurance, respectively.
\end{rem}}

\section{Numerical results} \label{S5}

This section presents the numerical results for the various scenarios discussed in Section \ref{ss3}, taking into account the parameters outlined in Table \ref{Ta1}. Each row of Table \ref{Ta1} represents a distinct scenario, as mentioned earlier.  In Figures \ref{F1}--\ref{F3},  we illustrate  the behaviours of different $v_{\gamma,\br^{\gamma}}$.
\begin{table}[h!]
	\centering
	\begin{tabular}{|l|c|c|c|c|}\hline
		Cases&$\delta$  & $\sigma$  & $\mu$ & $\eta$ \\ \hline
		Very-expensive &0.5& 0.3 & 1.2& 0.2  \\ \hline
		Expensive  &1.5 & 0.3 & 0.8 & 0.5  \\ \hline
		Cheap reinsurance&1.5& 0.5 & 0.7 & 0.7 \\ \hline
	\end{tabular}
	\caption{Parameter values of the HJB equation \eqref{eq1} where each row of the table above represents the different scenarios presented in Section \ref{ss3}.}\label{Ta1}
\end{table}

Taking into account Propositions \ref{pr1}, \ref{T1}, and the first row of Table \ref{T1}, Figure \ref{F1}  depicts the behaviours of the optimal solution $v_{\gamma,\br^{\gamma}}$ concerning the parameter $\gamma=2^{N}$ as $N\in[-4,50]$ with a    {step size rate} 0.2.  It is evident that as $N$ increases, $v_{\gamma,\br^{\gamma}}$ (blue line) closely approaches $v_{\infty}$ (red line). Here,  $v_{\infty}$  represents the value function for the singular dividend strategies for the optimal problem derived by  \cite{T2000} in Theorem 5.2. 
\begin{figure}[h!]
	\centering
	
	\includegraphics[scale=0.6]{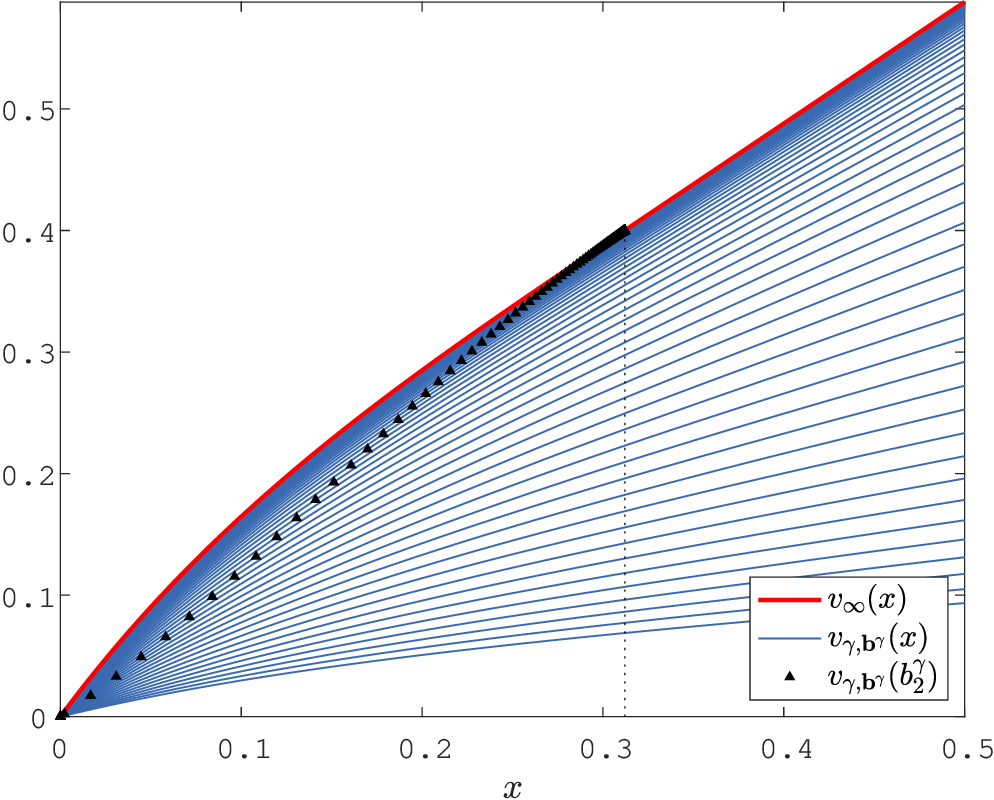}
	
	\caption{Plots of the optimal solution $v_{\gamma,\br^{\gamma}}$  considering \eqref{eq3.1.0} and \eqref{eq3.6},   and $v_{\infty}$ with the points $(b^{\gamma}_{2},v_{\gamma,\br^{\gamma}}(b^{\gamma}_{2}))$.} 
	\label{F1}
\end{figure}

In addition, recalling that $b_{1}^{\gamma}=0$ and $b_{2}^{\gamma}$ is given by \eqref{eq3.9} if $\gamma>\gamma_{0}\approx 0.2812$, otherwise $b_{1}^{\gamma}=b^{\gamma}_{2}=0$, we can  appreciate the behaviour of the trajectory $\gamma\mapsto(b^{\gamma}_{2},v_{\gamma,\br^{\gamma}}(b^{\gamma}_{2}))$ (triangular points) in this graphic. It exhibits an  increasing trend and is  converging to the point $(b_{\infty},v_{\infty}(b_{\infty}))\approx(0.3121,0.4)$ as $N\uparrow\infty$, where   $b_{\infty}$ is as in Theorem 5.2 of \cite{T2000}.  Moreover, according to Proposition \ref{T1}, in the  very-expensive case, i.e. $\mu\geq 2\eta$ (with $\mu>\eta$), the optimal strategy  is to take the maximum risk, resulting in  $b^{\gamma}_{1}=0$ for any value of $\gamma$.

\begin{figure}[h!]
	\centering
	
	\includegraphics[scale=0.6]{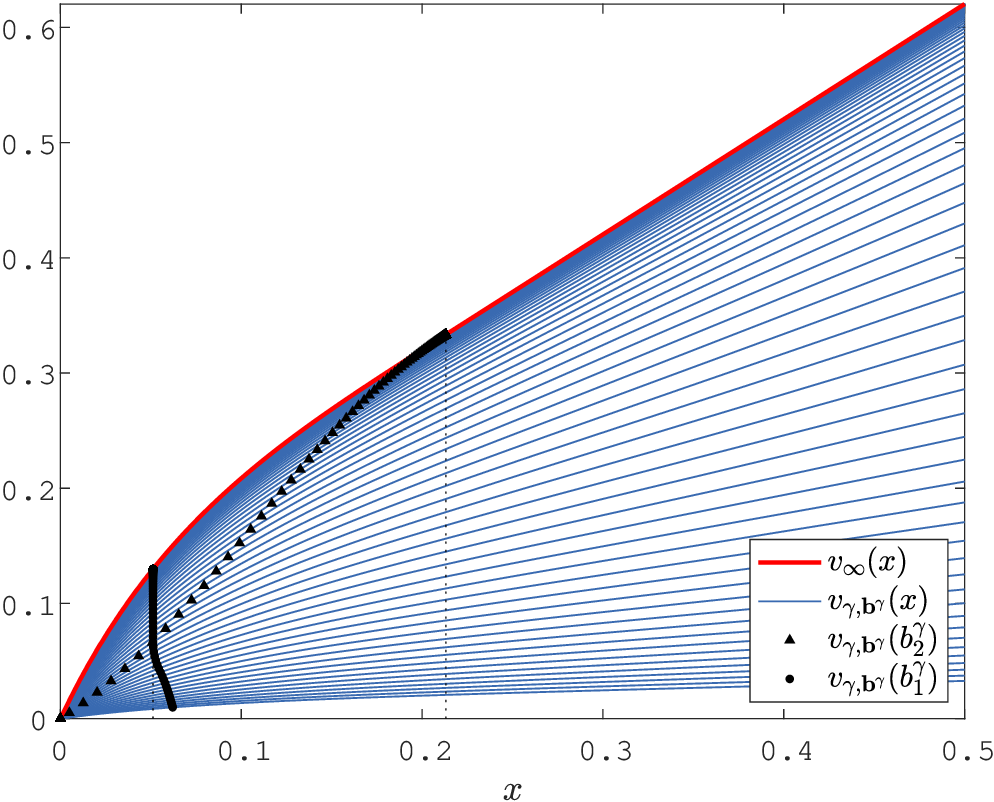}
	\caption{ Plots of the optimal solution $v_{\gamma,\br^{\gamma}}$  considering \eqref{c6}, \eqref{c8} and \eqref{c8.1} with respect  the parameters outlined in the second row of Table \ref{Ta1}, and $v_{\infty}$ with the points $(b^{\gamma}_{1},v_{\gamma,\br^{\gamma}}(b^{\gamma}_{1}))$ and $(b^{\gamma}_{2},v_{\gamma,\br^{\gamma}}(b^{\gamma}_{2}))$  indicated by the circles and triangles, respectively.}
	\label{F2}
\end{figure}
\begin{figure}[h!]
	\centering
	
	\includegraphics[scale=0.6]{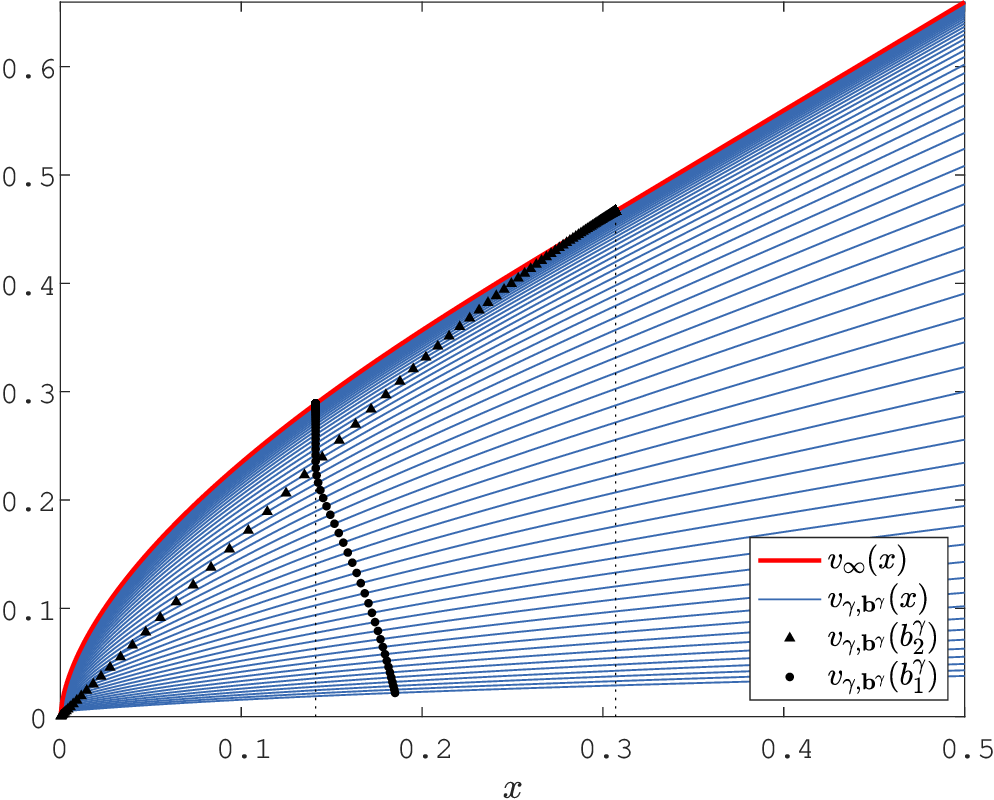}
	\caption{Plots of the optimal solution $v_{\gamma,\br^{\gamma}}$  considering \eqref{c6.2} and \eqref{c8.0}  with respect  the parameters outlined in the third row of Table \ref{Ta1}, and $v_{\infty}$  with the points $(b^{\gamma}_{1},v_{\gamma,\br^{\gamma}}(b^{\gamma}_{1}))$ and   $(b^{\gamma}_{2},v_{\gamma,\br^{\gamma}}(b^{\gamma}_{2}))$  indicated by the circles and triangles, respectively.}
	\label{F3}
\end{figure}

Utilizing Propositions \ref{pr2}, \ref{T2}, \ref{pr3}, \ref {T3}, \ref{pr4} and \ref{T2.1} and considering the second and the third row of Table \ref{Ta1}, in Figures \ref{F2} and \ref{F3}  depict the  behaviours of the optimal solution $v_{\gamma,\br^{\gamma}}$ with respect to  $\gamma=2^{N}$ with $N$ as previously defined. We observe that $v_{\gamma,\br^{\gamma}}$ (blue line) goes to $v_{\infty}$ (red line) when $N\uparrow\infty$. Here,  $v_{\infty}$  stands the value function for the singular dividend strategies for the optimal problem obtained by \cite{T2000} in Theorem 5.3 and   \cite{HT1999} in Theorem 3.1, respectively.  

In  the expensive case, i.e. $\mu<2\eta$ (with $\mu>\eta$), as indicated in Propositions \ref{T2} and \ref{T3}, the values of $b^{\gamma}_{1}$ and  $b^{\gamma}_{2}$ are determined by \eqref{c3} and \eqref{b1}, respectively, if $\gamma>\gamma_{1}\approx 1.0078$,  and by \eqref{bar_x_1} and \eqref{b_3}, respectively,  if $\gamma\in(\gamma_{2},\gamma_1)$, with $\gamma_{2}\approx0.3979$; otherwise, $b_{2}=0$ and $b^{\gamma}_{1}$ is given by \eqref{bar_x_2}. We observe  the trajectory $\gamma\mapsto(b^{\gamma}_{2},v_{\gamma,\br^{\gamma}}(b^{\gamma}_{2}))$ (triangular points), which exhibits an increasing trend and converges to the point $(b_{\infty},v_{\infty}(b_{\infty}))\approx(0.2131,0.3333)$ as $N\uparrow\infty$, where   $b_{\infty}$ is as in Theorem 5.3 of \cite{T2000}.  Additionally, we can also observe the behaviour of the trajectory $\gamma\mapsto(b^{\gamma}_{1},v_{\gamma,\br^{\gamma}}(b^{\gamma}_{1}))$ (circular point)  plotted with $b^{\gamma}_{1}$ determined by \eqref{c3} if $\gamma>\gamma_{1}$, by \eqref{bar_x_1} if $\gamma\in(\gamma_{2},\gamma_{1})$ and  by \eqref{bar_x_2} if $\gamma\in(0,\gamma_{2})$.   It is noteworthy  that $(b^{\gamma}_{1},v_{\gamma,\br^{\gamma}}(b^{\gamma}_{1}))$ converges to the point $(x_{\infty},v_{\infty}(x_{\infty}))\approx(0.0512,0.1299)$ as  $N\uparrow\infty$.  

In the case where $\mu=\eta$, similar results to those  described in the paragraph above  are obtained. However, it should be noted that the strategy involving $b^{\gamma}_{1}=0$ or $b^{\gamma}_{2}=0$ is not considered optimal for any value of $\gamma$, as depicted in Figure \ref{F3}.  This scenario is characterized by $\bar{\gamma}_{1}\approx3.7959$, along with the values $(x_{\infty},v_{\infty}(x_{\infty}))\approx(0.1411,0.2888)$ and $(b_{\infty},v_{\infty}(b_{\infty}))\approx(0.3071,0.4667)$. 

\section*{Disclosure statement}
M. Kelbert and H. A. Moreno-Franco made equal contributions to this paper.

\section*{Funding}
The research was done in the framework of the HSE Basic Research Program.

\appendix

\section{Proofs of some technical results}
In the upcoming subsections, we will present some proofs for the results discussed in Section \ref{ss3}.  To establish $v_{\gamma,\br}$ as in \eqref{eq3.1.0}, \eqref{c6}, \eqref{c8}, and \eqref{c8.1},  we will utilize the following results. Given the assumption that $v_{\gamma,\br}$ is concave on some open set $\mathcal{O}$,  there exists a function $\mathpzc{x}$ satisfying $-\ln[v'_{\gamma,\br}(\mathpzc{x}(z))]=z$. This transformation has been applied  in numerous optimal control problems; see e.g. \cite{T2000,HT1999} and references therein. Additionally, it is worth noting that considering  $\lambda_{\gamma}, \lambda_{+}$ as the negative and positive roots of  \eqref{f1}, respectively, the function
\begin{equation}\label{max1}
	v(x)=c_{3}\expo^{\lambda_{\gamma}[x-\max\{x_{b_{2}},b_{2}\}]}+c_{4}\expo^{\lambda_{+}[x-\max\{x_{b_{2}},b_{2}\}]}+\frac{\gamma}{\gamma+\delta}\bigg[x-b_{2}+v(b_{2})+\frac{\eta}{\gamma+\delta}\bigg]
\end{equation}
satisfies the equation $\frac{1}{2}\sigma^{2} v''(x) +\eta v'(x) -\delta v(x)+\gamma\{ x-b_{2} +v(b_{2})-v(x)\}=0$, for $x>\max\{x_{b_{2}},b\}$. Here, $c_{3}$ and $c_{4}$ are parameters to be found. However,  since our solution must have a linear growth, we conclude that $c_{4}=0$.

\subsection{Proofs of Propositions \ref{pr1} and \ref{T1}}\label{proof1}

\begin{proof}[Proof of Proposition \ref{pr1}]
	Since $v_{\gamma,\br}(0)=0$ and $b_{1}=0$, a solution of \eqref{eq4.2}  has  the form $v_{\gamma,\br}(x)=k_{1}h_{1}(x)$ for  $x\in(0,b_{2})$, otherwise, i.e. for $x>b_{2}$, $v_{\gamma,\br}(x)$ is given by \eqref{max1}, with $\max\{x_{b_{2}},b_{2}\}=b_{2}$ and $c_{4}=0$. Here $k_{1}, c_{3}$ are parameters to be found, and $h_{1}$ is  as in \eqref{f2}.   To determine the values of the parameters, applying  the smooth fit at $b_{2}>0$, 
	it can be easily checked that $k_{1}=c_{1,1}(b_{2})$ and $c_{3}=c_{1,2}(b_{2})$, where $c_{1,1}(b_{2})$ and $c_{1,2}(b_{2})$ are given in \eqref{f4}. From here, we have immediately that $v_{\gamma,\br}$ given as in \eqref{eq3.1.0}  is $\hol^{2}$-continuous on $(0,\infty)\setminus\{b_{2}\}$. However,  by using \eqref{eq4.2},  it follows easily that $v_{\gamma,\br}''(b_{2}+)=-\frac{2}{\sigma^{2}}\{\eta v_{\gamma,\br}'(b_{2}) -[\delta+\gamma] v_{\gamma,\br}(b_{2})=v_{\gamma,\br}''(b_{2}-)$. Therefore, $v$ is $\hol^{2}$-continuous at $b_{2}$. 
\end{proof}

\begin{proof}[Proof of Proposition \ref{T1}]
	Assuming that $\mu\geq2\eta$ and $\gamma>\gamma_{0}=f^{-1}_{1}(0)$, where $f^{-1}_{1}$ represents the inverse function of $f$ defined in \eqref{eq3.5}, it is known that  there exists a unique $b^{\gamma}_{2}>0$ satisfying \eqref{eq3.2.0}.  Let $v_{\gamma,\br^{\gamma}}$ be defined as in \eqref{eq3.1.0}  when $b_{2}=b^{\gamma}_{2}$. Observe that $c_{1,1}(b^{\gamma}_{2})$, as defined in \eqref{f4}, is positive because of $\lambda_{\gamma}<0$ and  $h'_{1}(b^{\gamma}_{2})-\frac{\delta\lambda_{\gamma}}{\gamma+\delta}h_{1}(b^{\gamma}_{2})
	=\frac{h'_{1}(b^{\gamma}_{2})}{\gamma+\delta}\big[\gamma-\frac{\gamma\eta\lambda_{\gamma}}{\gamma+\delta}\big]>0$.  It is worth  noting that $v_{\gamma,\br^{\gamma}}$ is  increasing on $(0,b_{2}^{\gamma})$, due to $h'_{1}>0$ on $(0,b^{\gamma}_{2})$.  Meanwhile,  since $h'_{1}$ attains its  unique minimum value at $x_{1}=\frac{2}{\theta_{+}-\theta_{-}}\ln\big[-\frac{\theta_{-}}{\theta_{+}}\big]$, it is sufficient  to verify that $h''_{1}(b^{\gamma}_{2})<h''_{1}(x_{1})=0$ to establish that $b^{\gamma}_{2}<x_{1}$, and consequently  conclude that $v_{\gamma,\br^{\gamma}}$ is concave on $(0,b^{\gamma}_{2})$.  By applying the relationship    
	\begin{equation}\label{s2.0}
		\frac{\sigma^{2}}{2}h''_{1}+\eta h'_{1}-\delta h_{1}=0\quad \text{on}\ (0,\infty),
	\end{equation}
	 and using \eqref{eq3.2.0},we have that $h''_{1}(b^{\gamma}_{2})=\frac{2h'_{1}(b^{\gamma}_{2})}{\sigma^{2}}\big[\frac{1}{\lambda_{\gamma}}\big[\delta+\frac{\gamma\eta\lambda_{\gamma}}{\gamma+\delta}\big]-\eta\big]
	=\frac{2\delta h'_{1}(b^{\gamma}_{2})}{\sigma^{2}}\big[\frac{1}{\lambda_{\gamma}} -\frac{\eta}{\gamma+\delta}\big]<0$,
	since  $h'_{1}(b^{\gamma}_{2})>0$ and $\lambda_{\gamma}<0$.  Now, notice that  $v_{\gamma,\br^{\gamma}}$ is increasing and concave on $(b^{\gamma}_{2},\infty)$, because of  $c_{1,2}(b^{\gamma}_{2})<0$. Using \eqref{f4}, \eqref{eq3.3} and \eqref{eq3.9},  we  can confirm  that the this inequality  is  true, as it follows from   $\delta[\delta+\gamma-\delta\eta\lambda_{\gamma}]>0$.  
	
	From previous findings, it is established that $v'_{\gamma,\br^{\gamma}}$ is strictly decreasing on $(0,\infty)$. Moreover,  since $b^{\gamma}_{2}$ is  the unique solution to \eqref{eq3.2.0}, it follows that  $v'_{\gamma,\br^{\gamma}}(b^{\gamma}_{2}-)=\frac{\frac{\gamma}{\gamma+\delta}\big[1-\frac{\eta\lambda_{\gamma}}{\gamma+\delta}\big]}{1-\frac{\delta\lambda_{\gamma}}{\gamma+\delta}g_{1}(b^{\gamma}_{2})}=1$.
	It implies that $b^{\gamma}_{2}=\inf\{x>0:v'_{\gamma,\br^{\gamma}}(x)<1\}<\infty$. Let us now prove that $\tmf(\cdot;v_{\gamma,\br^{\gamma}})$  is decreasing on $(0,\infty)$. Calculating the first and second derivatives of $v_{\gamma,\br^{\gamma}}$ while taking into account \eqref{s2.0}, it gives  
	\begin{align*}
		\tmf(x;v_{\gamma,\br^{\gamma}})=
		\begin{cases}
			\frac{\sigma^{2}}{2[\delta g_{1}(x)-\eta]}&\text{if}\ x\in(0,b^{\gamma}_{2}),\\
			\vspace{-0.4cm}&\\
			\frac{\gamma\expo^{-\lambda_{\gamma}[x-b^{\gamma}_{2}]}}{c_{1,2}(b^{\gamma}_{2})\lambda_{\gamma}^{2}[\gamma+\delta]}+\frac{1}{\lambda_{\gamma}}&\text{if}\ x\in(b^{\gamma}_{2},\infty).\\
		\end{cases}
	\end{align*}
	From here we see that $\tmf(\cdot;v_{\gamma,\br^{\gamma}})$ is decreasing on $(0,\infty)$ since $\delta g_{1}-\eta$ is increasing in $(0,b^{\gamma}_{2})$, and
	$\frac{\der}{\der x}\tmf(x;v_{\gamma,\br^{\gamma}})=-\frac{\gamma\expo^{-\lambda_{\gamma}[x-b^{\gamma}_{2}]}}{c_{1,2}(b^{\gamma}_{2})\lambda_{\gamma}[\gamma+\delta]}<0$ for  $x\in(b^{\gamma}_{2},\infty).$
	Thus, we conclude that $-\frac{\mu }{\sigma^{2}}\tmf(\cdot;v_{\gamma,\br^{\gamma}})$ is increasing on $(0,\infty)$ and $-\frac{\mu}{\sigma^{2}}\tmf(0+;v_{\gamma,\br^{\gamma}})=\frac{\mu}{2\eta}\geq1$. Therefore $b^{\gamma}_{1}=0$. Since $v_{\gamma,\br^{\gamma}}$ satisfies the hypotheses of  Lemma \ref{lmax1}, we conclude that $v_{\gamma,\br^{\gamma}}$ is a solution to the HJB equation \eqref{eq1}.
	
	Now, let us check the veracity of  Proposition \ref{T1}, Item (ii). For each $\gamma\in(0,\gamma_{0})$ fixed, let us take $v_{\gamma,0}$ as in \eqref{eq3.6}. By employing similar arguments as those previously shown, it is easy to check that $v_{\gamma,0}$ is an increasing and concave solution to \eqref{eq4.2}. Notice that  $v'_{\gamma,0}(0+)=-\frac{\gamma\eta\lambda_{\gamma}}{[\gamma+\delta]^2}+\frac{\gamma}{\gamma+\delta}$, which is less or equal than one if and only if
	\begin{equation}\label{s5}
		-\lambda_{\gamma}<\frac{\delta[\gamma+\delta]}{\eta\gamma}.
	\end{equation}
	From the explicit form of $\lambda_{\gamma}$, 
	we see that \eqref{s5} is equivalent to verify that $\gamma<\frac{1}{2}\big[\frac{\sigma\delta}{\eta}\big]^{2}$. Meanwhile, observe that $f_{1}\big(\frac{1}{2}\big[\frac{\sigma\delta}{\eta}\big]^{2}\big)=0$, with $f_{1}$ as in \eqref{eq3.5}.  It implies that $\gamma_{0}=\frac{1}{2}\big[\frac{\sigma\delta}{\eta}\big]^{2}$, since $f_{1}$ is strictly decreasing on $(0,\infty)$ and $\gamma_{0}$ is the unique point where $f_{1}(\gamma_{0})=0$. From here \eqref{s5} is true. Concluding that $v'_{\gamma,0}(0+)\leq1$ and $b^{\gamma}_{2}=\inf\{x>0:v'_{\gamma,0}(x)\leq1\}=0$. On the other hand,  $\tmf(x;v_{\gamma,0})
	=\frac{1}{\lambda_{\gamma}}-\frac{[\gamma+\delta]\expo^{-\lambda_{\gamma}x}}{\eta\lambda_{\gamma}^{2}}$
	is decreasing on $(0,\infty)$. Thus,  $-\frac{\mu }{\sigma^{2}}\tmf(\cdot;v_{\gamma,0})$ is increasing on $(0,\infty)$,  and 
	\begin{align}\label{s3}
		-\frac{\mu }{\sigma^{2} }\tmf(0+;v_{\gamma,0})=-\frac{\mu}{\sigma^{2}\lambda_{\gamma}}\bigg[1-\frac{\gamma+\delta}{\lambda_{\gamma}\eta}\bigg]=\frac{\mu}{2\eta}\geq0. 
	\end{align}
	Hence, $b^{\gamma}_{1}=0$. The last equality in \eqref{s3} is obtained  due to the explicit form of $\lambda_{\gamma}$. By the seen before and considering  Remark \ref{sol1}, we conclude that $v_{\gamma,0}$, given in \eqref{eq3.6} is a solution to the HJB equation \eqref{eq1}.
\end{proof}

\subsection{Proofs of Propositions \ref{pr2}, and \ref{T2}}\label{proof2}

\begin{proof}[Proof of Proposition \ref{pr2}]
	Consider $b_{2}>\bar{x}$, with $\bar{x}$ as in \eqref{c3}. Let us suppose that the solution $v_{\gamma,\br}$ to \eqref{eq4.1}--\eqref{eq4.3} is concave on $(0,x_{b_{2}})$ which will be proven later on.  Here, $b_{1}=x_{b_{2}}\leq b_{2}$ is a parameter to be determined. Then,  by this  assumption, it is known that  there exists $\eqxo:(-\infty,\bar{z})\longrightarrow[0,\infty)$ satisfying $-\ln(v'_{\gamma,\br}(\eqxo(z)))=z$, for some $\bar{z}\in\R$. It implies \eqref{c5.2}.  
	Applying \eqref{c5.2} in   \eqref{eq4.1},  it follows that
	\begin{align}\label{co2}
		&\frac{\mu^{2}\eqxo'(z) \expo^{-z}}{2\sigma^{2}}-[\mu-\eta]\expo^{-z} -\delta v_{\gamma,\br}(\eqxo(z))=0.
	\end{align}
	From here, and following the ideas proportioned by \cite{T2000}  to solve \eqref{co2}, we get that 
	\begin{align}\label{eq6}
		v_{\gamma,\br}(x)
		&=c_{2,1}\expo^{-\eqxo^{-1}(x)}\Big[\expo^{[1+\delta\bar{\eta}][M+\eqxo^{-1}(x)]}-1\Big]\quad \text{for}\ x\in(0,x_{b_{2}}),
	\end{align}
	where 
	\begin{equation}\label{eqo11.1}
		\eqxo(z)=k_{1}\expo^{[1+\delta\bar{\eta}]z}+\frac{\bar{\eta}[\mu-\eta]}{1+\delta\bar{\eta}}z+k_{2}.
	\end{equation}
	is an increasing solution to $\eqxo''(z)-[1+\delta\bar{\eta}]\eqxo'(z)+\bar{\eta}[\mu-\eta]=0$, where $\bar{\eta}$  as  in \eqref{c4}, and 
	\begin{align}
		k_{1}&=\frac{\delta\bar{\eta}c_{2,1}}{1+\delta\bar{\eta}}\expo^{M[1+\delta\bar{\eta}]},\label{eq11.2}\\
		k_{2}&=c_{2,1}\bigg[M-\frac{\delta\bar{\eta}}{1+\delta\bar{\eta}}\bigg]. \label{eq11.2.1}
	\end{align}
with $c_{2,1}$ as in \eqref{c4}. Here $M>0$ is such that   $\eqxo(-M+)=0$ and $v'_{\gamma,\br}(\eqxo(-M+))=\expo^{M}$, the function $\eqxo$ is  from $[-M,\bar{z})$ to $[0,x_{b_{2}})$, where $\bar{z}>-M$ such that $\eqxo(\bar{z})=x_{b_{2}}$. Notice that 
	\begin{equation}\label{co4}
		\eqxo'(z)=k_{1}[1+\delta\bar{\eta}]\expo^{[1+\delta\bar{\eta}]z}+\frac{\bar{\eta}[\mu-\eta]}{1+\delta\bar{\eta}}.
	\end{equation}
Since  $v_{\gamma,\br}$ satisfies the properties outlined \eqref{c5.2} on $(0,x_{b})$, with $\eqxo$ as in \eqref{eqo11.1}, we conclude that the concavity assumption initially imposed holds  true on $(0,x_{b_{2}})$, because of   \eqref{c5.2} and $\eqxo'\geq0$ on $(-M,\bar{z})$. From \eqref{c2.1}, \eqref{eq6} and \eqref{co4}, it yields that $u^{*}_{b_{2}}$ has the following form  for each $x\in(0,x_{b_{2}})$, $u^{*}_{b_{2}}(x)=\frac{\mu }{\sigma^{2}}\eqxo'(\eqxo^{-1}(x))=
	\frac{2[\mu-\eta]}{\mu[1+\delta\bar{\eta}]}\{\delta\bar{\eta}\expo^{[1+\delta\bar{\eta}][M+\eqxo^{-1}(x)]}+1\}$.
	We see that $u^{*}_{b_{2}}$ is increasing on $(0,x_{b_{2}})$ and
	\begin{align}\label{co5.1}
		\lim_{x\downarrow0}u^{*}_{b_{2}}(x)=\frac{2[\mu-\eta]}{\mu}<1,
	\end{align}
	because of $\mu<2\eta$.  Then, by   \eqref{c2.1}, we have that $x_{b_{2}}$ is determined by the condition $u^{*}_{b_{2}}(x_{b_{2}}-)=1$, which is equivalent to $\frac{2[\mu-\eta]}{\mu[1+\delta\bar{\eta}]}\{\delta\bar{\eta}\expo^{[1+\delta\bar{\eta}][M+\eqxo^{-1}(x_{b_{2}})]}+1\}=1$.
	From here, using  \eqref{eqo11.1} and considering that $\eqxo^{-1}$ is the inverse function of $\eqxo$, it gives that  $\bar{z}$ is as in \eqref{c1} and
	\begin{align}
		x_{b_{2}}&=
		\eqxo\bigg(\frac{1}{1+\delta\bar{\eta}}\ln[c_{2,2}]-M\bigg)\label{co5.2}\\
		&=k_{1}c_{2,2}\expo^{-[1+\delta\bar{\eta}]M}+\frac{\bar{\eta}[\mu-\eta]}{[1+\delta\bar{\eta}]^{2}}\ln[c_{2,2}]-\frac{\bar{\eta}[\mu-\eta]}{1+\delta\bar{\eta}}\frac{\delta\bar{\eta}}{1+\delta\bar{\eta}}\notag\\
		&=\frac{c_{2,1}}{1+\delta\bar{\eta}}\ln[c_{2,2}] +\frac{\bar{\eta}[2\eta-\mu]}{2[1+\delta\bar{\eta}]}=\bar{x}.\notag
	\end{align}
	Recall that $c_{2,2}$ and $k_{1}$ are as in \eqref{c4} and \eqref{eq11.2}, respectively.  On the other hand, the NLPD equation \eqref{eq4.3} admits the following solution
	\begin{equation*}
		v_{\gamma,\br}(x)=
		\begin{cases}
			c_{1}\expo^{\theta_{-}[x-x_{b_{2}}]}+c_{2}\expo^{\theta_{+}[x-x_{b_{2}}]}&\text{for}\ x\in(x_{b_{2}},b_{2}),\\
			\vspace{-0.4cm}&\\
			c_{3}\expo^{\lambda_{\gamma}[x-b_{2}]}+\frac{\gamma }{\gamma+\delta}\big[x-b_{2}+v_{\gamma,\br}(b_{2})+\frac{\eta}{\delta+\gamma}\big]&\text{for}\ x\in(b_{2},\infty),
		\end{cases}
	\end{equation*}
	where $c_{1},c_{2},c_{3}$ are  free constants. By \eqref{co5.2}, notice that
	\begin{align*}
		v_{\gamma,\br}(x_{b_{2}}-)&=
		\frac{\expo^{-\eqxo^{-1}(x_{b_{2}})}\bar{\eta}[\mu-\eta]}{1+\delta\bar{\eta}}\Big[\expo^{[1+\delta\bar{\eta}][M+\eqxo^{-1}(x_{b_{2}})]}-1\Big]=\expo^{M}\frac{c_{2,2}^{-1/[1+\delta\bar{\eta}]}[2\eta-\mu]}{2\delta}, \\
		v'_{\gamma,\br}(x_{b_{2}}-)&=
		\expo^{-\eqxo^{-1}(x_{b_{2}})}=\expo^{M}c_{2,2}^{-1/[1+\delta\bar{\eta}]}.
	\end{align*}
	Meanwhile, $v_{\gamma,\br}(x_{b_{2}}+)=c_{1}+c_{2}$ and $v'_{\gamma,\br}(x_{b_{2}}+)=c_{1}\theta_{-}+c_{2}\theta_{+}$. In order that $v_{\gamma,\br}$ is $\hol^{1}$-continuous at $x_{b_{2}}$,  by the smooth fit,  we have the following system of equations
	\begin{align*}
		&c_{1}+c_{2}=\expo^{M}\frac{c_{2,2}^{-1/[1+\delta\bar{\eta}]}[2\eta-\mu]}{2\delta} \quad\text{and}\quad c_{1}\theta_{-}+c_{2}\theta_{+}=\expo^{M}c_{2,2}^{-1/[1+\delta\bar{\eta}]},
	\end{align*}
	whose solution is given by $c_{1}=-\frac{\expo^{M}c_{2,2}^{-1/[1+\delta\bar{\eta}]}}{\theta_{+}-\theta_{-}}\Big[1-\theta_{+}\frac{[2\eta-\mu]}{2\delta}\Big]$ and  $c_{2}=\frac{\expo^{M}c_{2,2}^{-1/[1+\delta\bar{\eta}]}}{\theta_{+}-\theta_{-}}\Big[1-\theta_{-}\frac{[2\eta-\mu]}{2\delta} \Big]$.
	From here, we have that for $x\in(x_{b_{2}},b_{2})$,  $v_{\gamma,\br}(x)=\expo^{M}c_{2,2}^{-1/[1+\delta\bar{\eta}]}h_{2}(x-x_{b_{2}})$,
	where  $h_{2}$ is defined in \eqref{c2}.  Since $v_{\gamma,\br}(b_{2}-)=v_{\gamma,\br}(b_{2}+)$, we have that
	\begin{align*}
		&c_{3}=\frac{\expo^{M}\delta c_{2,2}^{-1/[1+\delta\bar{\eta}]}}{\delta+\gamma}h(b_{2}-x_{b_{2}})-\frac{\eta\gamma}{[\delta+\gamma]^{2}}.
	\end{align*}
	Finally, taking into account that $v'_{\gamma,\br}(b_{2}-)=v'(b_{2}+)$ must hold, it gives that
	\begin{align*}
		&\expo^{M}c_{2,2}^{-1/[1+\delta\bar{\eta}]}h'(b_{2}-x_{b_{2}})=\lambda_{\gamma}\bigg\{\frac{\expo^{M}\delta c_{2,2}^{-1/[1+\delta\bar{\eta}]}}{[\delta+\gamma]}h(b_{2}-x_{b_{2}})-\frac{\eta\gamma}{[\delta+\gamma]^{2}}\bigg\}+\frac{\gamma }{\gamma+\delta}.
	\end{align*}	
	Solving the previous equation w.r.t. $M$, it yields that $M=M(b_{2})$, where $M(b_{2})$ is given by \eqref{c5}, which depends on $b_{2}$. Therefore, \eqref{c6} is a solution to the NLPDS \eqref{eq4.1}--\eqref{eq4.3}. By the construction seen above, we know that $v_{\gamma,\br}$ is in $\hol^{1}((0,\infty))\cap \hol^{2}((0,\infty)\setminus\{x_{b_{2}},b_{2}\})$. However, since $v_{\gamma,\br}$ is $\hol^{1}$-continuous on $(0,\infty)$ and satisfies \eqref{eq4.1}--\eqref{eq4.2} in the neighbourhoods of $x_{b_{2}}$ and $b_{2}$, it follows $v''_{\gamma,\br^{\gamma}}$  is continuous at $x_{b_{2}}$ and at $b_{2}$. 
\end{proof}


\begin{proof}[Proof of Proposition \ref{T2}]
	We know that $b^{\gamma}_{2}$ is given by \eqref{b1}, due to $\mu<2\eta$ and $\gamma>\gamma_{1}$. Consider $v_{\gamma,\br^{\gamma}}$ as in \eqref{c6} (when $b_{2}=b^{\gamma}_{2}$). By  the construction of $v_{\gamma,\br}$ on $(0,\bar{x})$, with $\bar{x}$ as in \eqref{c3}, as detailed in the proof of Proposition \ref{pr2}, it is   known that    $v_{\gamma,\br^{\gamma}}$ is concave and increasing on $(0,\bar{x})$. Since $h'_{2}>0$ on $(0,\infty)$, $v_{\gamma,\br^{\gamma}}$ is increasing on $(\bar{x},b_{2}^{\gamma})$. To verify that $v_{\gamma,\br^{\gamma}}$ is concave on $(b^{\gamma}_{2},\bar{x})$, by similar arguments seen in the proof of Proposition \ref{T1}, we only need to verify that $h''_{2}(b^{\gamma}_{2}-\bar{x})<h''_{2}(x_{2}-\bar{x})=0$, where $x_{2}\eqdef \bar{x}+\frac{1}{\theta_{+}-\theta_{-}}\ln\Big[\frac{a_{2}\theta_{-}^{2}}{a_{1}\theta_{+}^{2}}\Big]$ is the point where $h_{2}'(x-\bar{x})$ attains its global minimum. Since $h_{2}$ satisfies \eqref{s2.0}, it follows that  $h''_{2}(b^{\gamma}_{2}-\bar{x})=\frac{2h'_{2}(b^{\gamma}_{2}-\bar{x})}{\sigma^{2}}\big[\frac{1}{\lambda_{\gamma}}\big[\delta+\frac{\gamma\eta\lambda_{\gamma}}{\gamma+\delta}\big]-\eta\big]
	=\frac{2\delta h'_{2}(b^{\gamma}_{2}-\bar{x})}{\sigma^{2}}\big[\frac{1}{\lambda_{\gamma}} -\frac{\eta}{\gamma+\delta}\big]<0$. Notice that $v_{\gamma,\br^{\gamma}}$ is increasing and concave on $(b^{\gamma}_{2},\infty)$, because of  $c_{2,3}(b^{\gamma}_{2})<0$.
	
	By the seen before, $v'_{\gamma,\br^{\gamma}}$ is decreasing on $(0,\infty)$. Furthermore, since $b^{\gamma}_{2}$ as in \eqref{b1} satisfies \eqref{eq13}, it immediately follows that   $v'_{\gamma,\br^{\gamma}}(b^{\gamma}_{2}-)=\frac{\frac{\gamma}{\gamma+\delta}\big[1-\frac{\eta\lambda_{\gamma}}{\gamma+\delta}\big]}{1-\frac{\delta\lambda_{\gamma}}{\gamma+\delta}g_{2}(b^{\gamma}_{2}-\bar{x})}=1$.
	Thus,  $b^{\gamma}_{2}=\inf\{x>0:v'_{\gamma,\br^{\gamma}}(x)<1\}<\infty$. Let us prove now that $\tmf(\cdot;v_{\gamma,\br^{\gamma}})$  is decreasing on $(0,\infty)$. Calculating the first and second derivatives of $v_{\gamma,\br^{\gamma}}$, and taking into account \eqref{c5.2} and \eqref{s2.0}, it gives  
	\begin{align*}
		\tmf(x;v_{\gamma,\br^{\gamma}})&=
		\begin{cases}
			-\eqxo'(\eqxo^{-1}(x))&\text{if}\ x\in(0,\bar{x}),\\
			\vspace{-0.4cm}&\\
			\frac{\sigma^{2}}{2[\delta g_{2}(x)-\eta]}&\text{if}\ x\in(\bar{x},b^{\gamma}_{2}),\\
			\vspace{-0.4cm}&\\
			\frac{\gamma\expo^{-\lambda_{\gamma}[x-b^{\gamma}_{2}]}}{c_{2,3}(b^{\gamma}_{2})\lambda_{\gamma}^{2}[\gamma+\delta]}+\frac{1}{\lambda_{\gamma}}&\text{if}\ x\in(b^{\gamma}_{2},\infty).\\
		\end{cases}
	\end{align*}
	Using similar arguments seen in the Proof of Proposition \ref{T1}, it can be seen that $\tmf(\cdot;v_{\gamma,\br^{\gamma}})$ is decreasing on $(0,\infty)$. It implies that $-\frac{\mu }{\sigma^{2} }\tmf(\cdot;v_{\gamma,\br^{\gamma}})$ is increasing on $(0,\infty)$ and $-\frac{\mu}{\sigma^{2}}\tmf(0+;v_{\gamma,\br^{\gamma}})=\frac{2[\mu-\eta]}{\mu}<1$. Therefore $b^{\gamma}_{1}=\bar{x}$. Since $v_{\gamma,\br^{\gamma}}$ satisfies the hypotheses of  Lemma \ref{lmax1}, we conclude that $v_{\gamma,\br^{\gamma}}$ is a solution to the HJB equation \eqref{eq1}.
\end{proof}

\subsection{Proofs of Proposition \ref{pr3} and \ref{T3}}\label{proof3}

\begin{proof}[Proof of Proposition \ref{pr3}]
	Let $b_{2}\in(0,\underbar{$b$})$, with $\underbar{$b$}$ as in \eqref{e5.4}. Let us assume that $b_{2}<x_{b_{2}}=b_{1}$, which will be proven later on.   Here, $x_{b_{2}}$ is a parameter to be determined.  By the seen in the proof of Proposition \ref{pr2}, we know that  $v_{\gamma,\br}(x)=c_{2,1}\expo^{-\eqxo^{-1}(x)}\big[\expo^{[1+\delta\bar{\eta}][M+\eqxo^{-1}(x)]}-1\big]$ for $x\in(0,b_{2})$,
	is a solution  to \eqref{eq4.1} on $(0,b_{2})$ where $\eqxo^{-1}$ is the inverse function of $\eqxo:[-M,-M_{2}]\longrightarrow[0,b_{2}]$ which is given by \eqref{eqo11.1} with $k_{2}$ and $k_{1}$ as in \eqref{eq11.2.1} and \eqref{eq11.2}, respectively.  Here, $M$ and $M_{2}$ (with $ M_{2}\leq M$) are unknown parameters where $\eqxo(-M)=0$ and $\eqxo(-M_{2})=b_{2}$.  So, we shall construct the solution to \eqref{eq4.2} on $(b_{2},x_{b_{2}})$. Denoting  $\bar{v}$ as a solution to 
	\begin{align}\label{eq9.0}
		&-\frac{\mu^{2}[ v'(x)]^{2}}{2\sigma^{2} v''(x)}-[\mu-\eta]\bar{v}'(x) -[\delta+\gamma]  v(x)+\gamma x=0 \quad\text{for}\ x\in(b_{2},x_{b_{2}}),
	\end{align}
	we see easily that 
	\begin{equation}\label{eq9.2}
		v_{\gamma,\br}(x)=\bar{v}(x)+\frac{\gamma[v_{\gamma,\br}(b_{2})-b_{2}]}{\delta+\gamma}
	\end{equation}
	is a solution to \eqref{eq4.1} for $x\in(b_{2},x_{b_{2}})$. Assuming concavity of $\bar{v}$, which will be proven later on, it is known that  there exists $\eqxt:\R\longrightarrow[0,\infty)$ satisfying $-\ln(\bar{v}'(\eqxt(z)))=z$. Then,  \eqref{equ11.2} holds. Substituting  \eqref{equ11.2} in \eqref{eq9.0},  it follows that
	\begin{align}\label{eq10}
		&\frac{\mu^{2}\eqxt'(z) \expo^{-z}}{2\sigma^{2}}-[\mu-\eta]\expo^{-z} -[\delta+\gamma] \bar{v}(\eqxt(z))+\gamma\eqxt(z)=0.
	\end{align}
	From here and following again the ideas provided by \cite{T2000} to solve \eqref{eq10}, we have that 
	for $x\in(b,x_{b_{2}})$
	\begin{align}\label{e3}
		\bar{v}(x)
		&=\frac{\expo^{-\eqxt^{-1}(x)}}{\delta+\gamma}\bigg\{\frac{\mu^{2}g(\expo^{\eqxt^{-1}(x)})\expo^{\eqxt^{-1}(x)} }{2\sigma^{2}}\notag\\
		&\quad\times\big[k_{3}-\bar{\eta}[\mu-\eta]H_{M_{2}}(\expo^{\eqxt^{-1}(x)})\big]-[\mu-\eta]\bigg\} +\frac{\gamma x}{\delta+\gamma}
	\end{align}
	is a solution to \eqref{eq9.0}, where $k_{3}$ is a parameter to be determined, and $g$ is the PDF of a gamma distribution given by \eqref{g1}, and 
	\begin{equation}\label{e5.5}
		\eqxt(z)=\bar{f}_{M_{2}}(\expo^{z})+b_{2}
	\end{equation}
	is a solution to 
		\begin{align}\label{eq11}
		&\eqxt''(z)-\{1+\bar{\eta}[\delta+\gamma]-\gamma\bar{\eta}\expo^{z}\}\eqxt'(z)+\bar{\eta}[\mu-\eta]=0,
	\end{align}
	with  $H_{M_{2}}$  as in \eqref{e5.1} when $\beta=M_{2}$, and let us take here 
	\begin{align}
		\bar{f}_{M_{2}}(z)&\eqdef\int_{\expo^{-M_{2}}}^{z}\big\{-\bar{\eta}[\mu-\eta]g(y)H_{M_{2}}(y)+k_{3}g(y)\big\}\der y\notag\\
		&=k_{3}[G(z)-G(\expo^{-M_{2}})]-\bar{\eta}[\mu-\eta]\bigg[G(z)H_{M_{2}}(z)-\int_{\expo^{-M_{2}}}^{z}\frac{G(y)}{y^{2}g(y)}\der y\bigg].\label{e6}
	\end{align}
	Recall that $G$ represents the gamma accumulative distribution  function of $g$. By the assumption of concavity of $\bar{v}$, it implies that $\eqxt^{-1}$ is an increasing function on $[0,\infty)$.  From here  and since $\expo^{z} $ is also increasing, we have that $\bar{f}^{-1}_{M_{2}}$ is an  increasing function. 
	
	Taking $M_{2}\geq0$ such that   $\eqxt(-M_{2}+)=b_{2}$ and $v'(\eqxt(-M_{2}+))=\expo^{M_{2}}$, the function $\eqxt$ is  from $[-M_{2},\bar{z}_{1})$ to $[b_{2},x_{b_{2}})$,  with $\bar{z}_{1}>-M_{2}$ such that $\eqxt(\bar{z}_{1})=x_{b_{2}}$. Notice that 
	\begin{align}\label{eqo11.0}
		\eqxt'(z)
		&=\expo^{z}g(\expo^{z})\bigg[k_{3}-\bar{\eta}[\mu-\eta]\int_{-M_{2}}^{z}\frac{1}{\expo^{y}g(\expo^{y})}\der y\bigg].
	\end{align}
	Applying \eqref{e3} in  \eqref{eq9.2}, it follows that for $x\in(b_{2},x_{b_{2}})$,
	\begin{align*}
		v_{\gamma,\br}(x)&=\frac{\expo^{-\eqxt^{-1}(x)}}{\delta+\gamma}\bigg\{\frac{\mu^{2}g(\expo^{\eqxt^{-1}(x)})\expo^{\eqxt^{-1}(x)} }{2\sigma^{2}}\notag\\
		&\quad\times[k_{3}-\bar{\eta}[\mu-\eta]H_{M_{2}}(\expo^{\eqxt^{-1}(x)})]-[\mu-\eta]\bigg\} +\frac{\gamma[v_{\gamma,\br}(b_{2})+x-b_{2}]}{\delta+\gamma}
	\end{align*}
	is a solution  to \eqref{eq4.1}.  Since $v_{\gamma,\br}'$ must be continuous at $b_{2}$, by the smooth fit, \eqref{equ11.2},  \eqref{c5.2},  and \eqref{e5.5}--\eqref{e6},  we get  that $\expo^{-\eqxo^{-1}(b_{2})}=\expo^{-\eqxt^{-1}(b_{2})}=\expo^{M_{2}}$, where $\eqxt^{-1}$ is the inverse function of $\eqref{eqo11.0}$. Thus,
	\begin{align}\label{4}
		\eqxo(-M_{2})=b_{2}
		&\quad\Longleftrightarrow\quad \bigg[\frac{\delta\bar{\eta}}{1+\delta\bar{\eta}}\bigg]\expo^{[1+\delta\bar{\eta}][M-M_{2}]}+M-M_{2}=\frac{b_{2}}{c_{2,1}}+\frac{\delta\bar{\eta}}{1+\delta\bar{\eta}},
	\end{align}
	where $c_{2,1}$ is as in \eqref{c4}. On the other hand, in order that $v_{\gamma,\br}$ is $\hol^2$-continuous at $b_{2}$, by the smooth fit again and using \eqref{c5.2} and \eqref{eq11.2}, we have that $v_{\gamma,\br}''(b_{2}-)=-\frac{\expo^{M_{2}}}{\eqxo'(-M_{2})}=-\frac{\expo^{M_{2}}}{\eqxt'(-M_{2})}=v_{\gamma,\br}''(b_{2}+)$. From here,  and by \eqref{co4} and \eqref{eqo11.0}, it follows that 
	\begin{align}\label{4.0}
		k_{3}
		&=\frac{c_{2,1}}{\expo^{-M_{2}}g(\expo^{-M_{2}})}\big[\delta\bar{\eta}\expo^{[M-M_{2}][1+\delta\bar{\eta}]}+1\big].
	\end{align}
	A  solution to the equation \eqref{eq4.3}, is given by \eqref{max1}, with $\max\{x_{b_{2}},b_{2}\}=x_{b_{2}}$ and $c_{4}=0$, where $c_{3}$ is a   free constant. By the smooth fit,  it  follows that
	\begin{align*}
		v'_{\gamma,\br}(x_{b_{2}}-) =c_{3}\lambda_{\gamma}+\frac{\gamma }{\gamma+\delta}\quad\text{and}\quad
		v''_{\gamma,\br^{\gamma}}(x_{b_{2}}-)=c_{3}\lambda_{\gamma}^{2}.
	\end{align*}
	Since $v'_{\gamma,\br}(x_{b_{2}}-)=\expo^{-\eqxt^{-1}(x_{b_{2}})}=\frac{1}{\bar{f}^{-1}_{M_{2}}(x_{b_{2}}-b_{2})}$ (because of $\eqxt^{-1}(x)=\ln[\bar{f}^{-1}_{M_{2}}(x-b_{2})]$), and considering that $x_{b_{2}}$ is a point where $-\frac{\mu v'_{\gamma,\br}(x_{b_{2}})}{\sigma^{2}v''_{\gamma,\br}(x_{b_{2}})}=1$, we have the following system of equations
	\begin{align}\label{e4.1}
		\frac{1}{\hat{\alpha}}=c_{3}\lambda_{\gamma}+\frac{\gamma }{\gamma+\delta}\quad\text{and}\quad-\frac{\mu}{\sigma^{2}}\frac{1}{\hat{\alpha}}=c_{3}\lambda_{\gamma}^{2}.
	\end{align}
	with $\hat{\alpha}=\bar{f}^{-1}_{M_{2}}(x_{b_{2}}-b_{2})$. Then, solving the system \eqref{e4.1}, we see  that $\hat{\alpha}=\alpha_{\gamma}$ and $c_3=c_{3,2}(\alpha_{\gamma})$ are as in \eqref{eq5.3} and \eqref{c10}, respectively.
	It implies that $x_{b_{2}}=\bar{f}_{M_{2}}(\alpha_{\gamma})+b_{2}$.
	On the other hand, since $u^{*}_{b_{2}}(x_{b_{2}})=\frac{\mu }{\sigma^{2}}\eqxt'(\eqxt^{-1}(x_{b_{2}}))=1$ must be occurred, it gives that
	\begin{align}\label{4.1}
		&\frac{\mu}{\sigma^{2}}\expo^{\eqxt^{-1}(x_{b_{2}})}g(\expo^{\eqxt^{-1}(x_{b_{2}})})\{k_{3}-\bar{\eta}[\mu-\eta]H_{M_{2}}(\expo^{\eqxt^{-1}(x_{b_{2}})})\}=1\notag\\
		&\Longleftrightarrow\quad
		k_{3}=\frac{\sigma^{2}}{\mu\alpha_{\gamma} g(\alpha_{\gamma})}+\bar{\eta}[\mu-\eta]H_{M_{2}}(\alpha_{\gamma}).
	\end{align}	
	Notice that $\bar{z}_{1}=\ln[\alpha_{\gamma}]$. Substituting \eqref{4.0} in \eqref{4.1}, we obtain that
	\begin{align}	\label{4.2}
		\begin{split}
			&M-M_{2}=\frac{1}{1+\delta\bar{\eta}}\ln\bigg[\frac{1}{\delta\bar{\eta}}\bigg\{\frac{\expo^{-M_{2}}g(\expo^{-M_{2}})}{c_{2,1}}\bigg[\frac{\sigma^{2}}{\mu\alpha_{\gamma} g(\alpha_{\gamma})}+\bar{\eta}[\mu-\eta]H_{M_{2}}(\alpha_{\gamma})\bigg]-1\bigg\}\bigg].
		\end{split}
	\end{align}
	By \eqref{eq5.3} and \eqref{4.2} we  see that $\overline{M}_{\gamma}$ is given by \eqref{c10}. Applying \eqref{4.2} in \eqref{4}, \eqref{e11} holds.
	If there is a unique  $M_{2}$  solving \eqref{e11},  by the seen before, we conclude the results in Proposition \ref{pr3}.  
	
	{\it Concavity property of $v_{\gamma,\br}$.} Let us now verify that $v_{\gamma,\br}$ is concave on $(0,\infty)$. For this, it is sufficient to check that $v''_{\gamma,\br}$ is non-positive on $(0,\infty)$, which is true on $(0,b_{2})$ because of \eqref{c5.2} and $\eqxo'\geq0$ on $(-M,-M_{2})$. To verify that $v''_{\gamma,\br}\leq0$ on $(b_{2},x_{b_{2}})$, by \eqref{equ11.2}, it is sufficient to show that 
	\begin{align}	\label{4.3}
		\eqxt'(\eqxt^{-1}(x))\geq0\quad\text{on}\ (b_{2},x_{b_{2}}). 
	\end{align}	 
	Taking $k_{3}$ as in \eqref{4.1}  and substituting this in \eqref{eqo11.0}, it gives that 
	\begin{align*}
		\eqxt'(\eqxt^{-1}(x))=\expo^{\eqxt^{-1}(x)}g(\expo^{\eqxt^{-1}(x)})\bigg[\frac{\sigma^{2}}{\mu\alpha_{\gamma} g(\alpha_{\gamma})}+\bar{\eta}[\mu-\eta]\{H_{M_{2}}(\alpha_{\gamma})-H_{M_{2}}(\expo^{\eqxt^{-1}(x)})\}\bigg].
	\end{align*}
	From here it is clear that \eqref{4.3} is true, due to $H_{M_{2}}(\expo^{\eqxt^{-1}(x)})<H_{M_{2}}(\alpha_{\gamma})$ for $x\in(b_{2},x_{b_{2}})$. Since $c_{3,2}(\alpha_{\gamma})$ given in \eqref{c10} is negative, we conclude that $v''_{\gamma,\br}<0$ on $(x_{b_{2}},\infty)$. 
	
	{\it Increasing of $u^{*}_{b_{2}}$ on $(0,x_{b_{2}})$.} By the seen in the proof of Proposition \ref{T2}, it is clear that $u^{*}_{b_{2}}=-\frac{\mu v'_{\gamma,\br}}{\sigma^{2}v''_{\gamma,\br}}$ is increasing on $(0,b_{2})$ satisfying   \eqref{co5.1}. Notice that
	\begin{align}\label{A.35}
		u^{*}_{b_{2}}(x)&=\frac{\mu}{\sigma^2}\eqxt'(\eqxt^{-1}(x))\notag\\
		&=\frac{\mu}{\sigma^{2}}\expo^{\eqxt^{-1}(x)}\bar{f}'_{M_{2}}(\expo^{\eqxt^{-1}(x)})=\frac{\mu}{\sigma}\bar{f}^{-1}_{M_{2}}(x-b_{2})\bar{f}'_{M_{2}}(\bar{f}^{-1}_{M_{2}}(x-b_{2})).
	\end{align}	
	Then, to verify that $u^{*}_{b_{2}}$ is increasing on $(b_{2},x_{b_{2}})$, it is enough to check that ${\tmf}_{2}(x)\eqdef\frac{\mu}{\sigma^{2}}x\bar{f}'_{M_{2}}(x)$ is increasing on $(\expo^{-M_{2}},\alpha_{\gamma})$, since $u^{*}_{b_{2}}(x)={\tmf}_{2}(\bar{f}^{-1}_{M_{2}}(x-b_{2}))$, and  $\bar{f}^{-1}_{M_{2}}(x-b_{2})$	 is increasing on $(b_{2},x_{b_{2}})$.  Observe that ${\tmf}_{2}\geq0$ on $(\expo^{-M_{2}},\alpha_{\gamma})$ and ${\tmf}_{2}(\alpha_{\gamma}-)=1$. Furthermore,
	\begin{align}\label{4.5}
		{\tmf}'_{2}(x)=\frac{1}{x}\bigg\{[\bar{\eta}[\delta+\gamma]-\bar{\eta}\gamma x+1]{\tmf}_{2}(x)-\frac{\mu\bar{\eta}[\mu-\eta]}{\sigma^{2}}\bigg\},
	\end{align}
	because of 
	\begin{align}\label{4.6}
		xg'(x)+g(x)=g(x)\{\bar{\eta}[\delta+\gamma]-\bar{\eta}\gamma x+1\}. 
	\end{align}
	From here ${\tmf}''_{2}(x)=\frac{1}{x}\big\{[\bar{\eta}[\delta+\gamma]-\bar{\eta}\gamma x]{\tmf}'_{2}(x)-\gamma {\tmf}_{2}(x)\big\}$. Assuming there exists a point $x^{*}\in(\expo^{-M_{2}},\alpha_{\gamma})$ such that ${\tmf}'_{2}(x^{*})=0$, it gives $	{\tmf}''_{2}(x^{*})=-\frac{\gamma {\tmf}_{2}(x^{*})}{x^{*}}<0$, which implies that there are no local minimums on $(\expo^{-M_{2}},\alpha_{\gamma})$. Hence, ${\tmf}_{2}$ is concave on $(\expo^{-M_{2}},\alpha_{\gamma})$. Moreover, letting $x\uparrow\alpha_{\gamma}$ in \eqref{4.5}, we see that
	\begin{align}\label{A.38}
		{\tmf}'_{2}(\alpha_{\gamma}-)=\frac{1}{\alpha_{\gamma}}\bigg\{\bar{\eta}[[\delta+\gamma]-\gamma \alpha_{\gamma}]+1-\frac{\mu\bar{\eta}[\mu-\eta]}{\sigma^{2}}\bigg\}>0,
	\end{align}
	due to $1-\frac{\mu\bar{\eta}[\mu-\eta]}{\sigma^{2}}>0$ and  $\bar{\eta}[[\delta+\gamma]-\gamma \alpha_{\gamma}]=-\frac{2}{\mu\lambda_{\gamma}}>0$. Therefore, ${\tmf}_{2}$ is increasing on $(\expo^{-M_{2}},\alpha_{\gamma})$, concluding that $u^{*}_{b_{2}}$ is increasing  on $(0,x_{b_{2}})$, and  $u^{*}_{b_{2}}<1$ on  $(0,x_{b_{2}})$.

	{\it Decreasing  of $\bar{g}$ on $(0,\underbar{$b$})$.}  To finish, let us determine the solvability   of \eqref{e11}. For this, it is sufficient to check that $\bar{g}$ defined in \eqref{e5.6} is  positive and decreasing on $(0,\underbar{$b$})$, concluding that for each $b_{2}>0$ satisfying \eqref{e5.4}, there exists a unique $M_{2}>0$ such that \eqref{e11} is true. For this aim, let us first check   that the function  ${\tmf}_{3}(\beta)\eqdef \frac{\expo^{-\beta}g(\expo^{-\beta})}{c_{2,1}}c_{3,1}(\beta)$, for $\beta>0$,
	is positive and decreasing. Calculating the first  and second derivatives  and taking into account \eqref{4.6}, it gives that
	\begin{align*}
		\frac{\der}{\der\beta}{\tmf}_{3}(\beta)
		&={\tmf}_{4}(\beta)\bigg[\frac{\bar{\eta}[\mu-\eta]}{c_{2,1}{\tmf}_{4}(\beta)}-{\tmf}_{3}(\beta)\bigg],\notag\\
		\frac{\der^{2}}{\der\beta^{2}}{\tmf}_{3}(\beta)&=-\bar{\eta}\gamma\expo^{-\beta}{\tmf}_{3}(\beta)-{\tmf}_{4}(\beta)\frac{\der}{\der\beta}{\tmf}_{3}(\beta),
	\end{align*}
	with ${\tmf}_{4}(\beta)\eqdef\bar{\eta}[\delta+\gamma]+1-\bar{\eta}\gamma\expo^{-\beta}$. Notice that  $\beta\mapsto \frac{\bar{\eta}[\mu-\eta]}{c_{2,1}{\tmf}_{4}(\beta)}$ is decreasing and positive on $(0,\infty)$, satisfying $\lim_{\beta\downarrow0}\frac{\bar{\eta}[\mu-\eta]}{c_{2,1}{\tmf}_{4}(\beta)}=1$ and $\lim_{\beta\uparrow\infty}\frac{\bar{\eta}[\mu-\eta]}{c_{2,1}{\tmf}_{4}(\beta)}=\frac{\bar{\eta}[\mu-\eta]}{c_{2,1}[\bar{\eta}[\delta+\gamma]+1]}$.
	Meanwhile, observe that  ${\tmf}_{3}(0+)>1+\delta\bar{\eta}> \lim_{\beta\downarrow0}\frac{\bar{\eta}[\mu-\eta]}{c_{2,1}{\tmf}_{4}(\beta)}$ due to \eqref{e12}, and by L'H\^opital's rule  $ \lim_{\beta\downarrow0}{\tmf}_{3}(\beta)=\frac{\bar{\eta}[\mu-\eta]}{c_{2,1}[\bar{\eta}[\delta+\gamma]+1]}$. It implies that ${\tmf}_{3}$ is decreasing on $(0,\bar{\beta})$, for some $\bar{\beta}>0$. On the other hand, assuming there exists a point $\beta^{*}\in(\bar{\beta},\infty)$ such that $\tmf'_{3}(\beta^{*})=0$, it gives $	\tmf''_{3}(\beta^{*})=-\bar{\eta}\gamma\expo^{-\beta^{*}} \tmf_{3}(\beta^{*})<0$, which implies that there are no local minimums on $(\beta^{*},\infty)$. Thus, ${\tmf}_{3}$ is decreasing on $(0,\infty)$. Since there exists a unique point $\bar{\beta}$ such that  ${\tmf}_{3}(\beta)>1$, for $\beta\in(0,\bar{\beta})$, and ${\tmf}_{3}(\bar{\beta})=0$, it gives that $ \lim_{\beta\uparrow\bar{\beta}}\bar{g}(\beta)=-\infty$, and thus $\bar{b}<\infty$. Calculating the first derivatives and considering that $\frac{\der}{\der\beta}{\tmf}_{3}(\beta)<0$ for $\beta\in(0,\infty)$, we have $\frac{\der}{\der\beta}\bar{g}(\beta)=\frac{\der}{\der\beta}{\tmf}_{3}(\beta)\big(1+\frac{1}{{\tmf}_{3}(\beta)-1}\big)<0,\quad\text{for}\ \beta\in(0,\bar{\beta})$.
	Therefore $\bar{g}$ is decreasing and positive on $(0,\bar{\beta})$.
\end{proof}

\begin{proof}[Proof of Proposition \ref{T3}]
	Considering $\gamma\in(\gamma_{2},\gamma_1)$, it is known \eqref{e12} is true due to Remark \ref{rem3.13}. Then, taking $b^{\gamma}_{2}$  and $\overline{M}_{\gamma}$ as in \eqref{b_3} and \eqref{d1} respectively, by the proof of Proposition \ref{pr3}, we get that $v_{\gamma,\br^{\gamma}}$ given as in \eqref{c8}, when $b_{2}=b^{\gamma}_{2}$, is  increasing and concave on $(0,\infty)$, which is a solution  to \eqref{eq4.1}--\eqref{eq4.3}. Furthermore, $v'_{\gamma,\br^{\gamma}}(0+)>1$, because of $\overline{M}_{\gamma}>0$. Therefore $ v'_{\gamma,\br^{\gamma}}(b^{\gamma}_{2})=1$. Calculating the first and second derivatives of $v_{\gamma,\br^{\gamma}}$, and taking into account \eqref{c5.2} and \eqref{equ11.2}, it gives  
	\begin{align*}
		\tmf(x;v_{\gamma,\br^{\gamma}})&=
		\begin{cases}
			-\eqxo'(\eqxo^{-1}(x))&\text{if}\ x\in(0,b^{\gamma}_{2}),\\
			\vspace{-0.5cm}&\\
			-\eqxt'(\eqxt^{-1}(x))&\text{if}\ x\in(b^{\gamma}_{2},\bar{x}),\\
			\vspace{-0.5cm}&\\
			\frac{\gamma\expo^{-\lambda_{\gamma}[x-b^{\gamma}_{2}]}}{c_{3,2}(\alpha_{\gamma})\lambda_{\gamma}^{2}[\gamma+\delta]}+\frac{1}{\lambda_{\gamma}}&\text{if}\ x\in(\bar{x},\infty).\\
		\end{cases}
	\end{align*}
	By the seen in the proofs of Propositions \ref{T2} and \ref{pr3}, it is easy to check that $\tmf(\cdot;v_{\gamma,\br^{\gamma}})$ is decreasing on $(0,\infty)$. It implies that $-\frac{\mu }{\sigma^{2} }\tmf(\cdot;v_{\gamma,\br^{\gamma}})$ is increasing on $(0,\infty)$ and $-\frac{\mu}{\sigma^{2}}\tmf(0+;v_{\gamma,\br^{\gamma}})<1$. Therefore $b^{\gamma}_{1}=x_{b^{\gamma}_{2}}$, with $x_{b^{\gamma}_{2}}$ as in \eqref{c10}. Since $v_{\gamma,\br^{\gamma}}$ satisfies the hypotheses of  Lemma \ref{lmax1}, we conclude that $v_{\gamma,\br^{\gamma}}$ is a solution to the HJB equation \eqref{eq1}.
	
	Now, let us check the validity of Proposition \ref{T3},  Item (ii). For each $\gamma\in(0,\gamma_{2})$ fixed, let us take $b_{2}=0<x_{0}$, where $x_{0}$ is unknown parameter.  By the arguments similar to \eqref{eq10}--\eqref{e3}, we derive  that  $v_{\gamma,0}(x)=\bar{v}(x)$, with $\bar{v}$ as in \eqref{e3},
	serves as a solution to \eqref{eq4.1}  when $b_{2}=0$ and $b_{1}=x_{0}$. Reimplacing $M_{2}$ by  $-\widehat{M}_{\gamma}$ in \eqref{e6}  we get that $\eqxt:[\widehat{M}_{\gamma},\bar{z}]\longrightarrow[0,x_{0}]$ is given  by  $\eqxt(z)=\hat{f}_{-\widehat{M}_{\gamma}}(\expo^{z})$, with $\hat{f}_{-\widehat{M}_{\gamma}}(z)=k_{3}[G(z)-G(\expo^{\widehat{M}_{\gamma}})]-\bar{\eta}[\mu-\eta]\Big[G(z)H_{-\widehat{M}_{\gamma}}(z)-\int_{\expo^{\widehat{M}_{\gamma}}}^{z}\frac{G(y)}{y^{2}g(y)}\der y\Big]$,
	which is a solution to \eqref{eq11}. The constants $\widehat{M}_{\gamma}$, $\bar{z}$ and $k_{3}$ are unknown positive parameters. Notice that \eqref{equ11.2} is satisfied.  Since $v_{\gamma,0}(0)=0$, it gives that 
	\begin{equation}\label{v_02}
		k_{3}=\frac{\bar{\eta}[\mu-\eta]}{\expo^{\widehat{M}_{\gamma}}g(\expo^{\widehat{M}_{\gamma}})}.
	\end{equation}
	On the other hand, observing that $v_{\gamma,0}$ must satisfy \eqref{eq4.3}, we have that $v_{0,\gamma}$ is as in \eqref{max1}, with $\max\{x_{0},b_{2}\}=x_{0}$, $b_{2}=0$, $v_{\gamma,0}(b_{2})=0$  and $c_{4}=0$. Then, since $v'_{\gamma,0}(x_{0}-)=\expo^{-\eqxt^{-1}(x_{0})}=\frac{1}{\hat{f}^{-1}_{-\widehat{M}_{\gamma}}(x_{0})}$ (because of $\eqxt^{-1}(x)=\ln[\hat{f}^{-1}_{-\widehat{M}_{\gamma}}(x)]$), and considering that $x_{0}$ is a point where $-\frac{\mu v'_{\gamma,0}(x_{0})}{\sigma^{2}v''_{\gamma,0}(x_{0})}=1$, we have the  system of equations \eqref{e4.1}, with $\hat{\alpha}=\hat{f}^{-1}_{-\widehat{M}_{\gamma}}(x_{0})$. Solving the system, it yields that $x_{0}=\hat{f}_{-\widehat{M}_{\gamma}}(\alpha_{\gamma})$, $c_{3}=c_{3,2}(\alpha_{\gamma})$ and $\bar{z}=\ln[\alpha_{\gamma}]$. Furthermore, as  $u^{*}_{0}(x_{0})=\frac{\mu }{\sigma^{2}}\eqxt'(\eqxt^{-1}(x_{0}))=1$ must be occurred, we have that 
	\begin{equation}\label{v_01}
		k_{3}=\frac{\sigma^{2}}{\mu\alpha_{\gamma} g(\alpha_{\gamma})}+\bar{\eta}[\mu-\eta]H_{-\widehat{M}_{\gamma}}(\alpha_{\gamma}). 
	\end{equation}
	By applying \eqref{v_01} in \eqref{v_02}, we deduce  that $\widehat{M}_{\gamma}$ must be a solution to  \eqref{eqM1}. To ensure this, notice that $\beta\mapsto\frac{\mu-\eta}{\expo^{\beta}g(\expo^{\beta})}$ is decreasing for $\beta\in(0,\ln[\alpha_{\gamma}])$, because of $	\frac{\der}{\der\beta}\big[\frac{1}{\expo^{\beta}g(\expo^{\beta})}\big]=-\frac{[g(\expo^{\beta})+\expo^{\beta}g'(\expo^{\beta})]}{\expo^{\beta}g(\expo^{\beta})^{2}}=-\frac{\{\bar{\eta}[\delta+\gamma]-\bar{\eta}\gamma\expo^{\beta}+1\}}{\expo^{\beta}g(\expo^{\beta})}$ and  $\bar{\eta}[\delta+\gamma]-\bar{\eta}\gamma\expo^{\beta}+1>0$ for $\beta\in(0,\ln[\alpha_{\gamma}])$.  It can be also  verified easily that $\beta\mapsto H_{-\beta}(\alpha_{\gamma})$ is decreasing on $(0, \beta)$. Moreover, by Remark \ref{rem3.13} and considering that $\mu-\eta<\mu/2$, respectively, it yields that $\frac{\mu-\eta}{g(1)}>\frac{\mu}{2\alpha_{\gamma}g(\alpha_{\gamma})}+[\mu-\eta]H_{0}(\alpha_{\gamma})$ and $\frac{\mu-\eta}{\alpha_{\gamma}g(\alpha_{\gamma})}<\frac{\mu}{2\alpha_{\gamma}g(\alpha_{\gamma})}$. Therefore, there exists a unique $\widehat{M}_{\gamma}>0$ such that \eqref{eqM1} holds.
	
	By employing arguments akin to those seen in Item (i), it is easy to check that $v_{\gamma,0}$ is an increasing and concave solution to \eqref{eq4.1}--\eqref{eq4.3}, when $b_{2}=0$. Notice that  $v'_{\gamma,0}(0+)=\expo^{-\widehat{M}_{\gamma}}\leq1$, because of $\widehat{M}_{\gamma}>0$, leading to $b^{\gamma}_{2}=\inf\{x>0:v'_{\gamma,0}(x)\leq1\}=0$. On the other hand, following the reasoning in  \eqref{A.35}--\eqref{A.38},  it can be demonstrated  that $-\frac{\mu}{\sigma^{2}}\tmf(\cdot;v_{\gamma,0})$ is increasing on $(0,\infty)$, satisfying $-\frac{\mu}{\sigma^{2}}\tmf(0+;v_{\gamma,0})<1$. By invoking Remark \ref{sol1} and the aforementioned arguments, we conclude that the solution $v_{\gamma,0}$ given in \eqref{c8.1} satisfies the HJB equation \eqref{eq1}.
\end{proof}


\begin{thebibliography}{}
	
	
	
	\bibitem[Albrecher et al.$^{a}$(2011)]{ABT2011} 
	Albrecher, H., B\"auerle, N., Thonhauser, S.  (2011).  Optimal dividend-payout in random discrete time.  \emph{Statistics \& Risk Modeling with Applications in Finance and Insurance}, \emph{ 28}(3), 251--276.
	
	\bibitem[Albrecher et al.$^{b}$(2011)]{ACT2011}
	Albrecher, H., Cheung, E. C., \& Thonhauser, S.  (2011).  Randomized observation periods for the compound Poisson risk model: dividends.  \emph{ASTIN Bulletin}, \emph{41}(2), 645--672.
	
	\bibitem[Asmussen \& Taksar(1997)]{ATT1997}
	Asmussen, S. \& Taksar, M. (1997). { Controlled Diffusion Models for Optimal Dividend Payout}  \emph{Insurance: Math. Econ.}, \emph{20}, 1--15.
	
	
	\bibitem[Avanzi et al.(2021)]{ALW2021} 
	{ Avanzi, B., Lau, H.,  \& Wong, B. (2021)} { Optimal periodic dividend strategies for spectrally negative {L}\'evy processes with fixed transaction costs.}  \emph{Scand. Actuar. J.},  (8), {645--670}.
	
	\bibitem[Avanzi et al.(2016)]{ATW2016} 
	{Avanzi, B., Tu, V., \& Wong, B. (2016)}  {A Note on Realistic Dividends in Actuarial Surplus Models.} \emph{Risks}, \emph(4)(4), 37.
	
	\bibitem[Boyarchenko \& Levendorskii(2015)]{BL2015}
	Boyarchenko, S. and Levendorskii, S. (2015). {Poisson bandits of evolving shade}. Available at SSRN: \url{https://ssrn.com/abstract=2649713 or http://dx.doi.org/10.2139/ssrn.2649713}.
	
	\bibitem[Cadenillas et al.(2006)]{CCTZ2006}
	Cadenillas, A., Choulli, T., Taksar, M. \& Zhang, L. (2006). Classical and impulse stochastic control for the optimization of the dividend and risk policies of an insurance firm.  \emph{ Math. Finance}, \emph{ 16}(1),181--202.
	
	\bibitem[Dupuis \& Wang(2002)]{DW2002} 
	Dupuis, P., Wang, H. (2002),  Optimal stopping with random intervention times. \emph{  Adv. Applied Prob.}, \emph{ 34},  141--157.
	
	\bibitem[Guo \& Liu(2005)]{GL2005} 
	Guo, X., Liu, J. (2005).  Optimal stopping at the maximum of Geometric Brownian motion when signals are received, \emph{ J. Appl. Prob.}  \emph{ 42},  826--838.
	
	\bibitem[Guohui et al.(2024)]{GLZYL2024}
	Guohui, G., Lin, H., Zongxia, L., Yang, L., \& Litian, Z. (2024). Robust dividend, financing, and reinsurance strategies under model uncertainty with proportional transaction costs. \emph{N. Am. Actuar. J.}, \emph{28}, 261--284.
	
	\bibitem[Hern\'andez et al.(2019)]{HMP2019}
	Hern\'andez-Hern\'andez D., Moreno-Franco H. A., \& P\'erez J.L (2019). Periodic strategies in optimal execution with multiplicative price impact. \emph{Mathematical Finance}, \emph{29}, 1039--1065.
	
	\bibitem[H\o jgaard \& Taksar(1999)]{HT1999}
	H\o jgaard, B. \&Taksar, M. (1999). Controlling risk exposure and dividends payout schemes: insurance company example. \emph{Math. Finance}, \emph{9}(2), 153--182.
	
	\bibitem[Hubalek \& Schachermayer(2004)]{HS2004}
	Hubalek, F. \& Schachermayer, W. (2004). Optimizing expected utility of dividend payments for a Brownian risk process and a peculiar nonlinear ODE. \emph{Insurance Math. Econom.}, \emph{34}(2), 193--225.
	
	\bibitem[Iglehart(1969)]{I1969}
	Iglehart, D. L. (1969). Diffusion approximations in collective risk theory. \emph{J. Appl. Probability}, \emph{6}, 285--292.
	
	\bibitem[Karatzas \& Shreve(1991)]{KS1991}
	Karatzas, I. \& Shreve, S.E. (1991). \emph{Brownian Motion and Stochastic Calculus}, 2nd edn. Springer, New York.
	
	\bibitem[Kulinich \& Kushnirenko(2014)]{KK2014}
	Kulinich, G. \& Kushnirenko, S. (2014). Strong uniqueness of solutions of stochastic differential equations with jumps and non-Lipschitz random coefficients. \emph{Mod. Stoch. Theory Appl.}, \emph{1}, 65--72.
	
	
	\bibitem[Lempa(2010)]{L2010}
	{Lempa, J.} Irreversible investment decisions under return and time uncertainty: optimal timing with a Poisson clock.  Preprint series. {\it University of Oslo, Pure Mathematics},  \emph{23}, ISSN 0806–2439.	
	
	\bibitem[Luo et al.(2023)]{LZZC2023}
	Luo, X., Zhu, Q., Zhang, Y., \& Chen, P. (2023). Optimal impulse dividend and capital injection model with proportional and fixed transaction costs. \emph{Math. Methods Appl. Sci.}, \emph{46}(5), 4942--4964.
	
	\bibitem[Noba et al.(2018)]{NPYY2018}
	Noba, K., P\'{e}rez, J. L., Yamazaki, K., \& Yano, K. (2018). On optimal periodic dividend strategies for Lévy risk processes. \emph{Insurance Math. Econom.}, \emph{80}, 29--44.
	
	\bibitem[Protter(2005)]{Pro}
	Protter, P.E. (2005). \emph{Stochastic integration and differential equations}. Second edition. Version 2.1. Corrected third printing. Stochastic Modelling and Applied Probability, 21. Springer-Verlag, Berlin.
	
	\bibitem[Radner \& Shepp(1996)]{RS1996}
	Radner, R. \& Shepp, L. (1996). Risk vs. Profit Potential: A Model for Corporate Strategy. \emph{J. Econ. Dynam. Control}, \emph{20}, 1373--1393.
	
	\bibitem[Taksar(2000)]{T2000}
	Taksar, M. (2000). Optimal risk and dividend distribution control models for an insurance company. \emph{Math. Methods Oper. Res.}, \emph{51}(1), 1--42.
	
	
	
\end{thebibliography}
\end{document}